\newtheorem{thm}[equation]{Theorem}
\let\c@subsection\c@equation
\newtheorem{prop}[equation]{Proposition}
\newtheorem{lem}[equation]{Lemma}
\newtheorem{cor}[equation]{Corollary}
\theoremstyle{definition}
\newtheorem{defn}[equation]{Definition}
\theoremstyle{remark}
\newtheorem{remk}[equation]{Remark}
\newtheorem{remks}[equation]{Remarks}
\newtheorem{exm}[equation]{Example}
\newtheorem{exms}[equation]{Examples}
\newtheorem{notat}[equation]{Notation}
\numberwithin{equation}{section}
\newcommand{\thmref}{Theorem~\ref}
\newcommand{\propref}{Proposition~\ref}
\newcommand{\corref}{Corollary~\ref}
\newcommand{\lemref}{Lemma~\ref}
\newcommand{\sC}{{\mathcal C}}
\newcommand{\sF}{{\mathcal F}}
\newcommand{\sH}{{\mathcal H}}
\newcommand{\sK}{{\mathcal K}}
\newcommand{\sL}{{\mathcal L}}
\newcommand{\sM}{{\mathcal M}}
\newcommand{\sO}{{\mathcal O}}
\newcommand{\sR}{{\mathcal R}}
\newcommand{\sS}{{\mathcal S}}
\newcommand{\sZ}{{\mathcal Z}}
\newcommand{\A}{{\mathbb A}}
\renewcommand{\H}{{\mathbb H}}
\renewcommand{\P}{{\mathbb P}}
\newcommand{\Q}{{\mathbb Q}}
\newcommand{\Z}{{\mathbb Z}}
\newcommand{\KGL}{{\rm KGL}}
\newcommand{\CH}{{\rm CH}}
\newcommand{\DM}{{\rm DM}}
\newcommand{\DML}{{{\rm DM}(k, \Lambda)}}
\newcommand{\surj}{\twoheadrightarrow}
\newcommand{\inj}{\hookrightarrow}
\newcommand{\Pic}{{\rm Pic}}
\newcommand{\Hom}{{\rm Hom}}
\newcommand{\Spec}{{\rm Spec \,}}
\newcommand{\divf}{{\rm div}}
\newcommand{\Sch}{{\operatorname{\mathbf{Sch}}}}
\newcommand{\Sm}{{\mathbf{Sm}}}
\newcommand{\ds}{{/\kern-3pt/}}
\newcommand{\un}{\underline}
\newcommand{\ov}{\overline}
\newcommand{\dgn}{{\operatorname{degn}}}
\renewcommand{\dim}{\text{\rm dim}}
\newcommand{\tuborg}{\left\{\begin{array}{ll}}
\newcommand{\sluttuborg}{\end{array}\right.}
\newcommand{\zar}{{\rm zar}}
\newcommand{\nis}{{\rm nis}}
\newcommand{\wt}{\widetilde}
\newcommand{\fnteff}{\mathrm{eff}}
\newcommand{\unsmot}{\mathcal M}
\newcommand{\unsmotcdh}{\mathcal M _{cdh}}
\newcommand{\Tspectra}{Spt(\unsmot)}
\newcommand{\TspectraX}{Spt(\unsmot_{X})}
\newcommand{\Tspectracdh}{Spt(\unsmotcdh)}
\newcommand{\susp}[1]{\Sigma ^{#1}}
\newcommand{\stablehomotopy}{\mathcal{SH}}
\newcommand{\stablehomotopycdh}{\mathcal{SH}_{cdh}}
\newcounter{elno}
\newcounter{elno-abc}   
\newcounter{elno-abc-prime}
\begin{document}
\title{Motivic spectral sequence for relative homotopy $K$-theory}
\author{Amalendu Krishna, Pablo Pelaez}
\address{School of Mathematics, Tata Institute of Fundamental Research,  
1 Homi Bhabha Road, Colaba, Mumbai, India}
\email{amal@math.tifr.res.in}
\address{Instituto de Matem\'aticas, Ciudad Universitaria, 
UNAM, DF 04510, M\'exico.}
\email{pablo.pelaez@im.unam.mx}


\keywords{Relative $K$-theory, cycles with modulus, 
cycles on singular schemes, motivic spectral sequence}

\subjclass[2010]{Primary 14C25; Secondary 14F42, 19E15}

\maketitle

\begin{quote}\emph{Abstract.}  
We construct a motivic spectral sequence for the relative homotopy 
invariant $K$-theory of a closed immersion of schemes $D \subset X$.
The $E_2$-terms of this spectral sequence are the $cdh$-hypercohomology
of a complex of equidimensional cycles.

Using this spectral sequence, we obtain a cycle class map from
the relative motivic cohomology group of 0-cycles to the relative
homotopy invariant $K$-theory.
For a smooth scheme $X$ and a divisor $D \subset X$,
we construct a canonical homomorphism from the Chow groups with modulus 
$\CH^i(X|D)$ to the relative motivic cohomology
groups $H^{2i}(X|D, \Z(i))$ appearing in the above spectral sequence.
This map is shown to be an isomorphism when $X$ is affine and 
$i = \dim(X)$.
\end{quote}
\setcounter{tocdepth}{1}
\tableofcontents  


\section{Introduction}\label{sec:Intro}
In this text, we construct an Atiyah-Hirzebruch type spectral sequence for
the relative algebraic $K$-theory of a closed immersion of
smooth schemes, and relate the $E_2$-terms of this spectral sequence
with the known Chow groups with modulus in various cases.
This section provides the background of the problem, a summary of
main results, their statements and outline of proofs.

\subsection{The background}\label{sec:Background}
Since the advent of higher algebraic $K$-theory of rings and schemes by
Quillen, the goal has been to search for tools for
computing these higher $K$-groups. In his seminal work \cite{Bloch},
Bloch introduced the theory of higher Chow groups of schemes.
He showed that these higher Chow groups rationally coincide with the
algebraic $K$-groups of schemes. It was later shown by Bloch-Lichtenbaum 
\cite{BL}, Friedlander-Suslin \cite{FS} and Levine \cite{Levine-3} that there exists an
Atiyah-Hirzebruch type spectral sequence whose $E_2$-terms are Bloch's
higher Chow groups and which abuts to the algebraic $K$-theory of a 
smooth scheme. 

After the introduction of motivic homotopy theory by Voevodsky and his
coauthors, it was subsequently observed by Voevodsky that the motivic 
$T$-spectra in the motivic stable homotopy category over a field
(e.g., the algebraic $K$-theory spectrum $\KGL$) could be understood well via
their slice filtration.  Voevodsky \cite[Chapter~19]{MVW} also showed that    
the motivic cohomology groups appearing in the spectral sequence resulting
from the slice filtration for the algebraic $K$-theory spectrum coincide with 
Bloch's higher Chow groups. Since then, Voevodsky's slice filtration has
become a very powerful tool to compute algebraic $K$-theory of 
smooth schemes.

One immediate question that arises out of the works of Friedlander-Suslin and 
Voevodsky is if it is possible to construct a convergent spectral sequence 
which computes the relative algebraic $K$-theory $K(X,D)$ for a closed 
immersion of
smooth schemes $D \subset X$, and which reduces to the earlier spectral
sequence when $D= \emptyset$.  The first problem that one faces in order to
answer this question is to define a relative motivic cohomology theory which
would constitute the $E_2$-terms of such a spectral sequence.
Based on the theories of additive higher Chow groups by  
Bloch and Esnault \cite{BE} and Chow groups with modulus by Kerz and Saito
\cite{KS}, a theory of higher Chow groups with modulus was 
introduced by Binda and Saito in \cite{BS17}. It is expected that
these higher Chow groups with modulus (or some refined version of them)
constitute the $E_2$-terms of a spectral
sequence which would converge to the relative algebraic $K$-theory
$K(X,D)$ whenever $D$ is an effective Cartier divisor in a smooth scheme $X$
over a field. 


\subsection{Summary of main results}\label{sec:Main-R}
Despite the introduction of higher 
Chow groups with modulus, connecting these groups
to relative algebraic $K$-theory, and in particular,
constructing the desired spectral sequence, remains 
one of the challenging current problems in algebraic $K$-theory.
This paper is an attempt in this direction. Here, we do not
construct a spectral sequence whose $E_2$-terms are the
higher Chow groups with modulus. What we do instead is to expand and feed
the machinery of Voevodsky's slice filtration  into the setting of relative
$K$-theory. 
What results is a strongly convergent spectral sequence abutting to the 
relative
algebraic $K$-theory of a closed immersion of smooth schemes $D \subset X$.
More generally, we show that such a spectral sequence exists for the
relative homotopy invariant $K$-theory $KH(X,D)$ for any
closed immersion of schemes $D \subset X$.  

Given a closed immersion of a divisor $D$ inside a smooth scheme $X$,
we show that the $E_2$-terms of our spectral sequence can be described as
the $cdh$-hypercohomology of a subcomplex of the 
complex of equidimensional cycles of Friedlander-Suslin-Voevodsky     
on a scheme $S_X$. This scheme is obtained from gluing two copies of $X$ 
along $D$. If $X$ is projective, these $E_2$-terms are shown to coincide
with the motivic cohomology with compact support \cite{FV} 
of the complement of $D$ in $X$.

Another aspect of our spectral sequence is its degeneration with rational 
coefficients. An important consequence of this is that it allows us  
to provide a complete description of the rational relative $K$-theory 
of projective schemes in 
terms of the motivic cohomology with compact support. 
Furthermore, it allows us to prove the Grothendieck Riemann-Roch theorem
for relative $K$-theory of smooth schemes (see \thmref{thm:sm-pair}).
Prior to this work, it
was not known if the relative $K$-theory of a smooth pair of projective
schemes could be described in terms of motivic cohomology with compact
support.

Having this spectral sequence in hand, what remains to connect
Chow groups with modulus with relative $K$-theory of a smooth
divisor $D$ inside a smooth scheme $X$ is to show the agreement between
the $E_2$-terms of our spectral sequence and the Chow groups with modulus.
In an attempt in this direction, we construct a canonical homomorphism
from the Chow groups with modulus to the $E_2$-terms of our
spectral sequence. We then show that for 0-cycles, this
map is in fact an isomorphism when $X$ is affine. 
This provides some evidence that
the spectral sequence constructed in this paper might be the answer 
to the question of relating Chow groups with modulus with relative
algebraic $K$-theory of smooth pairs.

\subsection{Statements of main results}\label{sec:Main-R*}
The results we prove can be
summarized as follows. The exact hypothesis of each statement,  
notations and terms used in these results will be explained and made precise 
at appropriate places in this text. 

Let $k$ be a perfect field and let 
$\Lambda$ denote the ring $\Z$ if $k$ admits resolution of singularities or, 
$\Z[\tfrac{1}{p}]$ if $k$ has characteristic $p > 0$. 
For an abelian group $A$, let $A_{\Lambda} = A \otimes_{\Z} \Lambda$.
Given a morphism of schemes $f: D \to X$ over $k$, let $KH(X,D)$ denote the
homotopy fiber of the map of Weibel's homotopy $K$-theory spectra
$f^*: KH(X) \to KH(D)$. Note that $KH(X,D)$ coincides with the relative
algebraic $K$-theory spectrum $K(X,D)$ if $X$ and $D$ are smooth.

Recall from \cite[Lecture~16]{MVW} that the presheaf of abelian groups
$z_{equi}(\A^q_k,0)$ on the category  of smooth schemes over $k$
is defined by letting $z_{equi}(\A^q_k,0)(U)$ be the free abelian 
group generated by the closed and irreducible subschemes 
$Z \subsetneq U \times \A^q_k$ which are dominant and
equidimensional of relative dimension zero over a component of 
$U$.
Let $C_{\ast}z_{equi}(\A^q_k,0)$ denote the chain complex of
presheaves of abelian groups associated, via the Dold-Kan correspondence, to
the simplicial presheaf given by $C_nz_{equi}(\A^q_k,0)(U) 
= z_{equi}(\A^q_k,0)(U \times \Delta^n_k)$. 
Given a smooth scheme $X$ over $k$ and an effective Cartier divisor 
$D \subset X$, let $S_X$ denote the scheme obtained by gluing
two copies of $X$ along $D$ and let $\nabla: S_X \to X$ be the fold
map. We let $\Lambda_{X|D}(q)[2q]$ denote the complex of
sheaves on the $cdh$-site of $X$ given by
$\Lambda_{X|D}(q)[2q] = 
{\rm Ker}((\nabla_*(C_{\ast}z_{equi}(\A^q_k,0)|_{(S_X)_{cdh}}))
\to C_{\ast}z_{equi}(\A^q_k,0)|_{X_{cdh}})$. 

The relative motivic cohomology $H^{a}(X|D, \Lambda(b))$ where
$D\subset X$ is a closed subscheme, is defined in the motivic
stable homotopy category in terms of maps from the mapping cone
of $D\rightarrow X$ into the Eilenberg-MacLane $H\Lambda$
spectrum representing motivic cohomology,
see \ref{sec:RMC**}, \ref{prop:DMrep}, \ref{prop:rel-mot-coh-equid}.

\begin{thm}\label{thm:MT-1}
Let $X$ be a separated scheme of finite type over $k$ and let 
$D \subset X$ be a closed subscheme. Then the following hold.
\begin{enumerate}
\item
There
exists a strongly convergent spectral sequence
\[
E^{a,b}_2 = H^{a-b}(X|D, \Lambda(-b))
\Rightarrow KH_{-a-b}(X,D)_{\Lambda}.
\]
This spectral sequence degenerates with rational coefficients.
\item
The spectral sequence exists with integral coefficients if $X$ 
and $Y$ are regular.
\item
If $X$ is regular and $D \subset X$ is a Cartier divisor, then
$H^{a}(X|D, \Lambda(b)) \cong \H^{a-2b}_{cdh}(X, \Lambda_{X|D}(b)[2b])$.
\item
If $X$ is projective over $k$, then 
$H^{a}(X|D, \Lambda(b)) = H^a_c(X\setminus D, \Lambda(b))$ is the 
Friedlander-Voevodsky
motivic cohomology with compact support of $X \setminus D$.
\end{enumerate}    
\end{thm}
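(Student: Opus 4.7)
The plan is to derive all four statements from Voevodsky's slice filtration applied to the $K$-theory spectrum $\KGL_{\Lambda}$ in $\SH(k)$, evaluated on the mapping cone $X_+/D_+$ of $D_+ \to X_+$. First I would identify $KH(X,D)_{\Lambda}$ with the $\KGL_{\Lambda}$-cohomology of $X_+/D_+$ in $\SH(k)$; this uses Cisinski's representability of homotopy $K$-theory by $\KGL$ on all finite type schemes, together with the cofiber sequence $D_+ \to X_+ \to X_+/D_+$, which exchanges the fiber defining $KH(X,D)$ for $\KGL$-cohomology of the cone. The slice tower $\cdots \to f_{q+1}\KGL \to f_q \KGL \to \cdots$ then yields a spectral sequence whose $E_1$-page is computed by the slices $s_q \KGL$. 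Voevodsky's theorem, available in the generality hypothesized on $\Lambda$ (integrally when $k$ admits resolution of singularities, and with $p$ inverted in characteristic $p$ via Levine's extension), identifies $s_q \KGL_{\Lambda}$ with $\Sigma^{2q,q} H\Lambda$, producing $E_2^{a,b} = H^{a-b}(X|D, \Lambda(-b))$ with the convergence indexing of the statement.

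Strong convergence on the compact object $X_+/D_+$ follows from the known convergence properties of the slice tower of $\KGL$ on compact objects in $\SH(k)$. Part (2) is immediate from this together with $KH = K$ on regular schemes. For the rational degeneration, I would use the Adams operations on $\KGL_{\Q}$: these split $\KGL_{\Q}$ into Tate-twisted Eilenberg-MacLane eigenspectra $\bigoplus_q \Sigma^{2q,q} H\Q$, so the slice filtration splits rationally and all differentials beyond $E_2$ vanish.

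For part (3), the task is to translate $H^a(X|D, \Lambda(b))$, a priori a $\SH(k)$-invariant, into an explicit cdh hypercohomology. I would use the comparison between $H\Lambda$-cohomology in $\SH(k)$ and Voevodsky's motivic cohomology in $\DM(k, \Lambda)$ (Proposition~\ref{prop:DMrep}), together with the Friedlander-Suslin-Voevodsky theorem that on the cdh site motivic cohomology of a finite type scheme is computed by the equidimensional cycles complex $C_* z_{equi}(\A^b_k, 0)$. The cone $X_+/D_+$ is presented via the gluing $S_X = X \sqcup_D X$: the two closed inclusions of $X$ into $S_X$ produce sections of the fold map $\nabla$, so the fiber of $\nabla_*$ on cycle complexes naturally encodes the relative motive, and cdh hypercohomology of the kernel complex $\Lambda_{X|D}(b)[2b]$ recovers $H^a(X|D, \Lambda(b))$ via Proposition~\ref{prop:rel-mot-coh-equid}.

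Part (4) then follows by combining (3) with the Friedlander-Voevodsky identification of motivic cohomology with compact support of an open subscheme of a projective variety by cdh hypercohomology of the equidimensional cycles complex. When $X$ is projective, $X_+/D_+$ models the motive with compact support of $U = X \setminus D$, and the cdh description from (3) matches $H^a_c(U, \Lambda(b))$. The main obstacle will be the cycle-theoretic identification in (3): one must match the $\SH(k)$-level mapping cone precisely with the kernel complex on $S_X$, ensure that cdh descent survives passage to the kernel of $\nabla_*$, and track the shifts between Nisnevich and cdh versions of motivic cohomology. This technical core depends on cdh descent for $C_* z_{equi}(\A^b_k, 0)$ and on the representability results from Section~\ref{sec:RMC**}.
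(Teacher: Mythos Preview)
Your approach matches the paper's: the slice tower of $\KGL_k$ in $\SH$ pushed to $\SH_{cdh}$ via $\mathbf L\pi^*$, Cisinski's representability of $KH$ by $\KGL$ on $\Sch_k$, Levine's identification $s_q\KGL_k \cong \Sigma^q_T H\Z$ (extended to $\Lambda$-coefficients in positive characteristic via \cite{KP}), Riou's Chern-character splitting for rational degeneration, the double $S_X$ for part (3), and the compact-support motive for part (4).

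Two points to correct. First, do not route (4) through (3). Part (3) assumes $X$ regular and $D$ a Cartier divisor, whereas (4) is stated for arbitrary projective $X$ and an arbitrary closed subscheme $D$. The paper proves (4) independently (Proposition~\ref{rep.motcohcs}): for $X$ proper one has $M(X) = M^c(X)$ and $M(D) = M^c(D)$, so comparing the triangle $M(D) \to M(X) \to tr(\Sigma^\infty_T M_f)$ with the localization triangle $M^c(D) \to M^c(X) \to M^c(X\setminus D)$ forces $tr(\Sigma^\infty_T M_f) \cong M^c(X\setminus D)$ in $\DM_{cdh}$. You do state this identification, but by phrasing (4) as ``the cdh description from (3) matches $H^a_c$'' you make your argument valid only under the extra hypotheses of (3). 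Second, strong convergence in the paper is not argued via compactness of $M_f$; it comes from the fact that $\KGL[\tfrac{1}{p}]$ is \emph{bounded} with respect to the slice filtration on every $Y \in \Sch_k$ (shown in \cite{KP}), and this vanishing is then passed to $M_f$ through the cofiber sequence $\Sigma^\infty_T Y_+ \to \Sigma^\infty_T X_+ \to \Sigma^\infty_T M_f$. Since $X$ and $D$ may be singular, appealing to compactness in $\SH(k)$ is not the right framing.
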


\begin{thm}\label{thm:MT-2}
If $X$ has dimension $d$, then:
\begin{enumerate}
\item
There exists a cycle class map
\[
cyc_i: H^{2d+i}(X|D, \Lambda(d+i)) \to KH_{i}(X,D)_{\Lambda}.
\]
\item
If $k$ admits resolution of singularities, $X$ is regular and $D \subset X$ is 
an effective Cartier divisor, there exist Chern class maps
\[
c_{X|D, a,b}: KH_a(X,D)_{\Lambda} \to H^{2b-a}(X|D, \Lambda(q))
\]
which are functorial in the pair $(X,D)$.
\end{enumerate}
\end{thm}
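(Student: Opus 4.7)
The plan is to realize $\mathrm{cyc}_i$ as an edge map in the strongly convergent spectral sequence of \thmref{thm:MT-1}(1). Indexing so that $a+b=-i$ is the relevant total degree, the choice $a=d$, $b=-d-i$ produces $E_2^{d,-d-i}=H^{2d+i}(X|D,\Lambda(d+i))$. The key input is the dimensional vanishing
\[
E_2^{a,b}=H^{a-b}(X|D,\Lambda(-b))=0 \quad \text{whenever } a>d,
\]
which I expect to follow from the description of the $E_2$-terms via the Friedlander--Suslin--Voevodsky cycle complex on $S_X$ (a scheme of the same dimension $d$ as $X$) together with the standard dimensional vanishing for equidimensional motivic cohomology. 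Granting this, for every $r\geq 2$ the differential out of $E_r^{d,-d-i}$ lands in $E_r^{d+r,-d-i-r+1}$, which is a subquotient of $E_2^{d+r,\bullet}=0$. Hence $E_2^{d,-d-i}\twoheadrightarrow E_\infty^{d,-d-i}$, and since no filtration piece beyond $a=d$ contributes to total degree $-i$, the $E_\infty$-term coincides with the bottom layer $F^d KH_i(X,D)_\Lambda$ of the convergent filtration, which injects into $KH_i(X,D)_\Lambda$. The composite is the cycle class map.

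\textbf{Part (2): Chern classes.} The plan is to build relative Chern classes from absolute ones via a homotopy fiber square of spectra. Under resolution of singularities, $KH$ agrees with $K$ on regular schemes, and for regular $Y$ there are natural Chern class maps $c_{a,b}\colon KH_a(Y)_\Lambda\to H^{2b-a}(Y,\Lambda(b))$ in the spirit of Gillet--Soul\'e, which can be realized motivically as maps of spectra $\KGL\to\Sigma^{2b,b}H\Lambda$ arising from the slice tower of $\KGL$. Naturality with respect to $f\colon D\to X$ yields a commutative square of spectra whose horizontal homotopy fibers are, respectively, the relative homotopy $K$-theory spectrum $KH(X,D)$ and a spectrum representing the relative motivic cohomology $H^{*}(X|D,\Lambda(\bullet))$. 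The map induced on homotopy fibers is the required $c_{X|D,a,b}$; functoriality in the pair $(X,D)$ is inherited from the naturality of each side of the square.

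\textbf{Main obstacle.} For (1), the delicate point is pinning down the vanishing $E_2^{a,b}=0$ for $a>d$ in the requisite generality (particularly when $D$ is allowed to have components of dimension $d$); I expect this is cleanest to argue directly at the level of the slice tower rather than on the $E_2$-page. For (2), the main subtlety is ensuring that the Chern classes descend through the $cdh$-localization so that they actually land in the motivic cohomology identified with $H^{*}(X|D,\Lambda(*))$ by \thmref{thm:MT-1}(3)--(4); this relies on the $cdh$-descent properties of both theories, which are available under resolution of singularities.
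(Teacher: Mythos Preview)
For Part~(1), your edge-map argument is correct and yields the same cycle class map as the paper, though the paper routes it through the connective cover $\KGL^0=f_0\KGL$ rather than directly through the $\KGL$-spectral sequence: one shows that the map $CKH^{2d+i,d+i}(M_f)_\Lambda \to H^{2d+i}(M_f,\Lambda(d+i))$ induced by $\KGL^0\to s_0\KGL\cong H\Z$ is an \emph{isomorphism} (this is where the vanishing enters), and then composes its inverse with the canonical $CKH\to KH$. The required vanishing $H^{a-b}(X|D,\Lambda(-b))=0$ for $a>d$ follows from the long exact sequence once one has $H^{2a-b}(Y,\Lambda(a))=0$ for $a>\dim Y+b$; the paper proves the latter (\lemref{lem:vanish-coh}) by induction on dimension via $cdh$-excision and, in positive characteristic, Gabber--Temkin alterations plus a trace argument---it is not merely a dimension count on $S_X$, and in particular the case $\dim D=d$ is excluded in the paper's \thmref{thm:Conn-KGL} exactly as you anticipate.

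For Part~(2) there is a genuine gap. Individual Chern classes $c_b$ are not additive on short exact sequences and are therefore \emph{not} induced by maps of spectra $\KGL\to\Sigma^{2b,b}H\Lambda$; the slice tower does not produce such maps either (it produces maps \emph{from} $f_b\KGL$, not from $\KGL$). Consequently one cannot simply pass to homotopy fibers of a commutative square of spectra. The paper's construction (\thmref{thm:CCR}) instead uses the doubling trick: by $cdh$-descent the pair $(X,D)$ is replaced by $(S_X,X_-)$, and the point is that the inclusion $\iota_-\colon X_-\hookrightarrow S_X$ is \emph{split} by the fold map $\nabla\colon S_X\to X$. Absolute Chern classes $C_b\colon KH_a(S_X)\to H^{2b-a}(S_X,\Z(b))$ for the singular scheme $S_X$ are available from \cite{KP}, the key new input being \cite{KST} which identifies $KH$ with the $cdh$-sheafified $K$-theory under resolution of singularities. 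These Chern classes are natural for pullback, so the square with $\iota_-^*$ commutes; the splitting then guarantees that $C_b$ carries the direct summand $KH_a(S_X,X_-)\subset KH_a(S_X)$ into $H^{2b-a}(S_X|X_-,\Z(b))\subset H^{2b-a}(S_X,\Z(b))$, which is the desired relative Chern class. Without the splitting supplied by the double, naturality alone would not suffice.
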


\begin{thm}\label{thm:MT-3}
If $X$ is regular, $D \subset X$  an effective Cartier divisor and
$i \ge 0$ an integer, there exists a homomorphism 
\[
\lambda_{X|D}: \CH^i(X|D)_{\Lambda} \to H^{2i}(X|D, \Lambda(i)).
\]

If $k$ is furthermore algebraically closed,
$X$ is affine of dimension $d$ and 
$D$ is regular, then there is an isomorphism
$\lambda_{X|D}: \CH^d(X|D) \xrightarrow{\cong} H^{2d}(X|D, \Z(d))$.
\end{thm}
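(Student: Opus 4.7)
The plan is to construct $\lambda_{X|D}$ via a double-space construction applied to the Friedlander--Suslin--Voevodsky cycle class map, and then to prove the isomorphism under the extra hypotheses via a five-lemma argument in which the kernels of the forgetful maps to the absolute groups are matched through relative Milnor $K$-theory.

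To build $\lambda_{X|D}$, invoke Theorem \ref{thm:MT-1}(3) to identify $H^{2i}(X|D, \Lambda(i)) \cong \H^0_{cdh}(X, \Lambda_{X|D}(i)[2i])$, which is the zeroth cohomology of the kernel subcomplex of $\nabla_\ast C_\ast z_{equi}(\A^i,0)|_{(S_X)_{cdh}} \to C_\ast z_{equi}(\A^i,0)|_{X_{cdh}}$. A generator of $\CH^i(X|D)_{\Lambda}$ is an integral codimension-$i$ subscheme $Z \subset X$ admissible with modulus $D$, so in particular $Z \cap D = \emptyset$. The Friedlander--Suslin--Voevodsky machine (see \cite[Lecture~17]{MVW}) attaches to $Z$ a canonical element $\psi(Z) \in z_{equi}(\A^i,0)(X)$ representing its cycle class in $H^{2i}(X, \Lambda(i))$. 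Send $Z$ to
\[
\widetilde{\psi}(Z) := (i_1 \times \id)_\ast\psi(Z) - (i_2 \times \id)_\ast\psi(Z),
\]
where $i_1, i_2\colon X \hookrightarrow S_X$ are the two tautological inclusions into the double. Because $Z \cap D = \emptyset$, the two summands are supported in the disjoint open subschemes $i_1(X \setminus D), i_2(X \setminus D)$ of $S_X$, so $\widetilde{\psi}(Z)$ is a well-defined equidimensional cycle of relative dimension zero over $S_X$ that is annihilated by the restriction map. A rational equivalence with modulus (a cycle on $X \times \P^1$ satisfying the Kerz--Saito modulus condition) yields by the same construction a chain homotopy in the kernel subcomplex, so $\lambda_{X|D}$ is well-defined.

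For the isomorphism, under the extra hypotheses $k = \bar k$, $X$ smooth affine of dimension $d$, $D$ smooth, and $i = d$, consider the commutative diagram of exact sequences
\[
\begin{CD}
\CH^d(X|D)_0 @>>> \CH^d(X|D) @>>> \CH^d(X) @>>> 0 \\
@V\bar\lambda VV @V\lambda_{X|D}VV @V\cong VV @. \\
K @>>> H^{2d}(X|D,\Z(d)) @>>> H^{2d}(X,\Z(d)) @>>> 0
\end{CD}
\]
where the right vertical is Voevodsky's isomorphism $\CH^d(X) \cong H^{2d}(X,\Z(d))$ for smooth $X$, the bottom row comes from the long exact sequence for the pair $(X,D)$ combined with the vanishing $\CH^d(D) = H^{2d}(D,\Z(d)) = 0$ (since $\dim D = d-1$), and $\CH^d(X|D)_0, K$ denote the respective kernels of the forgetful maps. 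By the five lemma it suffices to show $\bar\lambda$ is an isomorphism; this in turn reduces to matching both kernels with $\coker(\CH^d(X,1) \to \CH^d(D,1))$, or equivalently with a suitable subgroup of $H^{d-1}(D, \sK^M_d)$ via the Nesterenko--Suslin--Totaro theorem and the Gersten resolution.

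The principal obstacle is this cycle-theoretic comparison of kernels. It requires the Kerz--Saito Bloch-formula-with-modulus $\CH^d(X|D) \cong H^d(X, \sK^M_d(X|D))$ for smooth affine varieties (to identify $\CH^d(X|D)_0$ cohomologically), a moving lemma for $0$-cycles with modulus over algebraically closed fields \`a la Krishna--Park (to ensure every class of $H^{2d}(X|D,\Z(d))$ is represented by a $0$-cycle supported on $X \setminus D$, giving surjectivity of $\lambda_{X|D}$), and a direct verification that Kerz--Saito modulus rational equivalence corresponds to the chain-level equivalence in $C_\ast z_{equi}(\A^d, 0)(S_X)$. The hypotheses of algebraic closure and affineness are essential to ensure exactness of the relevant Gersten-type resolutions.
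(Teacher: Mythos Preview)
Your construction of $\lambda_{X|D}$ contains an error. The kernel complex $\Lambda_{X|D}(i)[2i]$ is by definition the kernel of the pullback $\iota_-^*\colon \nabla_*(C_* z_{equi}(\A^i,0)|_{(S_X)_{cdh}}) \to C_* z_{equi}(\A^i,0)|_{X_{cdh}}$ along \emph{one} of the two inclusions of $X$ into the double. Your element
\[
\widetilde{\psi}(Z) = (i_1\times\id)_*\psi(Z) - (i_2\times\id)_*\psi(Z)
\]
does not lie in this kernel: since $Z\cap D=\emptyset$ we indeed have $\iota_-^*(i_1\times\id)_*\psi(Z)=0$, but $\iota_-^*(i_2\times\id)_*\psi(Z)=\psi(Z)$, so $\iota_-^*\widetilde{\psi}(Z)=-\psi(Z)\neq 0$. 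The correct assignment is simply $(i_1\times\id)_*\psi(Z)$; equivalently, one maps $[Z]\in\CH^0(Z)$ through $\Hom(M_Z(U_+),\Lambda(i)[2i])\cong\Hom(M_Z(S_X),\Lambda(i)[2i])\to H^{2i}(S_X,\Lambda(i))$ and checks that the image is killed by $\iota_-^*$ because $M_Z(X_-)=0$. This is how the paper proceeds, and the verification that rational equivalences with modulus are killed uses the homotopy invariance $H^{2i}(\A^1_{S_X},\Lambda(i))\cong H^{2i}(S_X,\Lambda(i))$, not the chain-homotopy argument you sketch.

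Your proof of the isomorphism has a more serious gap. The five-lemma reduction to the kernels is fine, and the identification of the target kernel $K$ with $\coker\bigl(\CH^d(X,1)\to\CH^d(D,1)\bigr)$ is correct. But the matching of $\CH^d(X|D)_0$ with this same cokernel is essentially the content of the theorem you are trying to prove. You invoke a ``Kerz--Saito Bloch-formula-with-modulus'' $\CH^d(X|D)\cong H^d(X,\sK^M_d(X|D))$, but this is not available in the literature for an arbitrary smooth divisor $D$ in a smooth affine $X$; the known cases are tailored to class-field-theoretic situations. Without it, your argument is circular. The paper proceeds along an entirely different route: it passes to the singular double $S_X$, uses the Binda--Krishna split exact sequence $0\to\CH_0(X|D)\to\CH_0(S_X)\to\CH_0(X)\to 0$, and constructs a comparison map $\gamma_{S_X}\colon\CH_0(S_X)\to H^{2d}(S_X,\Z(d))$. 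Surjectivity of $\gamma_{S_X}$ comes from a result of Esnault--Kerz--Wittenberg on simple normal crossing varieties; injectivity is obtained by factoring through $K_0(S_X)\to KH_0(S_X)$, where the cycle class map $\CH_0(S_X)\to K_0(S_X)$ is injective by the affine Roitman torsion theorem of Krishna, and the comparison $K_0(S_X)\cong KH_0(S_X)$ follows from Milnor excision for affine schemes plus a five-lemma against the normalization. None of this touches a relative Milnor $K$-theory Bloch formula.
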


\subsection{Outline of proofs}\label{sec:Outline}
We end this section with a brief outline of our proofs.
The idea of the construction of the spectral sequence for relative
$K$-theory came from our previous work \cite{KP}, where such a 
spectral sequence was constructed for the $KH$-theory of singular schemes.
Extending our techniques, we feed the machinery of the slice filtration into
the relative setting. By mapping the mapping cone of a closed immersion
of schemes into the slice filtration of $\KGL$ and generalizing some
results of \cite{KP} to the relative case, we obtain the desired 
spectral sequence and its rational degeneration. 

In order to get
a tower for the relative $K$-theory spectrum leading to the spectral
sequence, we need to
use \cite{Levine-3} which compares Voevodsky's slice filtration 
with Levine's homotopy coniveau tower. This yields a tower for relative 
$K$-theory whose layers are identified with the relative motivic 
cohomology.

The remaining part of this text is devoted to showing a direct 
relation between the $E_2$-terms of the spectral sequence with the Chow groups 
with modulus. In \S~\ref{sec:Chow-mot-coh}, we construct a homomorphism from 
the higher Chow groups with
modulus to the $E_2$-terms by again using the comparison
between the slice and the homotopy coniveau tower for $K$-theory.
We show in the final section that these maps are isomorphisms 
in the 0-cycle range for affine schemes. 
This critically uses the affine Roitman torsion theorem of \cite{Krishna-2} as
the main input.

\section{Review of motivic spaces and 
algebraic cycles}\label{sec:review}
In this section, we fix our notations and provide a limited recollection of
some definitions and known results related to the stable homotopy category
of smooth schemes over a base scheme. We recall the definitions of 
cycles with modulus on smooth schemes and Levine-Weibel Chow groups of 
singular schemes. This Chow group of singular schemes will play a crucial
role in our comparison between Chow group of 0-cycles with modulus and
relative motivic cohomology.

\subsection{Definitions and Notation}\label{sec:Notn}
We will write $k$ for a perfect field of exponential characteristic $p$ (in
some cases we will assume that the field $k$ admits resolution of 
singularities 
\cite[Definition~3.4]{FV}).  
Let $\Sch_k$ be the category of separated schemes of 
finite type over $k$ and $\Sm_k$ be the full subcategory of $\Sch_k$ 
consisting of
smooth schemes over $k$.  If $X\in \Sch_k$, we will write $\Sm_X$ for the full
subcategory of $\Sch_k$ consisting of smooth schemes over $X$.  
Let $(\Sm_k)_{Nis}$ (resp. $(\Sm_X)_{Nis}$, 
$(\Sch_k)_{cdh}$, $(\Sch_k)_{Nis}$) denote $\Sm_k$ equipped with the Nisnevich
topology (resp. $\Sm_X$ equipped with the Nisnevich topology, $\Sch_k$ equipped
with the $cdh$-topology, $\Sch _k$ equipped with the Nisnevich topology).  
To simplify
the notation we will write $X \times Y$ for $X \times _{\Spec(k)} Y$.

Let $\unsmot$ (resp. $\unsmot _{X}$, $\unsmotcdh$) be the category of pointed 
simplicial presheaves on $\Sm_k$ (resp. $\Sm_X$, $\Sch_k$) equipped with the  
motivic model structure described in \cite{Isak}
considering the Nisnevich topology on $\Sm_k$ (resp. Nisnevich topology on
$\Sm_X$, $cdh$-topology on $\Sch_k$) and the affine line $\A^1_{k}$ as an 
interval. A simplicial presheaf will often be called a {\sl motivic space}.

Let $T$ in $\unsmot$ (resp. $\unsmot _{X}$, $\unsmotcdh$) be the pointed 
simplicial presheaf
represented by $S^{1}_s \wedge S^1_t$, where $S^1_t$ is 
$\A^1_{k} \setminus  \{ 0 \}$ (resp. $\A^1_{X} \setminus \{ 0\}$, 
$\A^1_{k} \setminus \{ 0\}$) pointed by $1$, and $S^{1}_s$ denotes the 
simplicial circle.  
Given an arbitrary integer $r \geq1$, let $S^{r}_s$
(resp. $S^{r}_t$) denote the iterated smash product of $S^{1}_s$ 
(resp. $S^1_t$) 
with $r$-factors: $S^{1}_s\wedge \cdots \wedge S^{1}_s$ 
(resp. $S^1_t \wedge \cdots
\wedge S^1_t$); $S^{0}_{s} = S^{0}_{t}$ will be by definition equal to the 
pointed simplicial presheaf represented by the base scheme $\Spec(k)$ 
(resp. $X$, $\Spec(k)$). 
 
Let $\Tspectra$ (resp. $\TspectraX$, $\Tspectracdh$) denote the category of 
symmetric $T$-spectra on $\unsmot$ (resp. $\unsmot _{X}$, $\unsmotcdh$) 
equipped with 
the motivic model structure defined in \cite[8.7]{Hovey}.
We will write $\stablehomotopy$ (resp. 
$\stablehomotopy _{X}$,
$\stablehomotopycdh$) for the homotopy category of $\Tspectra$ (resp. 
$\TspectraX$, $\Tspectracdh$)
which is a tensor triangulated category.
 For any two integers $m$, $n \in \Z$, let $\Sigma^{m,n}$ denote the
automorphism $\Sigma^{m-n}_s \circ \Sigma^n_t: \stablehomotopy \to
\stablehomotopy$ (this also makes sense in $\stablehomotopy _{X}$ and 
$\stablehomotopycdh$).
We will write $\Sigma _{T}^{n}$ for $\Sigma ^{2n,n}$, and
$E\wedge F$ for the smash product of $E$, $F\in \stablehomotopy$ (resp. 
$\stablehomotopy _{X}$, $\stablehomotopycdh$).

Given a simplicial presheaf $A$, we will write $A_{+}$ for the pointed 
simplicial presheaf
obtained by adding a disjoint base point (isomorphic to the base scheme) to 
$A$.  For any $B \in \unsmot$, let $\Sigma^{\infty}_T(B)$ denote the
object $(B, T \wedge B, \cdots ) \in \Tspectra$.
This functor makes sense for objects in $\unsmotcdh$ and $\unsmot _{X}$
as well.  

If $F:\mathcal A\rightarrow \mathcal B$ is a
functor with right adjoint $G:\mathcal B \rightarrow \mathcal A$, 
we shall say that $(F,G):
\mathcal A \rightarrow \mathcal B$ is an adjunction.  
We will use the following notation in all the categories under 
consideration: $\ast$ 
will denote the terminal object, and $\cong$ will denote that a map 
(resp. functor) is an isomorphism (resp. equivalence of categories).

Throughout this paper, $\Lambda$ will denote the ring $\Z$ if $k$ admits 
resolution of singularities, or the ring $\Z[\tfrac{1}{p}]$ otherwise.
For any abelian group $M$, we shall let 
$M_{\Lambda} = M {\underset{\Z}\otimes} \Lambda$.

\enlargethispage{60pt}
 
\subsection{Some known results in motivic homotopy theory}
\label{ssec:preres}
Let $X\in \Sch_k$ and let $v: X \to \Spec(k)$ denote the structure map.
Let $\pi: (\Sch_k)_{cdh}\rightarrow (\Sm_k)_{Nis}$ be the canonical
continuous map of sites. 
We will write $(\pi ^{\ast}, \pi _{\ast}):\unsmot \rightarrow 
\unsmotcdh$,
$(v ^{\ast}, v _{\ast}):\unsmot \rightarrow \unsmot _{X}$ for the adjunctions 
induced by $\pi$, $v$ respectively. 
We will also consider the morphism of sites 
$\pi _{X}: (\Sch_k)_{cdh}\rightarrow (\Sm_X)_{Nis}$
and the corresponding adjunction $(\pi_{X} ^{\ast}, \pi_{X \ast}):\unsmot _{X} 
\rightarrow \unsmotcdh$. The following result can be found
in \cite[2.4 and 2.9]{KP}.

\begin{prop}\label{prop:commdiag1}
The adjunctions $(\pi ^{\ast}, \pi _{\ast}):\unsmot \rightarrow \unsmotcdh$,
$(v ^{\ast}, v _{\ast}):\unsmot \rightarrow \unsmot _{X}$,
$(\pi_{X} ^{\ast}, \pi_{X \ast}):\unsmot _{X} 
\rightarrow \unsmotcdh$ are Quillen adjunctions.
\end{prop}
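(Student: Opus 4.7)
The plan is to apply the standard criterion for producing a Quillen adjunction between left Bousfield localizations. According to \cite{Isak}, each of $\unsmot$, $\unsmot_X$ and $\unsmotcdh$ is obtained as a left Bousfield localization of a model structure on pointed simplicial presheaves on the underlying site, first with respect to the \v{C}ech nerves of covers for the chosen Grothendieck topology (Nisnevich for the first two, $cdh$ for the third), and then with respect to the set $S_{\A^1}$ of projections $(\A^1 \times U)_+ \to U_+$ indexed by representables $U$. Recall that an adjunction between such Bousfield localizations is Quillen provided it is already Quillen for the underlying (unlocalized) model structure and that the left adjoint carries each of the two classes of maps just described to weak equivalences in the target.

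First I would check the underlying Quillen adjunction. The generating cofibrations and generating acyclic cofibrations of the underlying model structure are built by smashing the standard boundary inclusions $\partial\Delta^n_+ \to \Delta^n_+$ and horn inclusions with representables $U_+$. Each of $\pi^\ast$, $v^\ast$ and $\pi_X^\ast$ is the left Kan extension along the corresponding functor of underlying sites and so, on representables, is given by the inclusion $\Sm_k \hookrightarrow \Sch_k$, by base change $U \mapsto U \times_k X$, or by the composition of the two. Each such site-level functor preserves representability and commutes with finite products, so the generating (acyclic) cofibrations are carried to generating (acyclic) cofibrations of the target, and the left adjoint preserves all cofibrations and acyclic cofibrations.

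Next I would verify compatibility with the two stages of localization. A Nisnevich cover in $\Sm_k$ pulls back to a Nisnevich cover in $\Sm_X$, and every Nisnevich cover in $\Sm_k$ or $\Sm_X$ is in particular a $cdh$-cover in $\Sch_k$; thus each left adjoint sends the \v{C}ech nerve of a cover of the source topology to the \v{C}ech nerve of a cover of the target topology, hence to a weak equivalence after localization. For the $\A^1$-projections, each of the three site-level functors commutes with the formation of $\A^1 \times (-)$ in the appropriate sense, so elements of $S_{\A^1}$ are sent into $S_{\A^1}$ of the target and are therefore weak equivalences after the second localization.

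The main subtlety lies in the first step, namely in verifying that left Kan extension along a continuous morphism of small sites is genuinely left Quillen for the unlocalized pointed simplicial presheaf model structure used by \cite{Isak}. This is essentially formal but requires some careful bookkeeping with basepoints and with the explicit description of the generating cofibrations; once that is in place, compatibility with both localizations is routine. These verifications are carried out in \cite[2.4, 2.9]{KP}, to which I would refer for the details.
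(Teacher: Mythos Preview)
Your proposal is correct and takes essentially the same approach as the paper: the paper simply cites \cite[2.4 and 2.9]{KP} without further comment, while you provide a reasonable sketch of the Bousfield-localization argument and then cite the same reference for the details. The sketch you give is accurate and adds useful context, but there is no substantive difference in strategy.
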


We further conclude from ~\propref{prop:commdiag1}
and \cite[Theorem~9.3]{Hovey} the following:

\begin{prop}\label{prop:tablebasech}
The pairs $(\pi ^{\ast}, \pi _{\ast})$, $(v ^{\ast}, v _{\ast})$ and
$(\pi _{X}^{\ast}, \pi _{X\ast})$ are Quillen adjunctions between stable model 
categories.
\end{prop}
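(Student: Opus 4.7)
The plan is to deduce \propref{prop:tablebasech} from \propref{prop:commdiag1} by a direct application of \cite[Theorem~9.3]{Hovey}, which promotes a Quillen adjunction on a pointed model category $\sC$ to a Quillen adjunction on the category of symmetric $T$-spectra, once a cofibrant object $T\in \sC$ has been chosen compatibly on both sides. The unstable input, namely that $(\pi^{*},\pi_{*})$, $(v^{*},v_{*})$ and $(\pi_{X}^{*},\pi_{X*})$ are Quillen adjunctions between the motivic model categories $\unsmot$, $\unsmot_{X}$ and $\unsmotcdh$, is precisely the content of \propref{prop:commdiag1}.

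To apply Hovey's theorem I would first recall that each of $\unsmot$, $\unsmot_{X}$ and $\unsmotcdh$ is a pointed, left proper, cellular simplicial model category, which is a standard property of the motivic model structures of \cite{Isak}. I would then verify that in each case the left adjoint carries the chosen $T$ to the chosen $T$ up to canonical isomorphism: $\pi^{*}$ and $\pi_{X}^{*}$ send the simplicial presheaf represented by $\A^{1}_{k}\setminus\{0\}$ (resp.\ $\A^{1}_{X}\setminus\{0\}$) to its $cdh$-counterpart, while $v^{*}$ sends $\A^{1}_{k}\setminus\{0\}$ to $\A^{1}_{X}\setminus\{0\}$ by base change along $v:X\to \Spec(k)$. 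After smashing with $S^{1}_{s}$ and keeping the basepoint $1$, this gives isomorphisms of the three $T$-objects. Since each left adjoint is strong symmetric monoidal (coming from a morphism of sites or from base change), this compatibility is automatic, and the hypotheses of \cite[Theorem~9.3]{Hovey} are satisfied, lifting the three adjunctions levelwise to Quillen adjunctions between the corresponding categories of symmetric $T$-spectra.

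Finally, to see that the resulting model categories are stable, I would invoke the fact that by construction of the stable motivic model structure on symmetric $T$-spectra, the functor $\Sigma_{T}=\Sigma^{2,1}$ becomes invertible on the homotopy categories $\stablehomotopy$, $\stablehomotopy_{X}$ and $\stablehomotopycdh$, so these are triangulated and the underlying model categories are stable in the required sense. The only point that demands any real care is the identification of the $T$-objects under the left adjoints; granted this, Hovey's formalism yields the conclusion with no further work, and no new ideas beyond what is already encoded in \propref{prop:commdiag1} are needed.
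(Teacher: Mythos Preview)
Your proposal is correct and follows exactly the same approach as the paper, which simply invokes \propref{prop:commdiag1} together with \cite[Theorem~9.3]{Hovey}. You have merely spelled out the routine verifications (compatibility of the $T$-objects under the left adjoints, stability of the resulting model categories) that the paper leaves implicit.
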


We deduce from ~\propref{prop:tablebasech} that there are pairs of
adjoint functors $(\mathbf L \pi ^{\ast}, \mathbf R \pi _{\ast}):\stablehomotopy 
\rightarrow \stablehomotopycdh$,
$(\mathbf L v^{\ast}, \mathbf R v _{\ast}):\stablehomotopy \rightarrow 
\stablehomotopy _{X}$ and
$(\mathbf L \pi _{X}^{\ast}, \mathbf R \pi _{X\ast}):\stablehomotopy _{X} 
\rightarrow \stablehomotopycdh$ between the various stable homotopy
categories of motivic $T$-spectra.  
We observe that for $a\geq b\geq 0$, the suspension functor $\Sigma^{a,b}$
in $\stablehomotopy$ (resp. $\stablehomotopy _X$, $\stablehomotopycdh$)
is the derived functor of the left Quillen functor 
$E\mapsto S^{a-b}_s\wedge S^b_t\wedge E$
in $\Tspectra$ (resp. $\TspectraX$, $\Tspectracdh$).
Since the functors $\pi ^{\ast}$, $v^{\ast}$, $\pi _{X}^{\ast}$
are simplicial and symmetric monoidal, we deduce that they commute 
with the suspension functors $\Sigma^{m,n}$, i.e.,
for every $m$, $n\in \mathbb Z$:
$ \mathbf L \pi ^{\ast} \circ \Sigma^{m,n} (-) \cong
\Sigma^{m,n} \circ \mathbf L \pi ^{\ast} (-)$,
$\mathbf L v ^{\ast} \circ \Sigma^{m,n} (-) \cong
\Sigma^{m,n} \circ \mathbf L v ^{\ast} (-)$ and
$ \mathbf L \pi ^{\ast}_{X} \circ \Sigma^{m,n} (-) \cong
\Sigma^{m,n} \circ \mathbf L \pi ^{\ast}_{X} (-)$.

\subsection{Higher Chow groups with modulus}\label{sec:HCGM}
For $n \ge 1$, let $\square^n$ denote the scheme 
$\A^n_k \cong (\P^1_k \setminus \{\infty\})^n$.
Let $(y_1, \cdots , y_n)$ denote the coordinate of a point on $\square^n$.
We shall denote the scheme $(\P^1_k)^n$ by $\ov{\square}^n$. 
For $1 \le i \le n$, let
$F^{\infty}_{n,i}$ denote the closed subscheme of $\ov{\square}^n$ given by the 
equation $\{y_i = \infty\}$. We shall denote the divisor 
$\stackrel{n}{\underset{i =1}\sum} F^\infty_{n,i}$ by $F^\infty_n$.

Let $X$ be a smooth quasi-projective scheme of dimension $d \ge 0$ over 
$k$ and let $D \subset X$ be an effective Cartier divisor.
For $r \in \Z$ and $n \ge 0$, let $\un{z}_r(X|D,n)$ be the free abelian group 
on integral closed subschemes $V$ of $X \times \square^n$ of dimension 
$r+n$ satisfying the following conditions.

\noindent $(1)$ (Face condition) For each face $F$ of $\square^n$, 
$V$ intersects  $X \times F$ properly:
\[
\dim_k(V\cap (X\times F)) \le r+\dim_k (F), {\rm and} 
\] 

\noindent $(2)$ (Modulus condition) $V$ is a cycle with modulus $D$ relative
to $F^{\infty}_n$:
\[
\nu^*(D \times \ov{\square}^n) \le \nu^*(X \times F^\infty_n),
\]
where $\ov{V}$ is the closure of $V$ in $X \times \ov{\square}^n$
and $\nu: \ov{V}^N \to \ov{V} \to X \times \ov{\square}^n$ is the
composite map from the normalization of $\ov{V}$. 
We let ${\un{z}_r(X|D, n)_{\dgn}}$ denote the subgroup of
$\un{z}_r(X|D, n)$ generated by cycles which are pull-back of
some cycles under various projections $X \times \square^n \to X \times
\square^{m}$ with $m < n$.

\begin{defn}
The {\em cycle complex with modulus} $(z_r(X|D, \bullet), d)$ of $X$ in 
dimension $r$ and with modulus $D$ is the non-degenerate complex associated to 
the cubical abelian group $\un{n} \mapsto \un{z}_{r}(X|D, n)$, i.e.,
\[
z_r(X|D, n): = \frac{\un{z}_r(X|D, n)}
{\un{z}_r(X|D, n)_{\dgn}}.
\]

The homology $\CH_r(X|D, n): = H_n (z_r(X|D, \bullet))$
is called a \emph{higher Chow group} of $X$ with modulus $D$.
Sometimes, we also write it as the Chow group of the {\sl modulus pair} $(X,D)$.
If $X$ has dimension $d$, we write 
$\CH^r(X|D, n) = \CH_{d-r}(X|D, n)$. We shall often write $\CH^r(X|D,0)$
as $\CH^r(X|D)$. We refer to \cite{KPark} for further details on this
definition. The reader should note that $\CH_r(X|D, n)$ coincides with
the usual higher Chow group of Bloch $\CH_r(X, n)$ if $D = \emptyset$.
\end{defn}

\subsection{Levine-Weibel Chow group of singular schemes}
\label{sec:Chow-sing}
We recall the definition of the cohomological Chow group of 0-cycles for
singular schemes from \cite{BK} and \cite{LW}. 
Let $X$ be a reduced quasi-projective
scheme of dimension $d \ge 1$ over $k$. Let $X_{\rm sing}$ and $X_{\rm reg}$
respectively denote the loci of the singular and the regular points of $X$.
We let $X^N$ denote the normalization of $X$.
Given a nowhere dense closed subscheme $Y \subset X$ such that 
$X_{\rm sing} \subseteq Y$
and no component of $X$ is contained in $Y$, we let $\sZ_0(X,Y)$ denote
the free abelian group on the closed points of $X \setminus Y$.
We write $\sZ_0(X, X_{\rm sing})$ in short as $\sZ_0(X)$.

\begin{defn}\label{defn:0-cycle-S-1}
Let $C$ be a pure dimension one reduced scheme in $\Sch_k$. 
We shall say that a pair $(C, Z)$ is \emph{a good curve
relative to $X$} if there exists a finite morphism $\nu\colon C \to X$
and a  closed proper subscheme $Z \subsetneq C$ such that the following hold.
\begin{enumerate}
\item
No component of $C$ is contained in $Z$.
\item
$\nu^{-1}(X_{\rm sing}) \cup C_{\rm sing}\subseteq Z$.
\item
$\nu$ is local complete intersection at every 
point $x \in C$ such that $\nu(x) \in X_{\rm sing}$. 
\end{enumerate}
\end{defn}

Let $(C, Z)$ be a good curve relative to $X$ and let 
$\{\eta_1, \cdots , \eta_r\}$ be the set of generic points of $C$. 
Let $\sO_{C,Z}$ denote the semilocal ring of $C$ at 
$S = Z \cup \{\eta_1, \cdots , \eta_r\}$.
Let $k(C)$ denote the ring of total
quotients of $C$ and write $\sO_{C,Z}^\times$ for the group of units in 
$\sO_{C,Z}$. Notice that $\sO_{C,Z}$ coincides with $k(C)$ 
if $|Z| = \emptyset$. 
As $C$ is Cohen-Macaulay, $\sO_{C,Z}^\times$  is the subgroup of $k(C)^\times$ 
consisting of those $f$ which are regular and invertible in the local rings 
$\sO_{C,x}$ for every $x\in Z$. 

Given any $f \in \sO^{\times}_{C, Z} \inj k(C)^{\times}$, we denote by  
${\rm div}_C(f)$ (or ${\rm div}(f)$ in short) 
the divisor of zeros and poles of $f$ on $C$, which is defined as follows. If 
$C_1,\ldots, C_r$ are the irreducible components of $C$, 
and $f_i$ is the factor of $f$ in $k(C_i)$, we set 
${\rm div}(f)$ to be the $0$-cycle $\sum_{i=1}^r {\rm div}(f_i)$, where 
${\rm div}(f_i)$ is the usual 
divisor of a rational function on an integral curve in the classical sense. 
As $f$ is an invertible 
regular function on $C$ along $Z$, ${\rm div}(f)\in \sZ_0(C,Z)$.

By definition, given any good curve $(C,Z)$ relative to $X$, we have a 
push-forward map $\sZ_0(C,Z)\xrightarrow{\nu_{*}} \sZ_0(X)$.
We shall write $\sR_0(C, Z, X)$ for the subgroup
of $\sZ_0(X)$ generated by the set 
$\{\nu_*({\rm div}(f))| f \in \sO^{\times}_{C, Z}\}$. 
Let $\sR_0(X)$ denote the subgroup of $\sZ_0(X)$ generated by 
the image of the map $\sR_0(C, Z, X) \to \sZ_0(X)$, where
$(C, Z)$ runs through all good curves relative to $X$.
We let $\CH_0(X) = \frac{\sZ_0(X)}{\sR_0(X)}$.

If we let $\sR^{LW}_0(X)$ denote the subgroup of $\sZ_0(X)$ generated
by the divisors of rational functions on good curves as above, where
we further assume that the map $\nu: C \to X$ is a closed immersion,
then the resulting quotient group ${\sZ_0(X)}/{\sR^{LW}_0(X)}$ is
denoted by $\CH^{LW}_0(X)$. Such curves on $X$ are called the 
{\sl Cartier curves}. There is a canonical surjection
$\CH^{LW}_0(X) \surj \CH_0(X)$. The Chow group $\CH^{LW}_0(X)$ was
discovered by Levine and Weibel \cite{LW} in an attempt to describe the
Grothendieck group of a singular scheme in terms of algebraic cycles.
The modified version $\CH_0(X)$ was introduced in \cite{BK}.


\subsection{The double and its Chow group}
\label{sec:double}
Let $X$ be a smooth quasi-projective scheme of dimension $d$ over $k$
and let $D \subset X$ be an effective Cartier divisor. Recall from 
\cite[\S~2.1]{BK} that the double of $X$ along $D$ is a quasi-projective
scheme $S(X,D) = X \amalg_D X$ so that
\begin{equation}\label{eqn:rel-et-2}
\begin{array}{c}
\xymatrix@R=1pc{
D \ar[r]^-{\iota} \ar[d]_{\iota} & X \ar[d]^{\iota_+} \\
X \ar[r]_-{\iota_-} & S(X,D)}
\end{array}
\end{equation}
is a co-Cartesian square in $\Sch_k$.
In particular, the identity map of $X$ induces a finite map
$\nabla:S(X,D) \to X$ such that $\nabla \circ \iota_\pm = {\rm Id}_X$
and $\pi = \iota_+ \amalg \iota_-: X \amalg X \to S(X,D)$ 
is the normalization map.
We let $X_\pm = \iota_\pm(X) \subset S(X,D)$ denote the two irreducible
components of $S(X,D)$.  We shall often write $S(X,D)$ as $S_X$
when the divisor $D$ is understood. $S_X$ is a reduced quasi-projective
scheme whose singular locus is $D_{\rm red} \subset S_X$. 
It is projective whenever
$X$ is so. It follows from \cite[Lemma~2.2]{Krishna-3} that  
~\eqref{eqn:rel-et-2}
is also a Cartesian square. 

It is clear that the map
$\sZ_0(S_X,D)\xrightarrow{(\iota^*_{+}, \iota^*_{-})} \sZ_0(X_+,D) \oplus 
\sZ_0(X_-,D)$ is an isomorphism. 
Notice also that there are push-forward inclusion maps
${p_{\pm}}_* \colon \sZ_0(X,D) \to \sZ_0(S_X,D)$ such that 
$\iota^*_{+} \circ {p_{+}}_* = {\rm Id}$
and $\iota^*_{+} \circ {p_{-}}_* = 0$.
The fundamental result that connects the 0-cycles with modulus on $X$ and
0-cycles on $S_X$ is the following.

\begin{thm}$($\cite[Theorem~1.12]{BK}$)$\label{thm:BS-main}
Let $X$ be a smooth quasi-projective scheme over $k$
and let $D \subset X$ be an effective Cartier divisor. Then there
is a split short exact sequence
\[
0 \to \CH_0(X|D) \xrightarrow{ {p_{+}}_*} \CH_0(S_X) \xrightarrow{\iota^*_-}
\CH_0(X) \to 0.
\]
\end{thm}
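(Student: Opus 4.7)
The plan is to descend to Chow groups the short exact sequence of cycle groups
\[
0 \to \sZ_0(X,D) \xrightarrow{p_{+*}} \sZ_0(S_X) \xrightarrow{\iota_-^*} \sZ_0(X,D) \to 0,
\]
which is split exact at the cycle level thanks to the decomposition $\sZ_0(S_X)\cong \sZ_0(X_+,D)\oplus \sZ_0(X_-,D)$ recorded above, with cycle-level section $\iota_{-*}$. Exactness and splitting at the Chow level then reduce to showing that each of $p_{+*}$, $\iota_-^*$, and $\iota_{-*}$ descends, together with injectivity of $p_{+*}$ on Chow groups.

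The descent of $\iota_-^*$ and of the section $\iota_{-*}$ is the straightforward direction. Since $X_+$ and $X_-$ are the irreducible components of $S_X$, each irreducible component of any good curve $\nu:(C,Z)\to S_X$ maps entirely into exactly one of $X_\pm$; letting $C_-$ denote the union of the components mapping into $X_-$, the pair $(C_-,Z\cap C_-)$ with structure map $\nu|_{C_-}:C_-\to X$ is a good curve on $X$ (the l.c.i.\ clause over $X_{\rm sing}=\emptyset$ is vacuous), and for $f\in \sO^\times_{C,Z}$ one has $\iota_-^*\bigl(\nu_*\divf(f)\bigr)=(\nu|_{C_-})_*\divf(f|_{C_-})\in \sR_0(X)$. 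Conversely, any good curve $\mu:C\to X$ composes with the closed immersion $\iota_-$ to a good curve on $S_X$, which takes $\sR_0(X)$ into $\sR_0(S_X)$; the l.c.i.\ hypothesis over $(S_X)_{\rm sing}=D$ holds because $\iota_-$ is a regular embedding along the Cartier divisor $D\subset X$.

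The heart of the argument is the descent and injectivity of $p_{+*}$, which I would establish through the \emph{doubling of a modulus curve}. Given $(\nu:C\to X, f)$ representing a modulus relation for $(X,D)$ -- so that, in the formulation of the modulus condition, $1/f$ vanishes along $\nu^{-1}(D)$ to the required order, equivalently $f$ and $1$ agree to that order there -- form the double $\wt C := S(C,\nu^{-1}(D))$ with structure map $\wt\nu=(\nu,\nu):\wt C\to S_X$, and let $\wt f:=(f,1)$ on the two components of $\wt C$. The modulus condition is precisely what makes $\wt f$ a well-defined invertible element of $\sO^\times_{\wt C,\wt Z}$ for any $\wt Z\supseteq \wt\nu^{-1}(D)$, because the two branch values match to the required order at the glued locus. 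Its divisor pushes forward to $p_{+*}\bigl(\nu_*\divf(f)\bigr)\in \sR_0(S_X)$, establishing well-definedness. Exactness at $\CH_0(S_X)$ is then a diagram chase: if $\iota_-^*[\beta]=0$ then $\iota_-^*\beta\in \sR_0(X)$ is trivialized by curves on $X$, and subtracting $\iota_{-*}$ of these relations from $\beta$ yields a representative supported on $X_+$, hence in the image of $p_{+*}$.

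The main obstacle is the injectivity of $p_{+*}$, which is the converse of the doubling: given $p_{+*}(\alpha)\in \sR_0(S_X)$ realized by good curves $\nu_i:(C_i,Z_i)\to S_X$, one must extract from the $X_+$-components of the $C_i$'s a modulus relation for $(X,D)$. The crux is the local analysis at points over $D$: the l.c.i.\ hypothesis on $\nu_i$ at such a nodal point of $S_X$ forces a matching of the two branches of $\nu_i$ through the divisor $D\subset S_X$, and this matching is exactly what produces the modulus inequality on the resulting curve in $X$. Making this precise requires unwinding the local structure of $S_X$ as $X\amalg_D X$ together with the Cartier-divisor formulation of the modulus condition, and is the step where the geometric interplay between the double $S_X$ and the modulus pair $(X,D)$ is essential.
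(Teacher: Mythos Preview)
The paper does not prove this statement at all: it is quoted as \cite[Theorem~1.12]{BK} and used as a black box, so there is no in-paper argument to compare your proposal against.

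On its own merits, your outline follows the strategy of the cited reference, and the ``doubling of a modulus curve'' is indeed the key construction for showing that $p_{+*}$ descends. Two points deserve care. First, you are tacitly using the curves-with-rational-function presentation of $\CH_0(X|D)$ (a curve $\nu:C\to X$ together with $f$ congruent to $1$ along $\nu^{-1}(D)$), whereas the paper defines $\CH_0(X|D)$ via the cubical complex of \S\ref{sec:HCGM}; the passage between the two is a genuine lemma, and your parenthetical ``$1/f$ vanishes \dots\ equivalently $f$ and $1$ agree'' conflates two different normalisations of the modulus condition. Second, and more seriously, your treatment of injectivity is only a restatement of the problem. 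An arbitrary good curve $(C,Z)\to S_X$ need not decompose as $C_+\amalg_E C_-$ with each $C_\pm\subset X_\pm$ meeting $D$ transversally, so the ``local matching at the node forces the modulus inequality'' step does not apply as stated; in \cite{BK} this is handled by first moving the good-curve relation to one supported on curves of a special shape (via a Bertini-type argument), after which the un-doubling goes through. Without that reduction the injectivity argument has a gap.
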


\vskip .2cm

\section{Relative Homotopy Invariant $K$-theory}\label{sec:relKH}
Let $Spt _{S^1}$ be the category of Bousfield-Friedlander $S^1$-spectra 
equipped with the stable model structure \cite[\S~1.2]{Levine-3}, and
$\mathcal H (S^1)$ its homotopy category.
We will write $Spt_{S^1}(\Sch _k)$ for the category of presheaves 
on $\Sch _k$ with values in $Spt _{S^1}$. We let $K \in Spt_{S^1}(\Sch _k)$
denote the Thomason-Trobaugh algebraic $K^B$-theory spectrum, and let
$KH \in Spt_{S^1}(\Sch _k)$ denote Weibel's homotopy invariant
$K$-theory spectrum \cite{Weibel}, \cite[2.8]{Cisinski}.  Let
$K_n(X)$, $n\in \mathbb Z$ be the $n$-th homotopy group of
the spectrum $K(X)$. We use similar notation for the homotopy groups of
$KH(X)$. 

\begin{defn}  \label{def.relKH}
Let $f:Y\rightarrow X$ be a map in $\Sch _k$.
\begin{enumerate}
\item  \label{def.relKH.a} We will write $K(f)$ (resp. $KH(f)$)
for the homotopy fiber
of the map $f^\ast:K(X)\rightarrow K(Y)$ (resp.
$f^\ast:KH(X)\rightarrow KH(Y)$) induced by $f$ in $Spt_{S^1}$.
If $f$ is a closed (resp. open) immersion, we will write $K(X,Y)$ (resp. 
$K^{X\backslash Y}(X)$) for $K(f)$. Analogous notations will be used for
$KH(f)$ too. We will write $K_n(f)$ (resp. $KH_n(f)$)
for the $n$-th homotopy group of the spectrum $K(f)$ (resp. $KH(f)$).

\item \label{def.relKH.b} We will write $M_f$ for the mapping cone of $f$ in 
$\unsmotcdh$.  Namely, let $X_+$, $Y_+$
denote the simplicial presheaves represented by $X$, $Y$ with
a disjoint base point.  We then factor $f$ in $\unsmotcdh$:
\[ \xymatrix@R=1pc{Y_+ \ar[r]^-f \ar[dr]_-{c_f}& X_+\\
	&A_f \ar[u]_-{w_f}}
\]
where $c_f$ (resp. $w_f$) is a cofibration (resp. trivial fibration) in 
$\unsmotcdh$ 
and let $M_f$ be the pushout in $\unsmotcdh$ of the diagram: 
$\ast \leftarrow Y_+ \stackrel{c_f}{\rightarrow} A_f$.
If $f$ is a closed immersion, $M_f$ is canonically 
identified with the quotient $X/Y$ in $\unsmotcdh$.
\end{enumerate}
\end{defn}

Since the cofibrant replacement functor is functorial in $\unsmotcdh$, 
we deduce that every commutative diagram in $\Sch _k$:
\begin{align}  \label{dg.map.pairs}
\begin{array}{c}
\xymatrix@R=1pc{Y' \ar[r]^-{f'} \ar[d]_-{g'}& X' \ar[d]^-{g}\\
	Y \ar[r]^-{f}&X}
\end{array}
\end{align}
induces a commutative diagram in $\unsmotcdh$ which is natural in 
$g$ and $g'$ \eqref{dg.map.pairs}:
\begin{align}  \label{dg.map.cofs}
\begin{array}{c}
\xymatrix@R=1pc{Y'_+\ar[d]_-{g'} \ar[r]^-{c_{f'}}& A_{f'}\ar[r] 
\ar[d]^-{(g,g')_A}& 
M_{f'} \ar[d]^-{(g,g')_M}\\
	Y_+ \ar[r]^-{c_{f}}& A_{f}\ar[r] & M_{f}}
\end{array}
\end{align}
where the rows are cofiber sequences in $\unsmotcdh$.

\subsection{Voevodsky's $\KGL$ spectrum}  \label{ss.KGL}
For any Noetherian separated scheme $X$ of finite Krull dimension, the motivic 
$T$-spectrum $\KGL_X \in Spt({\sM_X})$ was defined by Voevodsky
(see \cite[\S~6.2]{Voev-0}).  It represents
algebraic $K$-theory of objects in $\Sm_X$ if $X$ is regular.
It was later shown by Cisinski \cite{Cisinski} that for $X$ not necessarily 
regular, $\KGL_X$ represents Weibel's homotopy invariant $K$-theory 
$KH_*(Y)$ for $Y \in \Sm_X$.  We will write $\KGL \in \Tspectracdh$ for
$\mathbf L \pi ^{\ast} \KGL _k$.

We will write $Spt_{S^1}(\unsmotcdh )$ for the category of symmetric 
$S^1$-spectra on $\unsmotcdh$ equipped with the 
motivic model structure defined in \cite[8.7]{Hovey},
and $\stablehomotopycdh ^{S^1}$ for its homotopy category which is a
tensor triangulated category.  
We shall denote the homotopy category of $Spt_{S^1}(\unsmot )$ by
$\sS\sH_{nis}^{S^1}$. There
is a Quillen adjunction \cite[\S 2]{Voev-newapp}:
\begin{equation}  \label{eq.s1Tadj}
(\Sigma _t ^{\infty},\Omega _t ^{\infty}):Spt_{S^1}(\unsmotcdh )\rightarrow 
\Tspectracdh.
\end{equation}

Consider the functor of global sections on $X\in \Sch _k$,
$\Gamma _X: Spt_{S^1}(\unsmotcdh ) \rightarrow Spt_{S^1}$, 
which admits a left adjoint $X_+\otimes -: Spt_{S^1}\rightarrow 
Spt_{S^1}(\unsmotcdh )$, 
$(E^0, E^1,\ldots) \mapsto (X_+\wedge E^0, X_+\wedge E^1, \ldots )$.
Since $X_+\otimes -$ is a left Quillen functor, we obtain a Quillen adjunction:
\begin{equation}  \label{eq.globalsec}
(X_+\otimes -,\Gamma _X):Spt_{S^1}\rightarrow Spt_{S^1}(\unsmotcdh ).
\end{equation}

We will write $\mathbf R\Gamma _X: 
\stablehomotopycdh ^{S^1}\rightarrow \mathcal H (S^1)$
(resp. $\mathbf R\Omega _{t}^{\infty}: \stablehomotopycdh \rightarrow 
\stablehomotopycdh ^{S^1}$) 
for the right derived functor of $\Gamma _X$ (resp. $\Omega _{t}^{\infty}$), 
$\mathbf{RHom}$ for the internal 
Hom-functor in $\stablehomotopycdh ^{S^1}$ and $\Sigma _{S^1}^{\infty}:\unsmotcdh
\rightarrow Spt_{S^1}(\unsmotcdh )$, $A\mapsto (A,S^1\wedge A, \ldots)$ for the
infinite suspension functor.  When $X=\Spec(k)$, we will simply write 
$\mathbf R\Gamma _k$.

\begin{remk} \label{rmk.loc-gl}
Since $X_+\otimes -$ is naturally isomorphic to the composition 
$\Sigma _{S^1}^{\infty}X_+\wedge(\Spec(k) _+ \otimes -)$, we deduce that
$\mathbf R\Gamma _X$ is naturally isomorphic to 
$\mathbf R\Gamma _k \circ \mathbf{RHom}(\Sigma^{\infty}_{S^1}X_+,-)$.
\end{remk}

By construction and Definition~\ref{def.relKH}\eqref{def.relKH.b}, there is 
a commutative diagram in $Spt_{S^1}(\unsmotcdh )$:
\[
\xymatrix@R=1pc{\Sigma _{S^1}^{\infty}Y_+ \ar[r] \ar[d]_-f& 
\Sigma _{S^1}^{\infty}A_f \ar[r]
	\ar[dl]^-{w_f}& \Sigma _{S^1}^{\infty}M_f \\
	\Sigma _{S^1}^{\infty}X_+, &&} 
\]
where the top row is a cofiber sequence and $w_f$ is a $S^1$-stable weak
equivalence. So this
induces a commutative diagram where the solid arrows form a fiber sequence in 
$\stablehomotopycdh ^{S^1}$ (recall that $\KGL$ is by definition 
$\mathbf L \pi ^{\ast} \KGL _k$) and $w^*_f$ is an isomorphism
(see the last paragraph of \cite[Page~592]{Voev-0}):
\[
\xymatrix@R=1pc{
\mathbf{RHom}(\Sigma _{S^1}^{\infty}M_f, \mathbf R\Omega _{t}^{\infty}\KGL) \ar[r] 
& 
\mathbf{RHom}(\Sigma _{S^1}^{\infty}A_f, \mathbf R\Omega _{t}^{\infty}\KGL) 
\ar[d]\\
\mathbf{RHom}(\Sigma _{S^1}^{\infty}X_+, \mathbf R\Omega _{t}^{\infty}\KGL) 
\ar@{-->}[r]_-{f^\ast} \ar@{-->}[ur]^(.35){w_f^{\ast}}
& \mathbf{RHom}(\Sigma _{S^1}^{\infty}Y_+, \mathbf R\Omega _{t}^{\infty}\KGL).}
\]
Thus, applying $\mathbf R\Gamma _k$, we obtain the commutative diagram 

\begin{align}  \label{dg.distreltr}
\begin{array}{c}
\xymatrix@R=1pc@C=1pc{
\mathbf R\Gamma _k \circ \mathbf{RHom}(\Sigma _{S^1}^{\infty}M_f, 
\mathbf R\Omega _{t}^{\infty}\KGL)\ar[r] &
\mathbf R\Gamma _k \circ \mathbf{RHom}(\Sigma _{S^1}^{\infty}A_f, 
\mathbf R\Omega _{t}^{\infty}\KGL) \ar[d] \\
\mathbf R\Gamma _k \circ \mathbf{RHom}(\Sigma _{S^1}^{\infty}X_+, 
\mathbf R\Omega _{t}^{\infty}\KGL) 
\ar@{-->}[r]_-{f^\ast} \ar@{-->}[ur]^(.35){w_f^{\ast}}& 
\mathbf R\Gamma _k \circ \mathbf{RHom}(\Sigma _{S^1}^{\infty}Y_+, 
\mathbf R\Omega _{t}^{\infty}\KGL).}
\end{array}
\end{align}
where the 
solid arrows form a fiber sequence and $w^*_f$ is
an isomorphism in $\mathcal H (S^1)$.

\begin{lem}\label{lm.secpres}
The composition of $w_f^\ast$ with the right vertical arrow 
in \eqref{dg.distreltr} is canonically identified with 
the pull-back map $f^{\ast}:KH(X)\rightarrow KH(Y)$ in  $\mathcal H(S^1)$.
\end{lem}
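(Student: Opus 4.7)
The plan is to identify the two endpoints of the composition with $KH(X)$ and $KH(Y)$ respectively, and then to observe that the composition corresponds by functoriality to the map $f: Y \to X$.

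For the identification of endpoints, I will invoke Remark \ref{rmk.loc-gl}, which gives a natural isomorphism $\mathbf R\Gamma_Z(-) \cong \mathbf R\Gamma_k \circ \mathbf{RHom}(\Sigma_{S^1}^\infty Z_+, -)$ for any $Z \in \Sch_k$. Plugging in $\mathbf R\Omega_{t}^\infty \KGL$ and applying Cisinski's representability theorem \cite{Cisinski}, according to which $\KGL = \mathbf L\pi^* \KGL_k$ represents Weibel's homotopy invariant $K$-theory on all of $\Sch_k$, yields canonical isomorphisms $\mathbf R\Gamma_k \circ \mathbf{RHom}(\Sigma_{S^1}^\infty Z_+, \mathbf R\Omega_{t}^\infty \KGL) \cong KH(Z)$ in $\mathcal H(S^1)$ that are natural in $Z$.

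For the composition, recall from Definition \ref{def.relKH}\eqref{def.relKH.b} that the pointed extension of $f$ factors in $\unsmotcdh$ as $Y_+ \xrightarrow{c_f} A_f \xrightarrow{w_f} X_+$. The right vertical arrow in \eqref{dg.distreltr} is $\mathbf R\Gamma_k \circ \mathbf{RHom}(c_f, \mathbf R\Omega_{t}^\infty \KGL)$, while the dashed arrow $w_f^*$ is $\mathbf R\Gamma_k \circ \mathbf{RHom}(w_f, \mathbf R\Omega_{t}^\infty \KGL)$; by contravariant functoriality in the first argument, their composite is the map induced by $w_f \circ c_f = f$. Combined with the naturality in $Z$ of the identification established in the previous paragraph, this composite is precisely $f^*: KH(X) \to KH(Y)$.

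I expect the main subtlety to lie in checking the naturality of Cisinski's identification under an arbitrary morphism $f$ in $\Sch_k$ (not required to be smooth or flat). This should follow formally by combining the adjunction \eqref{eq.globalsec} applied to $f$ with the adjunction $(\Sigma_t^\infty, \Omega_t^\infty)$ of \eqref{eq.s1Tadj} and the construction of $\KGL = \mathbf L\pi^* \KGL_k$, but the compatibility of $\mathbf R\Gamma$ with the pullback $f: Y_+ \to X_+$ in $\unsmotcdh$ at the level of the $S^1$-spectrum $\mathbf R\Omega_{t}^\infty \KGL$ needs to be spelled out carefully in order to ensure that Cisinski's section-wise identification is genuinely functorial.
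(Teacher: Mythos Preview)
Your proof is correct and follows essentially the same route as the paper: both rely on Cisinski's identification of $\KGL$ with $KH$. The paper's formulation is slightly more direct, however: it invokes the isomorphism $\mathbf R\Omega_t^\infty \KGL \cong KH$ in $\stablehomotopycdh^{S^1}$, i.e., at the level of presheaves of $S^1$-spectra on $\Sch_k$. Once stated this way, applying $\mathbf R\Gamma_k \circ \mathbf{RHom}(-,\cdot)$ is automatically contravariantly functorial in the first variable, so the naturality concern you raise at the end dissolves entirely---there is nothing to spell out, because the identification was never section-by-section to begin with. Your detour through Remark~\ref{rmk.loc-gl} and $\mathbf R\Gamma_Z$ is fine but unnecessary once one has the presheaf-level statement.
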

\begin{proof}
The lemma follows immediately by combining \S 2.16 (see p.~438, line~9)
and Proposition~2.19 of \cite{Cisinski}, which together imply that there is a
canonical isomorphism $\mathbf R\Omega _{t}^{\infty}\KGL \cong KH$ in
$\stablehomotopycdh ^{S^1}$.  
\end{proof}

\begin{cor}\label{cor.hfibident}
$\mathbf R\Gamma _k \circ \mathbf{RHom} (\Sigma _{S^1}^{\infty}M_f, 
\mathbf R\Omega _{t}^{\infty}\KGL)
\cong KH(f)$ in $\mathcal H (S^1)$ and therefore
$KH_n(f)\cong \Hom _{\stablehomotopycdh}(\Sigma _T ^{\infty}M_f[n], \KGL)$.
In particular, for a closed immersion $f:Y \inj X$, we have
\begin{equation}\label{eqn:CI}
KH_n(X,Y) \cong 
\Hom _{\stablehomotopycdh}(\Sigma _T ^{\infty}(X/Y)_+[n], 
\KGL).
\end{equation}
\end{cor}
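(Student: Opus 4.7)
The plan is to read off the first claim directly from the machinery already set up in diagram \eqref{dg.distreltr} together with \lemref{lm.secpres}, and then pass to homotopy groups using the $(\Sigma_t^\infty,\Omega_t^\infty)$ adjunction and Remark \ref{rmk.loc-gl}.

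First, I would use diagram \eqref{dg.distreltr}: its solid arrows form a fiber sequence in $\stablehomotopycdh^{S^1}$, so the homotopy fiber of the right vertical map is exactly $\mathbf R\Gamma _k \circ \mathbf{RHom}(\Sigma_{S^1}^{\infty}M_f,\mathbf R\Omega_{t}^{\infty}\KGL)$. By \lemref{lm.secpres}, the composite of that right vertical map with $w_f^{\ast}$ is canonically identified with the map $f^{\ast}\colon KH(X)\to KH(Y)$ in $\mathcal H(S^1)$. Since $w_f^{\ast}$ is an isomorphism, taking homotopy fibers yields a canonical isomorphism
\[
\mathbf R\Gamma _k \circ \mathbf{RHom}(\Sigma _{S^1}^{\infty}M_f,\mathbf R\Omega _{t}^{\infty}\KGL)\;\cong\;\mathrm{hofib}(f^{\ast}\colon KH(X)\to KH(Y))\;=\;KH(f),
\]
where the last equality is \defref{def.relKH}\eqref{def.relKH.a}. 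This establishes the first assertion.

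Next I would extract the statement about $KH_n(f)$ by taking $\pi_n$ of both sides, i.e.\ applying $\Hom_{\mathcal H(S^1)}(S^n,-)$. On the left, Remark \ref{rmk.loc-gl} rewrites $\mathbf R\Gamma_k$ via the internal hom, and the derived adjunction $(\Sigma_t^{\infty},\mathbf R\Omega_t^{\infty})$ from \eqref{eq.s1Tadj} converts maps from $\Sigma_{S^1}^{\infty}M_f$ (shifted by $n$) into $\mathbf R\Omega_t^{\infty}\KGL$ into maps from $\Sigma_t^{\infty}\Sigma_{S^1}^{\infty}M_f[n]=\Sigma_T^{\infty}M_f[n]$ into $\KGL$ in $\stablehomotopycdh$. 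This yields
\[
KH_n(f)\;\cong\;\Hom_{\stablehomotopycdh}(\Sigma_T^{\infty}M_f[n],\KGL).
\]

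Finally, for a closed immersion $f\colon Y\hookrightarrow X$, \defref{def.relKH}\eqref{def.relKH.b} identifies $M_f$ canonically with the quotient $X/Y$ in $\unsmotcdh$, which in turn equals $(X/Y)_+$ after adjoining a disjoint basepoint (since $M_f$ is already pointed by the image of the collapsed subobject). Substituting this identification gives the final formula $KH_n(X,Y)\cong \Hom_{\stablehomotopycdh}(\Sigma_T^{\infty}(X/Y)_+[n],\KGL)$. There is no real obstacle here: the argument is a routine bookkeeping of adjunctions, and the only point requiring care is checking that the identifications in \lemref{lm.secpres} and the adjunction \eqref{eq.s1Tadj} are compatible with the triangle structures in $\mathcal H(S^1)$ and $\stablehomotopycdh$, which is guaranteed since both adjunctions are Quillen pairs between stable model categories.
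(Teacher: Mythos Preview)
Your proof is correct and follows essentially the same approach as the paper: the paper's own proof simply cites \eqref{dg.distreltr}, \lemref{lm.secpres}, the Quillen adjunctions \eqref{eq.s1Tadj} and \eqref{eq.globalsec}, Cisinski's representability theorem, and \defref{def.relKH}\eqref{def.relKH.a}, which is exactly the machinery you unwind explicitly. One small remark: where you invoke Remark~\ref{rmk.loc-gl} to ``rewrite $\mathbf R\Gamma_k$ via the internal hom,'' note that the remark is phrased for $\mathbf R\Gamma_X$; what you actually use is the adjunction \eqref{eq.globalsec} for $X=\Spec(k)$ together with the hom-tensor adjunction for $\mathbf{RHom}$, which is the cleaner way to say it and matches the paper's citation of \eqref{eq.globalsec}.
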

\begin{proof}
The corollary follows directly from \eqref{dg.distreltr}, \lemref{lm.secpres}, 
Quillen adjunctions \eqref{eq.s1Tadj} and \eqref{eq.globalsec},  
representability result \cite[Th{\'e}or{\`e}me~2.20]{Cisinski},
and Definition~\ref{def.relKH}\eqref{def.relKH.a}.
\end{proof}

In this paper, we shall write $KH(f)$ and
$\mathbf R \Gamma _k \circ \mathbf{RHom} 
(\Sigma _{S^1}^{\infty}M_f, \mathbf R\Omega _{t}^{\infty}\KGL)$
interchangeably hereafter.
It follows from \eqref{dg.distreltr},
\lemref{lm.secpres} and \corref{cor.hfibident}
that the commutative diagram \eqref{dg.map.cofs}, associated to
\eqref{dg.map.pairs},
induces $(g,g')^{\ast}:KH(f)\rightarrow KH(f')$ in $\mathcal H (S^1)$ which 
fits in a morphism of distinguished triangles in $\mathcal H (S^1)$:
\begin{align}\label{dg.natrelKH}
\begin{array}{c}
\xymatrix@R=1pc{KH(f)\ar[d]_-{(g,g')^{\ast}} \ar[r]& KH(X)\ar[r]^-{f^\ast} 
\ar[d]^-{g^{\ast}}& 
KH(Y) \ar[d]^-{g'^{\ast}}\\
	KH(f') \ar[r]& KH(X')\ar[r]^-{f'^\ast} & KH(Y').}
\end{array}
\end{align}

\begin{remk}\label{rmk.natrelKH}
By construction, the diagram \eqref{dg.natrelKH} is natural in $(g,g')$
in \eqref{dg.map.pairs}.
\end{remk}

\section{Relative motivic cohomology}\label{sec:MCS}
We continue to assume that $k$ is a perfect field of exponential 
characteristic $p$. In this section, we define our relative motivic
cohomology for a closed immersion of schemes $Y \subset X$ in $\Sch_k$.
We shall then show that this relative motivic cohomology can be
described as the $cdh$-hypercohomology of a presheaf of complexes of
equidimensional cycles. These relative cohomology groups will 
later constitute the $E_2$-terms of
our spectral sequence for relative $KH$-theory.

\subsection{Motivic cohomology of singular schemes}\label{sec:Mot-sing}
Recall from \cite[Lecture~16]{MVW} that given $T \in \Sch_k$ and an integer
$r \ge 0$, the presheaf $z_{equi}(T,r)$ on $\Sm_k$ is defined by
letting $z_{equi}(T,r)(U)$ be the free abelian group generated by the closed
and irreducible subschemes $Z \subsetneq U \times T$ which are dominant and
equidimensional of relative dimension $r$ (any fiber is either empty or all
its components have dimension $r$) over a component of $U$. 
It is known that $z_{equi}(T,r)$ is a sheaf on the big 
{\'e}tale site of $\Sm_k$.

Let $C_{\ast}z_{equi}(T,r)$ denote the chain complex of
presheaves of abelian groups associated, via the Dold-Kan correspondence, to
the simplicial presheaf on $\Sm_k$ given by
$C_nz_{equi}(T,r)(U) = z_{equi}(T,r)(U \times \Delta^n_k)$.
The simplicial structure on $C_{\ast}z_{equi}(T,r)$ 
is induced by the cosimplicial scheme  $\Delta^{\bullet}_k$. 
Recall the following definition of motivic cohomology of singular schemes
from \cite[Definition~9.2]{FV}.

\begin{defn}\label{defn:MC-sing}
The motivic cohomology groups of $X \in \Sch_k$ are defined as the
hypercohomology 
\[
\begin{array}{lll}
H^m(X, \Z(n)) & = & 
\H^{m-2n}_{cdh}(X,  \mathbf L \pi ^{\ast}(C_{\ast}z_{equi}(\A^n_k,0)))
\\
& \cong & \H^{m-2n}_{cdh}(X,  C_{\ast}z_{equi}(\A^n_k,0)_{cdh}).
\end{array}
\]
\end{defn}

We will also need to consider $\mathbb Z [\tfrac{1}{p}]$-coefficients.
In this case, we will write:
\[  H^m(X, \Z [\tfrac{1}{p}](n))=
\H^{m-2n}_{cdh}(X,  \mathbf L \pi ^{\ast}(C_{\ast}z_{equi}(\A^n_k,0)[\tfrac{1}{p}])).
\]

For $n < 0$, we set $H^m(X, \Z(n)) =  H^m(X, \Z [\tfrac{1}{p}](n)) = 0$.

\subsection{Motivic cohomology via $\stablehomotopycdh$}
\label{sec:Rep-MCoh}
In order to represent the motivic cohomology of a singular scheme $X$ in 
$\stablehomotopycdh$, let us recall the Eilenberg-MacLane spectrum 
\[
H\Z = (K(0,0), K(1, 2), \cdots , K(n,2n), \cdots )
\]
in $\Tspectra$, where
$K(n,2n)$ is the presheaf of simplicial abelian groups on $\Sm_k$ associated to
the presheaf of chain complexes 
$C_{\ast}(\frac{z_{equi}(\P^n_k,0)}{z_{equi}(\P^{n-1}_k,0)})$ via the Dold-Kan
correspondence. The assembly maps of this spectrum are induced by the 
canonical map
$g: \P^1_k \to C_{\ast}z_{equi}(\P^1_k, 0)$. This map assigns to any map
$U \to \P^1_k$ its graph in $U \times \P^1_k$.
This in turn descends to maps $T \wedge K(n,2n) 
\xrightarrow{\ov{g} \wedge {\rm Id}}
K(1,2) \wedge K(n,2n) \xrightarrow{\times} K(n+1, 2n+2)$, 
where the latter is the obvious external product map.
Using the localization theorem, it follows that
$K(n,2n)$ is weak equivalent to $C_{\ast}z_{equi}(\A^n_k,0)$.
We shall not distinguish between a simplicial abelian group and the associated
chain complex of abelian groups from now on in this text and will use them
interchangeably.  

By \cite[Theorem~3.10]{KP} motivic cohomology can be
defined via $\stablehomotopycdh$, so this leads naturally to the following
definition of relative motivic cohomology.
Recall our notation that $\Lambda$ denotes the ring $\Z$ if $k$ admits 
resolution of singularities, or the ring $\Z[\tfrac{1}{p}]$ otherwise.

\subsubsection{Relative motivic cohomology}\label{sec:RMC**}
Let $f:Y \to X$ be any morphism in $\Sch_k$. 
For any commutative ring $R$, we define the relative motivic cohomology of 
the pair $(X,Y)$ with coefficients in $R$ by
\begin{equation}\label{eqn:RMC-1-0}
H^m(X|Y, R(n))  = 
\Hom_{\stablehomotopycdh}(\Sigma^{\infty}_T M_f, 
\Sigma^{m,n} \mathbf L \pi^* HR).
\end{equation}
For $n < 0$, we set $H^m(X|Y, R(n)) = 0$.
Notice that by \cite[3.10]{KP}
this definition reduces to ~\ref{defn:MC-sing} when $Y = \emptyset$ and
$R = \Lambda$.

\begin{prop}\label{thm:cdhdescent}
Let $k$ be a perfect field of exponential characteristic $p$.
For any closed immersion $f: Y \inj X$ in $\Sm_k$ and integers 
$m$, $n \in \Z$, there is a natural isomorphism
\begin{equation}\label{eqn:cdhdescent-01}
\theta_{X|Y}: H^m(X|Y, \Z(n)) \xrightarrow{\cong} 
\Hom_{\stablehomotopy}(\Sigma^{\infty}_T M_f, \Sigma^{m,n} H\Z).
\end{equation}
\end{prop}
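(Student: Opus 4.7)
The plan is to reduce \eqref{eqn:cdhdescent-01} to the absolute case, which is \cite[Theorem~3.10]{KP}, via the adjunction $(\mathbf{L}\pi^{\ast}, \mathbf{R}\pi_{\ast})$ and a five-lemma argument on cofiber sequences. Since $f: Y \inj X$ is a closed immersion of \emph{smooth} schemes, one can carry out the mapping-cone construction of Definition~\ref{def.relKH}\eqref{def.relKH.b} entirely in $\unsmot$ to produce an object $M_f^{\Sm}$ with a cofiber sequence $Y_+ \to X_+ \to M_f^{\Sm}$ in $\unsmot$. Because $\pi^{\ast}$ is a left Quillen functor that preserves the representables $X_+, Y_+$ and commutes with pushouts, the canonical comparison furnishes a natural isomorphism $\mathbf{L}\pi^{\ast}\Sigma^{\infty}_T M_f^{\Sm} \cong \Sigma^{\infty}_T M_f$ in $\stablehomotopycdh$.

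First I would apply the derived adjunction together with the fact, recalled after Proposition~\ref{prop:tablebasech}, that $\mathbf{L}\pi^{\ast}$ commutes with the suspensions $\Sigma^{m,n}$, to obtain
\[
\Hom_{\stablehomotopycdh}(\Sigma^{\infty}_T M_f, \Sigma^{m,n}\mathbf{L}\pi^{\ast} H\Z) \;\cong\; \Hom_{\stablehomotopy}(\Sigma^{\infty}_T M_f^{\Sm}, \Sigma^{m,n}\mathbf{R}\pi_{\ast}\mathbf{L}\pi^{\ast} H\Z).
\]
The map $\theta_{X|Y}$ is then identified with the map induced by the unit $\eta: H\Z \to \mathbf{R}\pi_{\ast}\mathbf{L}\pi^{\ast} H\Z$ of the adjunction.

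Next I would invoke the absolute case: by \cite[Theorem~3.10]{KP}, $\eta$ becomes an isomorphism after applying $\Hom_{\stablehomotopy}(\Sigma^{\infty}_T W_+, \Sigma^{m,n}(-))$ for every $W \in \Sm_k$ and every $m, n \in \Z$. Applying this to both $W = X$ and $W = Y$, and using that $\Hom_{\stablehomotopy}(-, \Sigma^{m,n} E)$ turns the cofiber sequence $\Sigma^{\infty}_T Y_+ \to \Sigma^{\infty}_T X_+ \to \Sigma^{\infty}_T M_f^{\Sm}$ into a long exact sequence for $E = H\Z$ and for $E = \mathbf{R}\pi_{\ast}\mathbf{L}\pi^{\ast} H\Z$, the five-lemma applied to the morphism of long exact sequences induced by $\eta$ forces an isomorphism at the $M_f^{\Sm}$ term, which is exactly $\theta_{X|Y}$.

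The principal point requiring care is the identification $\mathbf{L}\pi^{\ast}\Sigma^{\infty}_T M_f^{\Sm} \cong \Sigma^{\infty}_T M_f$ at the level of the specific factorizations $Y_+ \stackrel{c_f}{\to} A_f \to X_+$ chosen in $\unsmot$ and $\unsmotcdh$. Since $\pi^{\ast}$ preserves cofibrations, trivial fibrations, and pushouts of cofibrant objects, any two functorial choices of factorization are connected by a weak equivalence, so this compatibility reduces to tracking the functorial cofibrant replacements of Definition~\ref{def.relKH}\eqref{def.relKH.b} under $\pi^{\ast}$. The naturality of \eqref{eqn:cdhdescent-01} in the pair $(X,Y)$ is then immediate from the naturality of all the ingredients used.
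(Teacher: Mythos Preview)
Your argument is correct and is essentially an unpacking of the paper's one-line proof, which simply invokes the definition~\eqref{eqn:RMC-1-0} together with \cite[Corollary~2.17]{KP}. The paper's citation of \cite[Corollary~2.17]{KP} plays exactly the role of your adjunction-plus-five-lemma step: that result says, in effect, that for objects built from smooth $k$-schemes the comparison along $\mathbf L\pi^\ast$ is an isomorphism integrally, so one does not need to spell out the cofiber sequence explicitly.

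One small caution on references: you appeal to \cite[Theorem~3.10]{KP} for the absolute case, but as the paper notes just before \eqref{eqn:RMC-1-0}, that result is stated with $\Lambda$-coefficients, whereas here you need the integral statement for smooth $W$. This is precisely why the paper cites \cite[Corollary~2.17]{KP} instead; you should do the same, or else justify separately why the $\Z$-coefficient comparison holds for $W\in\Sm_k$. Apart from this bibliographic point your construction of $M_f^{\Sm}$ in $\unsmot$, the identification $\mathbf L\pi^\ast\Sigma^\infty_T M_f^{\Sm}\cong\Sigma^\infty_T M_f$, and the five-lemma on the long exact sequences are all fine.
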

\begin{proof}
Follows from the definition of relative motivic cohomology \eqref{eqn:RMC-1-0} 
and \cite[Corollary~2.17]{KP}.
\end{proof}

\subsection{Motivic cohomology via $\DM_k$}
\label{sec:Rep-MCoh-DM}
Let ${\rm MS}^{\rm tr}$, ${\rm MS}^{\rm tr}_{cdh}$ denote the category of 
presheaves with transfers on $\Sm_k$, $\Sch _k$, respectively; equipped with
the model structure induced by the adjunction $(tr,U):\unsmot \rightarrow
{\rm MS}^{\rm tr}$, $(tr,U):\unsmotcdh \rightarrow {\rm MS}^{\rm tr}_{cdh}$,
where $U$ is the functor that forgets transfers, i.e., a map $f$ in 
${\rm MS}^{\rm tr}$, 
${\rm MS}^{\rm tr}_{cdh}$ is a weak equivalence (resp. a fibration) 
if and only if
$U(f)$ is a weak equivalence (resp. a fibration) in $\unsmot$ and $\unsmotcdh$.

We will write $Spt({\rm MS}^{\rm tr})$, $Spt({\rm MS}^{\rm tr}_{cdh})$ for the 
category of symmetric $T$-spectra on ${\rm MS}^{\rm tr}$, 
${\rm MS}^{\rm tr}_{cdh}$,
respectively; equipped with the stable model structure \cite[8.7]{Hovey}, where
$T$ is identified with $C_*(\frac{z_{equi}(\P^1_X,0)}{z_{equi}(X,0)})$.
Let ${\rm DM}_k$ (resp. ${\rm DM_{cdh}}$) denote
the homotopy category of $Spt({\rm MS}^{\rm tr})$, 
$Spt({\rm MS}^{\rm tr}_{cdh})$.  Notice that
${\rm DM}_k$ is Voevodsky's triangulated category of motives.

There is a $T$-suspension functor $\Sigma^{\infty}_T: {\rm MS}^{\rm tr} \to
Spt({\rm MS}^{\rm tr})$, $\Sigma^{\infty}_T: {\rm MS}^{\rm tr}_{cdh} \to
Spt({\rm MS}^{\rm tr}_{cdh})$, and the forgetful functor induces levelwise
Quillen adjunctions: 
\begin{align}  \label{adj.DM-SH}
(tr, U):\Tspectra \rightarrow Spt({\rm MS}^{\rm tr}), \quad
(tr, U):\Tspectracdh \rightarrow Spt({\rm MS}^{\rm tr}_{cdh}),
\end{align}
which fit in a commutative diagram of left 
Quillen functors:
\begin{align}\label{eqn:SH-DM}
\begin{array}{c}
\xymatrix@C=1pc@R=1pc{
Spt(\unsmot) \ar[r]^-{\pi ^{\ast}} \ar[d]_{tr} & Spt(\unsmotcdh)
\ar[d]^{tr} \\
Spt({\rm MS}^{\rm tr}) \ar[r]^-{\pi ^{\ast}} & Spt({\rm MS}^{\rm tr}_{cdh}).}
\end{array}
\end{align}

For $X\in \Sch _k$,  we will write $M(X)\in \DM_{cdh}$ 
for $tr(\Sigma _T ^{\infty}X)$ and $\mathbb Z(n)[m]\in \DM_{cdh}$, $m$, 
$n\in \mathbb Z$ for 
$tr(\Sigma ^{m,n}\Sigma _T ^{\infty}S^0_s)$.  
Given $E\in \DM_{cdh}$, we will write $E(n)$ for $E\otimes \mathbb Z (n)$.

Notice that by construction 
the spectrum $H\mathbb Z \in \stablehomotopycdh$ 
representing motivic cohomology has transfers.  Furthermore, 
for every $m$, $n\in \mathbb Z$ \cite[5.10-5.11]{CD-1}:
\begin{align}  \label{eq.motcplx}
U(\Lambda(n)[m])\cong \Sigma ^{m,n} \mathbf L \pi^* H\Lambda.
\end{align}

Since $tr(\Sigma^{\infty}_T(X_+)) = M(X)$ for any $X \in \Sch_k$,
it follows that $M(Y) \to M(X) \to tr(\Sigma^{\infty}_T(M_f))$
is a distinguished triangle in $\DM_{cdh}$ for any closed immersion $Y \inj X$. 
We let $M(X/Y) := tr(\Sigma^{\infty}_T(M_f))$.

Let ${\rm DM}(k, \Lambda)$  and ${\rm DM}_{cdh}(k, \Lambda)$
denote Voevodsky's big categories of motives over $k$ with coefficients in
$\Lambda$ with respect to the Nisnevich and cdh-topologies, respectively.
Note that ${\rm DM}(k, \Z) = \DM_k$. 
It follows from \cite[Proposition 8.1(c)]{CD-1} that the functor
$\mathbf L \pi^*: {\rm DM}(k, \Lambda) \to {\rm DM}_{cdh}(k, \Lambda)$
is an equivalence of tensor triangulated categories. 
As a consequence of \ref{adj.DM-SH}-\ref{eq.motcplx} and 
\ref{eqn:RMC-1-0}-\ref{thm:cdhdescent}, 
we get:
\begin{prop}\label{prop:DMrep}
Let $k$ be a perfect field of exponential characteristic $p$.
For any closed immersion $f: Y \inj X$ in $\Sch_k$ and integers 
$m$, $n \in \Z$, there is a natural isomorphism
\[
H^m(X|Y, \Lambda(n))  \xrightarrow{\cong} 
 \Hom_{\DML}(M(X/Y), \Lambda(n)[m]).
\]
If $X, Y \in \Sm_k$, there is a natural isomorphism
\[
H^m(X|Y, \Z(n))  \xrightarrow{\cong} 
 \Hom_{\DM_k}(M(X/Y), \Z(n)[m]).
\]
\end{prop}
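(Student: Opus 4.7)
The plan is to start from the definition \eqref{eqn:RMC-1-0} and migrate the Hom-set through a chain of standard adjunctions and equivalences: first transport it from $\stablehomotopycdh$ to $\DM_{cdh}(k,\Lambda)$ using the Quillen adjunction $(tr,U)$ from \eqref{adj.DM-SH}, and then use Cisinski–Déglise's comparison $\mathbf{L}\pi^* : \DML \xrightarrow{\cong} \DM_{cdh}(k,\Lambda)$ to land in $\DML$. The identification \eqref{eq.motcplx} of $\Sigma^{m,n}\mathbf L\pi^*H\Lambda$ with $U(\Lambda(n)[m])$ is precisely what allows the adjunction to be applied to the target object.

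First I would unwind the definition:
\[
H^m(X|Y,\Lambda(n)) = \Hom_{\stablehomotopycdh}\!\bigl(\Sigma^\infty_T M_f,\ \Sigma^{m,n}\mathbf L\pi^*H\Lambda\bigr)
= \Hom_{\stablehomotopycdh}\!\bigl(\Sigma^\infty_T M_f,\ U(\Lambda(n)[m])\bigr),
\]
using \eqref{eq.motcplx}. The derived adjunction $(\mathbf L\,tr,\mathbf R\,U)$ arising from \eqref{adj.DM-SH} then identifies this group with $\Hom_{\DM_{cdh}(k,\Lambda)}(\mathbf L\,tr(\Sigma^\infty_T M_f), \Lambda(n)[m])$. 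Since $\Sigma^\infty_T M_f$ is cofibrant (as a pushout of cofibrations starting from $\Sigma^\infty_T Y_+$), $\mathbf L\,tr(\Sigma^\infty_T M_f) \cong tr(\Sigma^\infty_T M_f)$, and the triangle $M(Y)\to M(X)\to tr(\Sigma^\infty_T M_f)$ in $\DM_{cdh}$ shows this equals $M(X/Y)$ by definition. Finally, invoking the tensor-triangulated equivalence $\mathbf L\pi^*:\DML\xrightarrow{\cong}\DM_{cdh}(k,\Lambda)$ from \cite[Proposition 8.1(c)]{CD-1} transports the Hom-set back to $\DML$, yielding the first isomorphism. Naturality in the pair $(X,Y)$ is automatic from the naturality of each step.

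For the second statement, I would combine the above with Proposition \ref{thm:cdhdescent}. Since $X,Y\in\Sm_k$ and $f$ is a closed immersion, that proposition gives
\[
H^m(X|Y,\Z(n)) \cong \Hom_{\stablehomotopy}\!\bigl(\Sigma^\infty_T M_f,\ \Sigma^{m,n}H\Z\bigr),
\]
and one then repeats the adjunction/identification argument in the Nisnevich setting: use the Nisnevich version of $(tr,U)$ from \eqref{adj.DM-SH} together with the Nisnevich analogue of \eqref{eq.motcplx} (namely $U(\Z(n)[m])\cong \Sigma^{m,n}H\Z$ in $\sS\sH(k)$), to rewrite the Hom-set as $\Hom_{\DM_k}(M(X/Y),\Z(n)[m])$.

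The only nontrivial point is the careful bookkeeping between derived and underived functors: one must check that $\Sigma^\infty_T M_f$ is cofibrant (so no fibrant/cofibrant replacement on the left alters the computation), that $H\Lambda$ is fibrant as a symmetric $T$-spectrum with transfers after applying $U$ (so the adjunction computes the derived Hom), and that the equivalence $\mathbf L\pi^*$ of \cite{CD-1} really does send $\Lambda(n)[m]$ in $\DML$ to $\Lambda(n)[m]$ in $\DM_{cdh}(k,\Lambda)$, which is part of its construction. The technical heart of the argument is thus not the organization of the proof, which is essentially a diagram chase, but an appeal to the non-trivial comparison theorem of Cisinski–Déglise identifying the Nisnevich and cdh motivic categories.
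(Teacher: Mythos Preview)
Your proposal is correct and follows essentially the same approach as the paper. The paper's proof is a one-line citation of exactly the ingredients you use: the adjunction \eqref{adj.DM-SH}, the identification \eqref{eq.motcplx}, the definition \eqref{eqn:RMC-1-0}, Proposition~\ref{thm:cdhdescent}, and the Cisinski--D\'eglise equivalence $\mathbf L\pi^*:\DML\xrightarrow{\cong}\DM_{cdh}(k,\Lambda)$; you have simply spelled out the chain of isomorphisms in detail.
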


\subsection{Motivic Cohomology with (compact) supports}  
\label{ss.motcohcs}
Let $X\in \Sch _k$, $j:U\rightarrow X$ an open immersion and $Z=X\backslash U$
its closed complement equipped with the reduced scheme structure.  We will
write $M^c(X)\in \DM_{cdh}$, $M_Z(X)\in \DM_{cdh}$ respectively for the motive of
$X$ with compact supports \cite[\S 4.1]{Voev-1} and the motive of $X$
with supports on $Z$,
where the latter is defined in terms of a distinguished triangle
$M(U)\stackrel{j}{\rightarrow} M(X)\rightarrow M_Z(X)$ in $\DM_{cdh}$.

\begin{defn}  \label{df.motcohcs}
With the notation of \ref{eq.motcplx} and \ref{ss.motcohcs}, 
let $m$, $n\in \mathbb Z$.  The motivic
cohomology of $X$ with compact supports of degree $m$ and weight $n$ 
is given as:
\[  H^{m}_c(X,\mathbb Z(n)):=
\Hom _{\DM_{cdh}}(M^c(X),\mathbb Z(m)[n]).
\]
Similarly, the motivic cohomology of $X$ with supports on $Z$ of degree $m$ and
weight $n$ is given by $H^{m}_Z(X,\mathbb Z(n)):=
\Hom _{\DM_{cdh}}(M_Z(X),\mathbb Z(n)[m])$.
The motivic cohomology groups $H^{m}_c(X,\mathbb Z[\tfrac{1}{p}](n))$
and $H^{m}_Z(X,\mathbb Z[\tfrac{1}{p}](n))$ are defined analogously.
\end{defn}

\begin{prop}\label{rep.motcohcs}
With the notation of \ref{def.relKH}, assume that $f$ is a closed (resp. open)
immersion, and let $U=X\backslash Y$ (resp. $Z=X\backslash Y$).  
If $f$ is closed, assume in addition that $X$ is proper.
Then there are natural isomorphisms:
\begin{align*}  
\Hom_{\stablehomotopycdh}(\Sigma_T ^{\infty} M_f, 
& \Sigma ^{m,n} \mathbf L \pi^* H\Lambda) \cong 
H^{m}_c(U, \Lambda(n))\\
resp.\quad \Hom_{\stablehomotopycdh}(\Sigma_T ^{\infty} M_f, 
& \Sigma ^{m,n} \mathbf L \pi^* H\Lambda) \cong 
H^{m}_Z(X,\Lambda (n)).
\end{align*}

If $X, Y \in \Sm_k$, these isomorphisms hold integrally.
\end{prop}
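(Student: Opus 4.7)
The plan is to reduce both isomorphisms to suitable identifications of the motive $M(X/Y) = tr(\Sigma^{\infty}_T M_f)$ inside $\DM_{cdh}(k,\Lambda)$, and then invoke \propref{prop:DMrep} together with \defref{df.motcohcs}. As a first step, combining \eqref{eqn:RMC-1-0} with \propref{prop:DMrep} gives natural isomorphisms
\[
\Hom_{\stablehomotopycdh}(\Sigma^{\infty}_T M_f, \Sigma^{m,n}\mathbf L\pi^*H\Lambda)
\;\cong\; H^m(X|Y,\Lambda(n))
\;\cong\; \Hom_{\DM_{cdh}(k,\Lambda)}(M(X/Y),\Lambda(n)[m]).
\]
So it suffices to produce a canonical isomorphism $M(X/Y)\cong M_Z(X)$ in the open case, and $M(X/Y)\cong M^c(U)$ in the closed/proper case, both in $\DM_{cdh}(k,\Lambda)$.

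For the open immersion case, the cofiber sequence $Y_+\to X_+\to M_f$ in $\unsmotcdh$ becomes, after applying the left Quillen functor $tr\circ\Sigma^{\infty}_T$ of \eqref{adj.DM-SH}, a distinguished triangle $M(Y)\to M(X)\to M(X/Y)$ in $\DM_{cdh}(k,\Lambda)$. By definition in \S\ref{ss.motcohcs}, $M_Z(X)$ fits in the distinguished triangle $M(Y)\to M(X)\to M_Z(X)$ with the same first arrow. Then the axioms of a triangulated category yield a canonical isomorphism $M(X/Y)\xrightarrow{\cong} M_Z(X)$ compatible with the structure maps, which suffices for the statement about $\Hom$-groups.

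For the closed immersion case with $X$ proper, I will invoke the localization distinguished triangle for motives with compact support
\[
M^c(Y)\longrightarrow M^c(X)\longrightarrow M^c(U)\longrightarrow M^c(Y)[1]
\]
in $\DM_{cdh}(k,\Lambda)$, which is due to Voevodsky \cite[\S 4]{Voev-1} under resolution of singularities and to Kelly \cite{Kelly} with $\Z[\tfrac{1}{p}]$-coefficients in positive characteristic (hence available with $\Lambda$-coefficients under our running hypothesis on $\Lambda$). Since $X$ is proper, the natural map $M(X)\to M^c(X)$ is an isomorphism, and the same holds for the closed subscheme $Y$, which is also proper. Substituting $M(Y)$ and $M(X)$ in the localization triangle and comparing with the defining triangle $M(Y)\to M(X)\to M(X/Y)$ produces a canonical isomorphism $M(X/Y)\xrightarrow{\cong} M^c(U)$, yielding the required identification with $H^m_c(U,\Lambda(n))$.

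Finally, the integral statement when $X,Y\in\Sm_k$ follows by repeating the same argument but using the second isomorphism of \propref{prop:DMrep}, which lands in $\DM_k$ rather than $\DM_{cdh}(k,\Lambda)$, together with the localization triangle for $M^c$ in the smooth setting (where it holds integrally). The main technical obstacle is checking that the localization triangle for motives with compact support is available in $\DM_{cdh}(k,\Lambda)$ under our assumption on $\Lambda$; once that is granted, the remainder is a direct triangulated-category comparison of cofibers.
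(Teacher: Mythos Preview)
Your approach is essentially the same as the paper's: reduce via \propref{prop:DMrep} and the adjunction \eqref{adj.DM-SH} to identifying $tr(\Sigma^{\infty}_T M_f)$ with $M_Z(X)$ (resp.\ $M^c(U)$) by comparing distinguished triangles, using properness to identify $M\cong M^c$ for $X$ and $Y$. One small correction on attribution: the localization triangle $M^c(Y)\to M^c(X)\to M^c(U)$ is \cite[Proposition~4.1.5]{Voev-1} and holds integrally over any perfect field; the role of $\Lambda$ (and of \cite{Kelly}, or rather \cite[Proposition~8.1(c)]{CD-1}) is to ensure the equivalence $\DM(k,\Lambda)\simeq \DM_{cdh}(k,\Lambda)$ so that the triangle transports to $\DM_{cdh}$, not to establish the triangle itself.
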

\begin{proof}
We will give the argument for the first isomorphism since the other one is 
parallel.
By construction \eqref{def.relKH}, there is a distinguished triangle 
$(cs)=\Sigma _T^{\infty}Y_+ \rightarrow \Sigma _T^{\infty}X_+ \rightarrow 
\Sigma _T^{\infty}M_f$ in $\stablehomotopycdh$.  Combining the Quillen 
adjunctions
\eqref{adj.DM-SH} with \eqref{eq.motcplx}, we are reduced to show that
$tr(\Sigma _T^{\infty}M_f)\cong M^c(U)$ in $\DM_{cdh}$ in general and
in $\DM_k$ if $X, Y \in \Sm_k$.  But this follows from
\cite[Proposition~4.1.5]{Voev-1} and \cite[Proposition~8.1(c)]{CD-1} 
since applying $tr$ to (cs), we obtain a distinguished triangle 
$M(Y) \rightarrow M(X) \rightarrow tr(\Sigma _T^{\infty}M_f)$ in $\DM_{cdh}$
(in $\DM_k$ if $X, Y \in \Sm_k$).
\end{proof}

\subsection{Relative motivic cohomology in terms of 
hypercohomology of sheaves of equidimensional cycles}\label{sec:RMC}
Let $X \in \Sm_k$ and let $Y \subset X$ be an effective Cartier divisor.
In this case, we can describe the relative motivic cohomology group in terms of
the hypercohomology of a complex of equidimensional cycles.
Let $S_X$ denote the double of $X$ along $Y$ and let $\nabla: S_X \to
X$ denote the fold map (see \S~\ref{sec:double}).
We let $\nabla$ also denote the morphism of sites 
$(S_X)_{cdh} \rightarrow X_{cdh}$. For any $Z\in \Sch _k$, let $Sh(Z)_{cdh}$ 
denote the category of sheaves of abelian groups on the cdh-site $Z_{cdh}$.

\begin{lem}\label{lem:hyper-0}
The direct image functor $\nabla _{\ast}: Sh(S_X)_{cdh} \rightarrow Sh(X)_{cdh}$ 
is exact. 
\end{lem}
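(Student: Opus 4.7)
The plan is to exploit the fact that $\nabla$ is finite (in fact of degree at most $2$), which follows from the construction $S_X = X \amalg_D X$ together with the identities $\nabla \circ \iota_\pm = \id_X$; concretely, for any $U \to X$ one has $U \times_X S_X \cong U \amalg_{U \times_X D} U$, which is finite over $U$. Since $\nabla_*$ is automatically left exact, only right exactness (preservation of surjections of cdh-sheaves) is at issue, and I would check this at cdh-stalks.

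The first step is to describe the fiber of $\nabla$ over a Henselian local scheme $\Spec(R) \to X$. If the image of $\Spec(R)$ does not meet $D$, then $S_X \times_X \Spec(R) \cong \Spec(R) \amalg \Spec(R)$. If it does meet $D$, and $I \subset R$ denotes the pullback of the ideal sheaf of $D$, then the coCartesian/Cartesian square ~\eqref{eqn:rel-et-2} pulls back (using that the formation of such pushouts commutes with flat base change and the map $R \to R/I$ is surjective) to give $S_X \times_X \Spec(R) \cong \Spec(R \times_{R/I} R)$. The ring $R \times_{R/I} R$ is a finite local $R$-algebra, hence Henselian because $R$ is. In either case, $S_X \times_X \Spec(R)$ is a finite disjoint union of Henselian local schemes.

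The second step is to invoke the standard description of cdh-stalks (see, for instance, Suslin--Voevodsky and Gabber--Kelly), according to which the stalk of a cdh-sheaf at a point is computed by sections over such Henselian local models. Combined with the fiber computation above, this yields, for each point $x \in X$, a natural identification
\[
(\nabla_* \cF)_x \;\cong\; \prod_{y \in \nabla^{-1}(x)} \cF_y,
\]
since sections of $\cF$ on a finite disjoint union of Henselian local schemes split as the product of the stalks at each point. A finite product of exact functors is exact, so $\nabla_*$ preserves surjections and hence is exact.

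The main obstacle I anticipate is the precise formulation of cdh-stalks and verifying that the pullback of a Henselian local model along $\nabla$ is itself admissible as a model for stalks on $(S_X)_{cdh}$; this is where finiteness of $\nabla$ is essential, since finite morphisms preserve the class of Henselian local schemes. Once that point is handled cleanly, the exactness is immediate from the stalk-wise product decomposition.
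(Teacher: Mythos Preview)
Your approach differs from the paper's. The paper argues in one line: $\nabla$ is finite and admits the section $\iota_+$, hence is a cdh-covering by \cite[Lemma~5.8]{SV}, and this suffices for exactness of $\nabla_*$. No stalk computation enters.

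Your stalk-based strategy is the natural analogue of the standard Nisnevich/\'etale proof that pushforward along a finite morphism is exact, but the obstacle you flag is a genuine gap, not a technicality. For the cdh-topology, the conservative family of points is given by spectra of Henselian \emph{valuation} rings (Gabber--Kelly), not arbitrary Henselian local rings. When the image of $\Spec R$ meets $D$, the fiber $R\times_{R/I}R$ is indeed Henselian local, but it carries zero-divisors (for $I=(t)$ one has $(t,0)\cdot(0,t)=0$), so it is not a valuation ring and hence not cdh-local. Your final sentence---``finite morphisms preserve the class of Henselian local schemes''---is true, but it is the Nisnevich statement; the relevant cdh statement, that finite morphisms preserve the class of cdh-local schemes, fails in exactly this situation. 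Consequently the identification $(\nabla_*\cF)_x \cong \prod_{y}\cF_y$, with cdh-stalks on the right, is not justified as written. One could try to repair this by applying cdh-descent for the base-changed square~\eqref{eqn:rel-et-2} over $\Spec R$, expressing $\cF(\Spec(R\times_{R/I}R))$ as a fiber product over $\cF(\Spec(R/I))$; but $R/I$ is again not a valuation ring in general, so further work is needed. The paper's route via the covering property avoids all of this.
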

\begin{proof}
It suffices to show that $\nabla :S_X \rightarrow X$ is a cdh-covering.  
But this follows immediately from \cite[Lemma 5.8]{SV} because $\nabla$
is finite and it admits a section (see \S~\ref{sec:double}).
\end{proof}

For any $Z \in \Sch_k$ and an integer $n \ge 0$, we let $\Z_Z(n)[2n]$
denote the complex of sheaves $C_*z_{equi}(\A^n_k,0)_{Z_{cdh}}$ on
$Z_{cdh}$. 
It follows from \lemref{lem:hyper-0} that
\begin{equation}\label{eqn:nabla*-0}
H^m(S_X, \Lambda(n)) \cong \H^{m-2n}_{cdh}(X, \nabla_*(\Lambda_{S_X}(n)[2n])).
\end{equation}

For any morphism of schemes $Y \to X$, we obtain
maps $Y \xrightarrow{\iota_-} S_X \times_X Y \to Y$ whose composite is 
identity.
Pulling back cycles along $\iota_-$, we obtain a
map $\iota^*_-: \nabla_*(\Lambda_{S_X}(n)[2n]) \to \Lambda_{X}(n)[2n]$ which
admits a section 
$\nabla^*: \Lambda_{X}(n)[2n] \to \nabla_*(\Lambda_{S_X}(n)[2n])$.
We let $\Lambda_{X|Y}(n)[2n] := {\rm Ker}(\iota^*_-)$ so that there is
a split short exact sequence on $X_{cdh}$:
\begin{equation}\label{eqn:nabla*}
0 \to \Lambda_{X|Y}(n)[2n] \to \nabla_*(\Lambda_{S_X}(n)[2n]) \to 
\Lambda_{X}(n)[2n] \to 0.
\end{equation}

Since ~\eqref{eqn:rel-et-2} is a $cdh$-square (see \S~\ref{sec:double}), the
map $\iota^*_+: 
H^m(S_X|X_-, \Lambda(n)) \to H^m(X|Y, \Lambda(n))$ is an isomorphism.
We conclude from ~\eqref{eqn:nabla*-0} and ~\eqref{eqn:nabla*}, the 
following description of the relative motivic cohomology of the pair
$(X,Y)$ in terms of equidimensional cycles.

\begin{prop}\label{prop:rel-mot-coh-equid}
For any integers $m, n \in \Z$, there is a natural isomorphism
\[
H^m(X|Y, \Lambda(n)) \cong \H^{m-2n}_{cdh}(X, \Lambda_{X|D}(n)[2n]).
\]
If $X$ is furthermore projective, we have
\[
H^m(X|Y, \Lambda(n)) \cong 
\H^{m-2n}_c(X \setminus Y, (C_{\ast}z_{equi}(\A^n_k,0)_{\Lambda})_{cdh}).
\]
\end{prop}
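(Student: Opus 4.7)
The plan is to reduce the first isomorphism to a matching of two split short exact sequences: one in the relative motivic cohomology of the pair $(S_X, X_-)$ and one in the $cdh$-hypercohomology of the sheaf $\nabla_*\Lambda_{S_X}(n)[2n]$ on $X$. First I would invoke the observation, already recorded in the paragraph just before the statement, that because \eqref{eqn:rel-et-2} is a $cdh$-square the map $\iota_+$ induces an isomorphism $\iota_+^* : H^m(S_X|X_-, \Lambda(n)) \xrightarrow{\cong} H^m(X|Y, \Lambda(n))$. This reduces the first part of the proposition to identifying $H^m(S_X|X_-, \Lambda(n))$ with $\H^{m-2n}_{cdh}(X, \Lambda_{X|Y}(n)[2n])$.

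Next, since $\nabla \circ \iota_- = {\rm id}_X$, the pullback $\iota_-^*$ is split by $\nabla^*$. Applying \propref{prop:DMrep} to the cofiber sequence $\Sigma^{\infty}_T(X_-)_+ \to \Sigma^{\infty}_T(S_X)_+ \to \Sigma^{\infty}_T M_{\iota_-}$ therefore yields a split short exact sequence
\[
0 \to H^m(S_X|X_-, \Lambda(n)) \to H^m(S_X, \Lambda(n)) \xrightarrow{\iota_-^*} H^m(X, \Lambda(n)) \to 0,
\]
which identifies $H^m(S_X|X_-, \Lambda(n))$ with $\ker(\iota_-^*)$. On the cycle side, \lemref{lem:hyper-0} together with \eqref{eqn:nabla*-0} rewrites $H^m(S_X, \Lambda(n))$ as $\H^{m-2n}_{cdh}(X, \nabla_*\Lambda_{S_X}(n)[2n])$, and $\iota_-^*$ is then induced by the surjection of $cdh$-sheaves occurring in \eqref{eqn:nabla*}. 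Since \eqref{eqn:nabla*} is already a split short exact sequence of sheaves, passage to hypercohomology commutes with taking kernels, giving $\ker(\iota_-^*) \cong \H^{m-2n}_{cdh}(X, \Lambda_{X|Y}(n)[2n])$. Concatenating with $\iota_+^*$ produces the first isomorphism.

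For the projective case, I would apply \propref{rep.motcohcs} to the closed immersion $Y \inj X$ with $U = X \setminus Y$, which gives $H^m(X|Y, \Lambda(n)) \cong H^m_c(X \setminus Y, \Lambda(n))$. From the standard description of $M^c(X\setminus Y)$ in $\DM_{cdh}$ via equidimensional cycles, e.g.\ \cite[Proposition~4.1.5]{Voev-1} combined with the equivalence $\mathbf L \pi^*$ of \cite[Proposition~8.1(c)]{CD-1}, this translates directly into $\H^{m-2n}_c(X \setminus Y, (C_*z_{equi}(\A^n_k,0)_\Lambda)_{cdh})$, yielding the second formula.

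The step I expect to be the main obstacle is the compatibility check that, under the isomorphism $H^m(S_X, \Lambda(n)) \cong \H^{m-2n}_{cdh}(X, \nabla_*\Lambda_{S_X}(n)[2n])$, the pullback $\iota_-^*$ in motivic cohomology really corresponds to the sheaf-theoretic map $\nabla_*\Lambda_{S_X}(n)[2n] \to \Lambda_X(n)[2n]$ of \eqref{eqn:nabla*}. This is a naturality statement for the comparison between $H\Lambda$-cohomology in $\stablehomotopycdh$ and the $cdh$-hypercohomology of $C_*z_{equi}(\A^n_k,0)$, applied to the section $\iota_-$ of $\nabla$; once this diagrammatic check is in place, the remaining manipulations are purely formal.
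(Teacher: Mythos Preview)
Your proposal is correct and follows essentially the same route as the paper: the paper derives the first isomorphism directly from the isomorphism $\iota_+^*: H^m(S_X|X_-,\Lambda(n)) \xrightarrow{\cong} H^m(X|Y,\Lambda(n))$ coming from the $cdh$-square~\eqref{eqn:rel-et-2}, together with \eqref{eqn:nabla*-0} and the split exact sequence~\eqref{eqn:nabla*}, and for the projective case it relies on \propref{rep.motcohcs}. You have spelled out the intermediate step (identifying $H^m(S_X|X_-,\Lambda(n))$ with the kernel of $\iota_-^*$ via the split cofiber sequence) and flagged the naturality check more explicitly than the paper does, but the argument is the same.
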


\section{Slice spectral sequence for relative $KH$-theory}
\label{sec:SSS}
Let $k$ be a perfect field of exponential characteristic $p$.
Given $X \in \Sch_k$, recall that Voevodsky's slice filtration of
$\stablehomotopy _{X}$ is given as follows.
For an integer $q \in \Z$, let $\Sigma^{q}_{T}\stablehomotopy _{X}^{\fnteff}$ 
denote the smallest full triangulated subcategory of $\stablehomotopy _{X}$ 
which contains $C^{q}_{\fnteff}$ and is closed under arbitrary coproducts, where
\begin{equation}\label{eq:effcat}
C^{q}_{\fnteff}=\{ \Sigma ^{m,n}\Sigma _T ^{\infty}Y_{+}: m\in \mathbb Z, 
n\geq q, Y\in \Sm_X\}.  
\end{equation}

In particular, $\stablehomotopy _{X}^{\fnteff}$ is the smallest 
full triangulated subcategory of $\stablehomotopy _{X}$ which is closed
under infinite direct sums and contains all spectra of the type
$\Sigma^{\infty}_TY_+$ with $Y \in \Sm_X$.
The slice filtration of $\stablehomotopy _{X}$ (see \cite{Voev-2}) is the
sequence of full triangulated subcategories
\[ 
\cdots \subseteq \susp{q+1}_{T}\stablehomotopy _{X}^{\fnteff} \subseteq
\susp{q}_{T}\stablehomotopy _{X}^{\fnteff} \subseteq \susp{q-1}_{T}
\stablehomotopy _{X}^{\fnteff} \subseteq \cdots
\]

It is known \cite{Pelaez-3} that the 
inclusion $i_{q}:\susp{q}_{T}\stablehomotopy _{X}^{\fnteff}\rightarrow 
\stablehomotopy_X$ admits a right adjoint 
$r_{q}:\stablehomotopy_X \rightarrow \susp{q}_{T}\stablehomotopy _{X}^{\fnteff}$
and that the functors 
$f_{q}, s_{<q}, s_{q}:\stablehomotopy_X \rightarrow \stablehomotopy_X$
are triangulated; where $r_q \circ i_q$ is the identity,
$f_{q}=i_{q}\circ r_{q}$ and $s_{<q}$, $s_{q}$ are 
characterized by the existence of the following distinguished triangles in 
$\stablehomotopy _{X}$:
\begin{equation}\label{eqn:Slice-triangle}
\xymatrix@R=0.8pc{f_{q}E\ar[r] & E \ar[r]& s_{<q}E \ \ \ \mbox{and}  & 
			f_{q+1}E\ar[r]& f_{q}E \ar[r]& s_{q}E}
\end{equation}
for every $E\in \stablehomotopy _{X}$.

\begin{defn}  \label{def.indfil}
Let $a$, $b$, $n \in \mathbb Z$ and $Y\in \Sm_X$.
Let $F^{n}E^{a,b}(Y)$ be the image of the map induced by $f_{n}E\rightarrow E$
\eqref{eqn:Slice-triangle}:
$\Hom _{\stablehomotopy _X}(\Sigma _T ^\infty Y_+, \Sigma ^{a,b}f_n E)\rightarrow
\Hom _{\stablehomotopy _X}(\Sigma _T ^\infty Y_+, \Sigma ^{a,b}E)$.  This 
determines a
decreasing filtration $F^{\bullet}$ on $E^{a,b}(Y)=\Hom _{\stablehomotopy _X}
(\Sigma _T ^\infty Y_+, \Sigma ^{a,b}E)$, and we will write $gr^{n}F^\bullet$ for 
the associated graded pieces
$\{F^{n}E^{a,b}(Y)/F^{n+1}E^{a,b}(Y)\}$.
\end{defn}

\subsection{The slice spectral sequence}\label{sec:SS-Conv}
Let $Y\in \Sm_X$ be a smooth $X$-scheme and $G \in \stablehomotopy _{X}$.  
Since $\stablehomotopy _{X}$ is a triangulated category, the collection of 
distinguished triangles
$\{ f_{q+1}G\rightarrow f_{q}G\rightarrow s_{q}G\}_{q\in \mathbb Z}$ determines a 
(slice) spectral sequence of the form
$E_1^{p,q}=\Hom _{\stablehomotopy _{X}}(\Sigma _T ^{\infty}
Y_+,\Sigma _s ^{p+q}s_p G)$ 
with $G^{m, n}(Y)=
\Hom _{\stablehomotopy _{X}}(\Sigma^{\infty}_TY_{+},\Sigma^{m,n}  G)$ 
as its abutment and differentials 
$d_r: E_r^{p,q}\rightarrow E_r^{p+r, q-r+1}$.

In order to study the convergence of this spectral sequence,
recall from \cite[p.~22]{Voev-2} that $G\in \stablehomotopy _{X}$ is called 
{\sl bounded} with respect to the slice filtration if for every 
$m$, $n\in \mathbb Z$ and  every $Y\in \Sm_X$, there exists 
$q\in \mathbb Z$ such that:
\begin{equation}  \label{eqn:bound}
\Hom _{\stablehomotopy _{X}}(\susp{m,n}\Sigma^{\infty}_TY_{+}, f_{q+i}G)=0
\end{equation}
for every $i>0$.  Clearly the slice spectral sequence 
is strongly convergent when $G$ is bounded.

\subsection{The slice spectral sequence for $\KGL$}\label{sec:KH-theory}
Let $k$ be a perfect field of exponential characteristic $p$, and let
$f: Y \to X$ be a morphism in $\Sch _k$.
Let $v : X \to \Spec(k)$ be the structure map.
Recall that $\KGL$ is by definition $\mathbf L \pi ^{\ast} \KGL _k$, and
the map $\mathbf L v^*(\KGL_k) \to \KGL_X$ is an isomorphism
by \cite[Proposition~3.8]{Cisinski}. 

By \cite[6.2.3.10, 5.3.18 and 5.3.10]{Riou} we have the
Chern character isomorphism
$ch: (\KGL_k)_{\Q} \xrightarrow{\cong} \oplus_{q\in \Z} \Sigma _T^q H\Q$,
and by \cite[5.3.17 and 5.3.10]{Riou}
$\Sigma _T^q H\Q \cong (\KGL _k)_{\mathbb Q}^{(q)}$ where
$(\KGL _k)_{\mathbb Q}^{(q)}$ is the $q$th Adams eigen-spectrum
constructed in \cite[5.3.9]{Riou}.  In addition, one knows that 
$\phi_r:s_r \KGL_k \xrightarrow{\cong} \Sigma^r_T H\Z$ for $r \in \Z$ 
\cite[Theorem~6.4.2]{Levine-3}, which implies 
$s_i(\Sigma^r_T H\Z)=0$ for $i\neq r$ and
$s_r(\Sigma^r_T H\Z)\cong \Sigma^r_T H\Z$.  Now,  since the effective
categories $\Sigma^{q}_{T}\stablehomotopy ^{\fnteff}$
are closed under arbitrary coproducts it follows
that the slices $s_i$ commute with arbitrary coproducts.  Hence,
we obtain the following isomorphism: 
\[
s_i(ch): s_i(\KGL_k)_{\Q} \xrightarrow{\cong} s_i(\oplus_{q\in \Z} \Sigma _T^q H\Q)\cong \oplus_{q\in \Z} s_i\Sigma _T^q H\Q
\cong  \Sigma _T^i H\Q \cong (\KGL _k)_{\mathbb Q}^{(i)}.
\]

Thus, we conclude that the Chern character isomorphism
splits the slice filtration:
\begin{equation}\label{eqn:Chern-iso}
{\rm ch}: (\KGL_k)_{\Q} \xrightarrow{\cong} \oplus_{q\in \Z} 
s_q(\KGL_k)_{\Q}.
\end{equation}
in $\stablehomotopy$.
For $q \in \Z$, we 
\nolinebreak
let
\begin{equation}\label{eqn:layer}
\begin{array}{lll} 
\Z_{\rm sl}(M_f, q): & = & \mathbf R\Gamma _k \circ \mathbf{RHom} 
(\Sigma _{S^1}^{\infty}M_f, 
\mathbf R\Omega _{t}^{\infty} \Sigma^q_T H\Z) \in \mathcal H (S ^1).
\end{array}
\end{equation}

If $f$ is a closed (resp. open) immersion, we write $\Z_{\rm sl}(M_f, q)$ as
$\Z_{\rm sl}(X|Y, q)$ (resp. $\Z_{\rm sl}^Y(X, q))$.

Now, by constructions \ref{def.relKH}\eqref{def.relKH.b} and 
\eqref{dg.map.cofs}, there is a commutative diagram in
$\Tspectracdh$:
\begin{align}  \label{dg.Tcofseq}
\begin{array}{c}
\xymatrix@R=1pc{\Sigma _T^{\infty}Y_+ \ar[r] \ar[d]_-f & 
\Sigma _T^{\infty}A_f \ar[r]
	\ar[dl]^-{w_f}& \Sigma _T^{\infty}M_f \\
	\Sigma _T^{\infty}X_+ &&} 
\end{array}
\end{align}
where the top row is a cofiber sequence.  Since $w_f:A_f\rightarrow X_+$ is a 
weak equivalence of motivic spaces, the map
$w_f:\Sigma _T ^{\infty}A_f\rightarrow \Sigma _T ^{\infty}X_+$ 
is a stable 
weak equivalence in $\Tspectracdh$ (e.g., see \cite[page~592]{Voev-0}).
Hence, mapping in $\stablehomotopycdh$ \eqref{dg.Tcofseq} into the slice 
tower of $\KGL$:
\[ \cdots \rightarrow \mathbf L \pi ^{\ast}(f_{q+1}\KGL_k) \rightarrow 
\mathbf L \pi ^{\ast}(f_q\KGL_k) \rightarrow \cdots \rightarrow \mathbf L 
\pi ^{\ast}(\KGL_k) = \KGL
\]
and splicing together \ref{lm.secpres} and \ref{cor.hfibident},
we obtain the following result.

\begin{thm}\label{thm:KH-SS*}
Let $k$ be a perfect field, and let $f:Y\rightarrow X$ be a morphism 
in $\Sch _k$. Then there exists in $\mathcal H(S^1)$,
a tower, natural in $(X,Y)$: 
\begin{equation}\label{eqn:KH-SS-0}
\cdots \to \phi_{q+1} KH(f) \to \phi_{q} KH(f) \to \cdots \to
\phi_0 KH(f) \to \phi_{-1} KH(f) \to \cdots \to KH(f)
\end{equation}
and an isomorphism for each $q \in \Z$:
\begin{equation}\label{eqn:KH-SS-1}
{\phi_{q}}/{\phi_{q+1}} KH(f)_{\Lambda} \cong
\Lambda_{\rm sl}(M_f, q).
\end{equation}
where ${\phi_{q}}/{\phi_{q+1}} KH(f)$ is the cofiber in $\mathcal H (S ^1)$
of the map $\phi _{q+1}KH(f)\rightarrow \phi _{q}KH(f)$.
\end{thm}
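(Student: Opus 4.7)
The plan is to pull Voevodsky's slice tower of $\KGL_k$ back from $\stablehomotopy$ to $\stablehomotopycdh$ along $\mathbf L \pi^*$, map the $T$-suspension of the mapping cone $M_f$ into the resulting tower, and then take stable global sections via $\mathbf R \Gamma_k \circ \mathbf{RHom}(\Sigma_{S^1}^\infty M_f, \mathbf R \Omega_t^\infty(-))$. Since $\mathbf L \pi^*$ is a triangulated left adjoint and $\KGL = \mathbf L \pi^* \KGL_k$, applying it to the slice tower
\[
\cdots \to f_{q+1}\KGL_k \to f_q \KGL_k \to \cdots \to \KGL_k
\]
yields, termwise, a tower in $\stablehomotopycdh$ whose cofibers are $\mathbf L \pi^* s_q \KGL_k$ and whose terminal stage is $\KGL$.

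Next, I would set
\[
\phi_q KH(f) := \mathbf R \Gamma_k \circ \mathbf{RHom}\bigl(\Sigma_{S^1}^\infty M_f,\, \mathbf R \Omega_t^\infty \mathbf L \pi^* f_q \KGL_k\bigr)
\]
in $\mathcal H(S^1)$. The structure maps $f_{q+1}\KGL_k \to f_q \KGL_k$ and $f_q \KGL_k \to \KGL_k$ then induce the tower \eqref{eqn:KH-SS-0} and its terminal map, while the identification of the terminal stage with $KH(f)$ is exactly Corollary~\ref{cor.hfibident}. Naturality in $(X, Y)$ is inherited from the naturality of $f \mapsto M_f$ recorded in \eqref{dg.map.cofs}, combined with the functoriality of the Quillen adjunctions \eqref{eq.s1Tadj} and \eqref{eq.globalsec} and of $\mathbf L \pi^*$.

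For the layer identification \eqref{eqn:KH-SS-1}, each of $\mathbf R \Gamma_k$, $\mathbf{RHom}(\Sigma_{S^1}^\infty M_f, -)$, $\mathbf R \Omega_t^\infty$, and $\mathbf L \pi^*$ is triangulated, so the cofiber of $\phi_{q+1} KH(f) \to \phi_q KH(f)$ is canonically isomorphic to
\[
\mathbf R \Gamma_k \circ \mathbf{RHom}\bigl(\Sigma_{S^1}^\infty M_f,\, \mathbf R \Omega_t^\infty \mathbf L \pi^* s_q \KGL_k\bigr).
\]
Tensoring with $\Lambda$ and invoking Levine's slice identification $s_q \KGL_k \cong \Sigma_T^q H\Z$ (recalled just above the theorem), together with the representability \eqref{eq.motcplx} of cdh motivic cohomology by $\mathbf L \pi^* H\Lambda$, one recovers $\Lambda_{\rm sl}(M_f, q)$ as introduced in \eqref{eqn:layer}.

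The argument is essentially a formal splicing of Lemma~\ref{lm.secpres}, Corollary~\ref{cor.hfibident}, and the slice theorem of \cite{Levine-3}. The one place requiring care, rather than a deep obstacle, is tracking the coefficient ring: Levine's identification and the comparison of $H\Z$ under $\mathbf L \pi^*$ hold integrally when $k$ admits resolution of singularities, but in positive characteristic the Friedlander--Voevodsky and cdh-descent comparisons force the restriction to $\Lambda = \Z[\tfrac{1}{p}]$ on the layers, whereas the tower itself is produced integrally.
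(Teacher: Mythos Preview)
Your proposal is correct and follows essentially the same route as the paper: define $\phi_q KH(f)$ by mapping $\Sigma_{S^1}^\infty M_f$ into $\mathbf R\Omega_t^\infty \mathbf L\pi^* f_q\KGL_k$, invoke Corollary~\ref{cor.hfibident} for the terminal stage, and identify the layers via Levine's computation $s_q\KGL_k\cong\Sigma_T^q H\Z$ together with the $\Lambda$-coefficient comparison in positive characteristic. The only cosmetic differences are that the paper cites \cite[Lemma~4.8]{KP} explicitly for the cdh slice identification with $\Lambda$-coefficients (rather than appealing loosely to Friedlander--Voevodsky and cdh descent), and your reference to \eqref{eq.motcplx} is not actually needed since the definition \eqref{eqn:layer} already lives in the cdh setting.
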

\begin{proof}
If we let 
$\phi_{q} KH(f) = \mathbf R\Gamma _k \circ \mathbf{RHom} 
(\Sigma _{S^1}^{\infty}M_f, 
\mathbf R\Omega _{t}^{\infty} \mathbf L \pi ^{\ast}(f_q\KGL_k))$, 
we only need to verify that the
cofiber of the map $\phi_{q+1} KH(f)_{\Lambda} \to \phi_{q} KH(f)_{\Lambda}$
is isomorphic to $\Lambda_{\rm sl}(M_f, q)$. But this follows from the fact that
there is an isomorphism $\Sigma^q_TH\Z \cong {f_q\KGL_k}/{f_{q+1}\KGL_k}$
in $\sS\sH$ (e.g., see \cite[\S~11.3, Remark~11.3.4]{Levine-3}).
Moreover, the same holds for $\KGL$ in $\stablehomotopycdh$ with 
$\Lambda$-coefficients
in positive characteristic by \cite[Lemma~4.8]{KP}.
\end{proof}

Using \thmref{thm:KH-SS*} together with the Quillen adjunctions of
\eqref{eq.s1Tadj} and \eqref{eq.globalsec}, we get our final result:

\begin{thm}\label{thm:KH-SS}
Let $k$ be a perfect field and let $f:Y\rightarrow X$ be a morphism
in $\Sch _k$.
Then there is a commutative diagram of
strongly convergent spectral sequences:
\[\xymatrix@R=1pc{\vdots \ar[d] & \vdots \ar[d]\\
	E^{a,b}_2 = H^{a-b}(M_f, \Lambda(-b))
\ar@{=>}[r]  \ar[d]& KH_{-a-b}(f)_{\Lambda} \ar[d]\\
	E^{a,b}_2 = H^{a-b}(X, \Lambda(-b)) 
\ar@{=>}[r] \ar[d]^-{f^\ast} & KH_{-a-b}(X)_\Lambda \ar[d]^-{f^\ast}\\ 
	E^{a,b}_2 = H^{a-b}(Y, \Lambda(-b))
\ar@{=>}[r] \ar[d]& KH_{-a-b}(Y)_{\Lambda} \ar[d] \\
	E^{a+1,b}_2 = H^{a-b+1}(M_f, \Lambda(-b))
\ar@{=>}[r] \ar[d]& KH_{-a-b-1}(f)_{\Lambda} \ar[d] \\
\vdots  & \vdots}
\]
with exact columns, 
where the differentials of the spectral sequence are given by
$d_r: E^{a,b}_r \to E^{a+r, b-r+1}_r$; and for every $a$, $b\in \mathbb Z$,
there exists $N>0$ such that $E^{a,b}_r=E^{a,b}_\infty$ for $r\geq N$,
where $E^{a,b}_\infty$
is the associated graded $gr^{-b}F^{\bullet}$ with respect
to the descending filtration \ref{def.indfil} on the groups in the right 
column.
Furthermore, these spectral sequences degenerate with rational coefficients.
\end{thm}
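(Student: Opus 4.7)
My plan is to derive the displayed spectral sequence by taking homotopy groups of the slice tower of \thmref{thm:KH-SS*}, identify its layers with relative motivic cohomology via the adjunctions already set up, verify strong convergence via slice-boundedness, propagate the cofiber sequence $\Sigma^\infty_T Y_+\to\Sigma^\infty_T X_+\to\Sigma^\infty_T M_f$ through the construction to produce the exact columns, and finally deduce rational degeneration from the Chern character splitting \eqref{eqn:Chern-iso}.

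First, I would apply $\pi_\ast$ to the tower of $S^1$-spectra \eqref{eqn:KH-SS-0} and interlace it with the defining cofiber sequences $\phi_{q+1}KH(f)_\Lambda \to \phi_q KH(f)_\Lambda \to \Lambda_{\rm sl}(M_f,q)$. This produces an unrolled exact couple in $\mathcal{H}(S^1)$ whose associated spectral sequence abuts to $KH_\ast(f)_\Lambda$ with the decreasing filtration $F^\bullet$ of \ref{def.indfil} (transported across \eqref{eq.s1Tadj}--\eqref{eq.globalsec}). Combining the Quillen adjunctions \eqref{eq.s1Tadj} and \eqref{eq.globalsec} with \eqref{eq.motcplx} and the definition \eqref{eqn:RMC-1-0}, the homotopy groups of the layers are
\[
\pi_n\,\Lambda_{\rm sl}(M_f,q) \;\cong\; \Hom_{\stablehomotopycdh}\bigl(\Sigma^\infty_T M_f,\, \Sigma^{2q-n,q}\mathbf L\pi^{\ast}H\Lambda\bigr) \;=\; H^{2q-n}(M_f,\Lambda(q)).
\]
Substituting $q=-b$ and $n=-a-b$ gives the asserted $E_2^{a,b}=H^{a-b}(M_f,\Lambda(-b))$; that this identification already takes place at the $E_2$-level (rather than only $E_1$) is the standard phenomenon for the slice spectral sequence of $\KGL$, since the $d_1$'s coming from \eqref{eqn:Slice-triangle} land between motivic cohomology groups of the same weight.

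Next comes strong convergence, which I expect to be the technical heart of the argument. By \eqref{eqn:bound} it reduces to the boundedness of $\Sigma^\infty_T M_f \in \stablehomotopycdh$ with respect to the slice filtration of $\KGL$. I would deduce this from the known slice-boundedness of $\KGL_k$ on smooth Nisnevich objects (via Levine's comparison of the slice and homotopy coniveau towers, already invoked implicitly by \thmref{thm:KH-SS*}) by combining the $cdh$-descent equivalence $\mathbf L\pi^{\ast}:\mathrm{DM}(k,\Lambda)\xrightarrow{\cong}\mathrm{DM}_{cdh}(k,\Lambda)$ with the comparison of \cite[Corollary~2.17]{KP}, and then applying two-out-of-three along $\Sigma^\infty_T Y_+\to\Sigma^\infty_T A_f\to\Sigma^\infty_T M_f$: boundedness for $X_+$ and $Y_+$ transfers to $M_f$. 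The nontrivial point here is that $X$ and $Y$ are only assumed to lie in $\Sch_k$, so one must go through the $cdh$-comparison rather than working in $\stablehomotopy$ directly.

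For the exact columns, I would apply the slice tower of $\KGL$ to the distinguished triangle $\Sigma^\infty_T Y_+\to\Sigma^\infty_T X_+\to\Sigma^\infty_T M_f$ in $\stablehomotopycdh$ and map it into each $\Sigma^q_T H\Lambda$; at every slice level this yields, by \lemref{lm.secpres} and \corref{cor.hfibident}, a distinguished triangle of $S^1$-spectra whose spectral sequences fit together, via the naturality already recorded in \eqref{dg.natrelKH}, into the three short exact sequences of spectral sequences comprising the displayed column (the long exact sequence is obtained by concatenating these and using the cofiber-shift to connect $M_f$ in row $a$ with $M_f$ in row $a+1$). Finally, rational degeneration is immediate from \eqref{eqn:Chern-iso}: the splitting $(\KGL_k)_{\mathbb Q}\cong\bigoplus_q\Sigma^q_T H\mathbb Q$ in $\stablehomotopy$ pulls back along $\mathbf L\pi^{\ast}$ to a splitting of the rationalized slice tower in $\stablehomotopycdh$, so after $\otimes\mathbb Q$ the tower in $\mathcal H(S^1)$ becomes a product of its layers, forcing every higher differential to vanish.
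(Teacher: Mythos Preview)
Your overall architecture matches the paper's: build the spectral sequence from the tower of \thmref{thm:KH-SS*} via the adjunctions \eqref{eq.s1Tadj}--\eqref{eq.globalsec}, identify the layers with relative motivic cohomology, get the exact columns from the cofiber sequence $\Sigma^\infty_T Y_+\to\Sigma^\infty_T X_+\to\Sigma^\infty_T M_f$, establish strong convergence by a two-out-of-three on boundedness, and read off rational degeneration from \eqref{eqn:Chern-iso}. Two points deserve correction, however.

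First, your explanation of why the $E_2$-indexing is correct is off. The passage from $E_1$ to $E_2$ is not because the $d_1$'s ``land between motivic cohomology groups of the same weight'' and hence vanish; rather it is a formal reindexing coming from the Bott periodicity $f_q\KGL_k\cong\Sigma_T f_{q-1}\KGL_k$, exactly as in the proof of \cite[Theorem~11.3.2]{Levine-3}, which the paper cites. No differential needs to vanish to get the $E_2$-description.

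Second, and more substantively, your route to strong convergence has a gap. You propose to deduce boundedness of $\KGL$ on singular $X_+,Y_+$ in $\stablehomotopycdh$ from the smooth Nisnevich case by invoking the equivalence $\mathbf L\pi^\ast:\DM(k,\Lambda)\xrightarrow{\cong}\DM_{cdh}(k,\Lambda)$ together with \cite[Corollary~2.17]{KP}. But those results compare maps into $H\Lambda$, not into the truncated spectra $f_q\KGL$; the $\DM$-equivalence gives you no direct control over $\Hom_{\stablehomotopycdh}(\Sigma^\infty_T X_+,\mathbf L\pi^\ast f_q\KGL_k)$. The paper instead appeals to \cite[4.7, 4.10]{KP}, which establish directly that $\KGL[\tfrac{1}{p}]$ is bounded with respect to the slice filtration in $\stablehomotopycdh$, i.e., the vanishing \eqref{eqn:bound} holds for arbitrary $X\in\Sch_k$. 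Once that is in hand, your two-out-of-three step (choosing $q$ large enough to work simultaneously for $X$ in degree $(m,n)$ and for $Y$ in degree $(m+1,n)$, then using the long exact sequence from the cofiber triangle) is exactly what the paper does.
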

\begin{proof}
Except for the strong convergence, all the claims follow from 
\thmref{thm:KH-SS*}, \eqref{eq.s1Tadj} and \eqref{eq.globalsec} and an 
elementary reindexing to convert the resulting $E_1$-spectral
sequence into an $E_2$-spectral sequence (see the proof of
\cite[Theorem~11.3.2]{Levine-3}).
Alternatively, one can use 
\ref{lm.secpres} and \corref{cor.hfibident} and the 
method of \cite[4.27]{KP}) to prove the existence of the spectral sequences.

To prove the strong convergence, we recall from \cite[4.7 and 4.10]{KP} that
$\KGL[\tfrac{1}{p}]$ is bounded with respect to the slice filtration 
\eqref{eqn:bound}.
Let $m$, $n\in \mathbb Z$ be arbitrary integers, and $q_1$, $q_2\in \mathbb Z$
such that the vanishing condition \eqref{eqn:bound} holds for $(m,n)$ and 
$(m+1,n)$
respectively.  Then, if $q$ is the maximum of $q_1$, $q_2$, we deduce by 
\cite[Theorem~2.14]{KP} and \eqref{eqn:bound} that for every $i>0$, there is 
an exact sequence:
\[ 
\xymatrix@R=1pc@C=1pc{\Hom _{\stablehomotopy _{cdh}}(\susp{m+1,n}
\Sigma^{\infty}_T
Y_{+}, f_{q+i}\KGL[\tfrac{1}{p}]) \ar[r] & \Hom _{\stablehomotopy _{cdh}}(\susp{m,n}
\Sigma^{\infty}_T M_f, f_{q+i}\KGL[\tfrac{1}{p}]) \ar[d] \\
& \Hom _{\stablehomotopy _{cdh}}(\susp{m,n}\Sigma^{\infty}_TX_{+}, 
f_{q+i}\KGL[\tfrac{1}{p}]),}
\]
where the terms on the left and bottom vanish, so the term in the middle also 
vanishes.
This implies the desired convergence abutting to
$KH_\ast (f)_{\Lambda}$.
\end{proof}

\begin{remk}  \label{rm.sscsup}
With the notation of Definition~\ref{def.relKH}, we have the following.
\begin{enumerate}
\item \label{rm.sscsup.a} If $f$ is an open immersion, then 
Corollary~\ref{cor.hfibident} and \propref{rep.motcohcs} together imply that
the top row of \ref{thm:KH-SS} is same as (where $Z=X\backslash Y$):
\begin{align} \label{eq.sssup}
E_2^{a,b}=H^{a-b}_Z(X, \Lambda(-b)) \Rightarrow KH^Z_{-a-b}(X)_{\Lambda}.
\end{align}
\item \label{rm.sscsup.b} If $f$ is a closed immersion, then 
Corollary~\ref{cor.hfibident} and  \ref{eqn:RMC-1-0}-\ref{thm:cdhdescent} 
together imply that
the top row of \ref{thm:KH-SS} is same as
\[ E_2^{a,b} = H^{a-b}(X|Y, \Lambda(-b)) \Rightarrow KH_{-a-b}(X, Y)_{\Lambda}.
\]
If in addition $X$ is assumed to be proper, \corref{cor.hfibident} 
and \propref{rep.motcohcs} together imply that the top row of 
\thmref{thm:KH-SS} 
is same as
\begin{align} \label{ss.csupp}
E_2^{a,b}=H^{a-b}_c(X \setminus Y,\Lambda(-b)) 
\Rightarrow KH_{-a-b}(X,Y)_{\Lambda}.
\end{align}
\end{enumerate}
\end{remk}

As a combination of ~\thmref{thm:KH-SS} and \cite[Theorem~9.5, 9.6]{TT}, 
we obtain the following result for the Thomason-Trobaugh relative algebraic 
$K$-theory $K(-)$ of singular schemes \cite{TT}.

\begin{cor}\label{cor:Alg-K-SS}
Let $k$ be a perfect field of exponential characteristic $p$. Let
$\ell \neq p$ be a prime and $m \ge 0$ any integer. 
Given any $f: Y \to X$ in $\Sch_k$, there exists a strongly convergent 
spectral sequence
\begin{equation}\label{eqn:K-SS-1}
E^{a,b}_2 = H^{a-b}(M_f, \Z/{\ell^m} (-b))
\Rightarrow K/{\ell^m}_{-a-b}(f).
\end{equation}

If $p > 0$, there exists a strongly convergent spectral sequence
\begin{equation}\label{eqn:K-SS-0}
E^{a,b}_2 = H^{a-b}(M_f, \Z[\tfrac{1}{p}](-b))
\Rightarrow K_{-a-b}(f)[\tfrac{1}{p}].
\end{equation}
\end{cor}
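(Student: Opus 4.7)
The plan is to derive both spectral sequences from \thmref{thm:KH-SS} by using the Thomason-Trobaugh comparison results \cite[Theorems~9.5, 9.6]{TT} to convert the abutment from homotopy invariant $K$-theory back to Thomason-Trobaugh algebraic $K$-theory. First I would recall that the natural map of presheaves of spectra $K \to KH$ induces a map $K(f) \to KH(f)$ on defining homotopy fibers, compatible with the fiber sequences $K(f) \to K(X) \to K(Y)$ and $KH(f) \to KH(X) \to KH(Y)$. The cited results assert that $K(Z)/\ell^m \xrightarrow{\simeq} KH(Z)/\ell^m$ is an equivalence for $\ell$ invertible on $Z \in \Sch_k$, and (when $p > 0$) that $K(Z)[\tfrac{1}{p}] \xrightarrow{\simeq} KH(Z)[\tfrac{1}{p}]$ is an equivalence. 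A five-lemma argument in homotopy groups then yields the corresponding identifications $K(f)/\ell^m \simeq KH(f)/\ell^m$ and $K(f)[\tfrac{1}{p}] \simeq KH(f)[\tfrac{1}{p}]$.

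For \eqref{eqn:K-SS-0}, I would apply \thmref{thm:KH-SS} with $\Lambda = \Z[\tfrac{1}{p}]$ (automatic in positive characteristic by our convention) and simply substitute $K_{-a-b}(f)[\tfrac{1}{p}]$ for $KH_{-a-b}(f)_{\Lambda}$ in the abutment via the comparison above. For \eqref{eqn:K-SS-1}, I would smash the tower \eqref{eqn:KH-SS-0} with the mod-$\ell^m$ Moore spectrum; this is permissible because $\ell \neq p$ makes $\Z/\ell^m$ a $\Z[\tfrac{1}{p}]$-module, so the resulting tower is obtained from the $\Z[\tfrac{1}{p}]$-tower of \thmref{thm:KH-SS} by tensoring. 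Using the universal coefficient short exact sequence together with the identification \eqref{eqn:KH-SS-1} of layers, one checks that the new layers compute $\Z/\ell^m$-valued motivic cohomology, yielding $E_2^{a,b} = H^{a-b}(M_f, \Z/\ell^m(-b))$. The Thomason-Trobaugh identification then converts the abutment into $K/\ell^m_{-a-b}(f)$.

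The main point requiring verification is the strong convergence after mod-$\ell^m$ reduction, but this is not a genuine obstacle: the convergence argument in \thmref{thm:KH-SS} rests on the boundedness condition \eqref{eqn:bound} for $\KGL[\tfrac{1}{p}]$ with respect to the slice filtration, and this vanishing persists after smashing with any Moore spectrum (the boundedness integer $q$ depending on $(m,n)$ and $(m+1,n)$ remains the same). Hence the same exact-sequence argument given at the end of the proof of \thmref{thm:KH-SS} applies verbatim with $\Z/\ell^m$-coefficients, yielding strong convergence of the resulting spectral sequence.
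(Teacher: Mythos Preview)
Your proposal is correct and follows exactly the approach the paper takes: the paper's proof is a one-line appeal to \thmref{thm:KH-SS} combined with \cite[Theorems~9.5, 9.6]{TT}, and you have simply spelled out the details of how that combination works (passing to mod-$\ell^m$ coefficients via the Moore spectrum, checking that boundedness and hence strong convergence persist, and using the five-lemma to transfer the Thomason--Trobaugh comparison from $X$ and $Y$ to $f$).
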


\subsection{Relative $K$-theory of smooth closed pair}
\label{sec:Rel-sm}
Our main example of interest is the relative algebraic $K$-theory
of the pair $(X,Y)$ where $Y \subset X$ is a codimension one
closed immersion of smooth schemes.
We now show that Theorems~\ref{thm:KH-SS*} and ~\ref{thm:KH-SS} 
can be much simplified to give a more precise description of
the tower of the relative $K$-theory and of the motivic cohomology in this
case.

In any case, it follows from the definition of the tower 
~\eqref{eqn:KH-SS-0} that there is a distinguished triangle in $\mathcal H (S^1)$:
\[
\phi_q KH(f) \to \phi_q KH(X) \xrightarrow{f^*} \phi_q KH(Y)
\to \phi_q KH(f)[1].
\]

Let us now assume that $f$ is a closed immersion of smooth schemes over $k$. 
In this case, we know that $K(f) \to KH(f)$ is a weak equivalence.
On the other hand, it follows from \cite[Theorem~9.0.3]{Levine-3} that
the slice tower $\phi_{\bullet} K(X)$ coincides with the homotopy coniveau
tower $\psi_{\bullet} K(X)$, where $\psi_q K(X)$ is the diagonal of the 
simplicial spectrum $K^{(q)}(X, -)$, defined in  \cite[2.1.2]{Levine-3}.
In particular, we get $\phi_q K(f) = \phi_q K(X,Y) = K(X,Y)$ for $q \le 0$.
Furthermore, it follows from \thmref{thm:KH-SS*} and
\cite[Theorem~6.4.2]{Levine-3} that
$\Z_{\rm sl}(X, q) \cong {\phi_q}/{\phi_{q+1}} K(X) \cong 
{\psi_q}/{\psi_{q+1}} K(X) \cong \Sigma^{\infty}_{S^1}z^q(X, \bullet)$, where 
$z^q(X, \bullet)$ is 
Bloch's cycle complex of $X$ \cite{Bloch}. The same holds for $Y$ as well.

We let $z^q_{\sM}(X|Y, \bullet) = {\rm Cone}(f^*)[-1]$, where
${\rm Cone}(f^*)$ denotes the cone of the restriction map of cycle
complexes $f^*: z^q_Y(X, \bullet) \to z^q(Y, \bullet)$. Recall here that
$z^q_Y(X, \bullet) \subset z^q(X, \bullet)$ is the subcomplex generated by
irreducible cycles which intersect all faces of $Y \times \Delta^{\bullet}$
properly. 
By \cite[Thm. 1.10]{Krishna-Levine} (which relies on
Bloch's moving \cite{Bloch-2} lemma) this inclusion
is a weak equivalence of simplicial abelian groups. 
Letting $\CH^q_{\sM}(X|Y, i) = H_i(z^q_{\sM}(X|Y, \bullet))$ and
$\CH^{*}_{\sM}(X|Y, i) = \oplus_{q \in \Z} \CH^q_{\sM}(X|Y, i)$,
we conclude that Theorem~\ref{thm:KH-SS*} reduces to 
the following.

\begin{thm}\label{thm:sm-pair}
Let $k$ be a perfect field and let $f: Y \inj X$ be a closed immersion in
$\Sm_k$. Let $(g, g'): 
(X', Y') \to (X,Y)$ be a projective morphism of closed pairs 
in $\Sm_k$ such that $Y' = Y \times_X X'$. Let $T_g \in K_0(Y')$ be the
virtual relative tangent bundle for $g:X'\rightarrow X$. 
Then there is a tower in $\mathcal H (S^1)$:
\[
\cdots \to \phi_{q+1} K(X,Y) \to \phi_{q} K(X,Y) \to \cdots \to
\phi_0 K(X,Y) = K(X,Y)
\]
and isomorphisms for each $q, i \ge 0$:
\begin{enumerate}
\item
${\phi_q}/{\phi_{q+1}} K(X, Y) \cong
\Sigma^{\infty}_{S^1}z^q_{\sM}(X|Y, \bullet)$ 
\item
$\CH^q_{\sM}(X|Y, i) \cong H^{2q-i}(X|Y, \Z(q))$.
\item
There exists a strongly convergent spectral sequence
\[
E^{a,b}_2 = \CH^{-b}_{\sM}(X|Y, -a-b) 
\Rightarrow KH_{-a-b}(X,Y).
\]
\item
Grothendieck Riemann-Roch theorem: there is a commutative diagram
\begin{equation}\label{eqn:RR-iso}
\xymatrix{
K_i(X', Y')_{\Q} \ar[rr]^-{{\rm Td}(T_g){\rm ch}} \ar[d]_{g_*} && \CH^{*}_{\sM}(X'|Y', i)_{\Q} 
\ar[d]^{g_\ast} \\
K_i(X, Y)_{\Q} \ar[rr]^-{{\rm ch}}  && \CH^{*}_{\sM}(X|Y, i)_{\Q},}
\end{equation}
such that the horizontal arrows are isomorphisms.
\end{enumerate}
\end{thm}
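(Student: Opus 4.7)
\medskip
\noindent\emph{Proof plan.} The plan is to obtain everything by specializing Theorem~\ref{thm:KH-SS*} and Theorem~\ref{thm:KH-SS} to the smooth closed pair $(X,Y)$ and then using Levine's comparison between the slice tower and the homotopy coniveau tower. First, since $Y\inj X$ is a closed immersion in $\Sm_k$, the natural map $K(X,Y)\to KH(X,Y)$ is a weak equivalence, so the tower $\phi_\bullet K(X,Y)$ may be defined as the homotopy fiber of $\phi_\bullet K(X)\xrightarrow{f^\ast}\phi_\bullet K(Y)$ provided by Theorem~\ref{thm:KH-SS*}. By \cite[Theorem~9.0.3]{Levine-3}, the slice tower on $K(X)$ (resp.\ $K(Y)$) agrees with Levine's homotopy coniveau tower $\psi_\bullet K(X)$ (resp.\ $\psi_\bullet K(Y)$); since $\psi_q K(-)=K(-)$ for $q\le 0$, a straightforward five-lemma argument on the homotopy fiber gives $\phi_0 K(X,Y)=K(X,Y)$ and yields the tower in the statement.

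For (1) and (2), I would use the map of distinguished triangles
\[
\xymatrix@R=1pc{
\phi_{q+1}K(X)\ar[r]\ar[d] & \phi_q K(X)\ar[r]\ar[d] & \Sigma^\infty_{S^1}z^q(X,\bullet)\ar[d]\\
\phi_{q+1}K(Y)\ar[r] & \phi_q K(Y)\ar[r] & \Sigma^\infty_{S^1}z^q(Y,\bullet)}
\]
coming from Levine's identification of the slice layer of $K$-theory with Bloch's cycle complex \cite[Theorem~6.4.2]{Levine-3}. Taking homotopy fibers of the three vertical arrows shows that ${\phi_q}/{\phi_{q+1}}K(X,Y)$ is the homotopy fiber of the restriction on cycle complexes. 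The issue is that $f^\ast$ is only defined on the subcomplex $z^q_Y(X,\bullet)$ of cycles meeting $Y\times\Delta^\bullet$ properly, but by the Krishna--Levine moving lemma \cite[Theorem~1.10]{Krishna-Levine} the inclusion $z^q_Y(X,\bullet)\inj z^q(X,\bullet)$ is a quasi-isomorphism, so the homotopy fiber is quasi-isomorphic to $z^q_{\sM}(X|Y,\bullet)$, giving (1). For (2), note that Theorem~\ref{thm:KH-SS*} and ~\eqref{eqn:layer} already identify ${\phi_q}/{\phi_{q+1}}KH(X,Y)$ with $\Z_{\rm sl}(X|Y,q)=\mathbf R\Gamma_k\circ\mathbf{RHom}(\Sigma^\infty_{S^1}M_f,\mathbf R\Omega^\infty_t\Sigma^q_TH\Z)$, whose $(-i)$-th homotopy group is $H^{2q-i}(X|Y,\Z(q))$ by the definition of relative motivic cohomology, while by (1) the same group equals $\CH^q_{\sM}(X|Y,i)$. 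Part (3) is then immediate from Theorem~\ref{thm:KH-SS} combined with (1) and (2): the spectral sequence, reindexed from $E_1$ to $E_2$ form, becomes precisely the one stated, and strong convergence follows from the boundedness of $\KGL$ with respect to the slice filtration, as in the proof of Theorem~\ref{thm:KH-SS}.

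For (4), the Grothendieck--Riemann--Roch statement, I would first invoke the classical GRR for the individual projective morphisms $g\colon X'\to X$ and $g|_{Y'}\colon Y'\to Y$; the Cartesian hypothesis $Y'=Y\times_X X'$ ensures the virtual relative tangent bundles are compatible, i.e., $T_{g|_{Y'}}=T_g|_{Y'}$ in $K_0(Y')$. This gives a commutative square of GRR isomorphisms between $K(X')_\Q, K(X)_\Q$ and their Chow counterparts, and a parallel square for $Y', Y$. Passing to homotopy fibers of $f^\ast$ on both the source ($K$-theory) and target (motivic cohomology) sides, and using (1)--(2) to identify the target with $\CH^\ast_{\sM}(X|Y,i)_\Q$, yields the claimed commutative diagram. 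The horizontal arrows are isomorphisms because the Chern character for individual smooth schemes is a rational isomorphism (the rational degeneration in Theorem~\ref{thm:KH-SS} gives the Chern character decomposition), and the five-lemma applied to the long exact sequences of the pairs transports this to the relative level.

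The main obstacle will be the GRR part (4): one has to verify with care that multiplication by $\mathrm{Td}(T_g)$ is well defined on the relative groups (which requires compatibility with the restriction along $f$ and the base-change identification $T_{g|_{Y'}}\cong T_g|_{Y'}$), and that the resulting square of pushforwards really induces a well-defined map of homotopy fibers. The rest of the argument is a reasonably formal consequence of Theorems~\ref{thm:KH-SS*}--\ref{thm:KH-SS}, Levine's slice$=$homotopy coniveau comparison, and the Krishna--Levine moving lemma.
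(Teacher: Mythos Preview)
Your treatment of the tower and of parts (1)--(3) matches the paper's: the paper also derives the tower from Theorem~\ref{thm:KH-SS*}, invokes Levine's slice\,$=$\,homotopy coniveau comparison \cite[Theorem~9.0.3]{Levine-3} to get $\phi_qK(X,Y)=K(X,Y)$ for $q\le 0$, identifies the layers via \cite[Theorem~6.4.2]{Levine-3} together with the Krishna--Levine moving lemma, and deduces (2) by comparing with the absolute case via the 5-lemma. Your phrasing of (2) (comparing the two descriptions of the layer directly) is a harmless reorganization of the same argument.

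For (4), however, your route differs from the paper's and has a genuine gap. You propose to deduce the relative Riemann--Roch square from the absolute GRR squares for $g\colon X'\to X$ and $g|_{Y'}\colon Y'\to Y$ separately, then ``pass to homotopy fibers.'' The problem is that commutativity of the two absolute GRR squares \emph{on homotopy groups} does not yield commutativity of the relative square: the map $\CH^{*}_{\sM}(X|Y,i)_\Q\to \CH^{*}(X,i)_\Q$ need not be injective, so equality after mapping to $\CH^{*}(X,i)_\Q$ does not force equality in $\CH^{*}_{\sM}(X|Y,i)_\Q$. To make your argument work you would need the absolute GRR to hold as a commuting square \emph{of spectra} (or in $\SH$), together with a coherent identification of the ${\rm Td}$-twist along the restriction $X'\to Y'$, so that taking fibers is legitimate. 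You correctly flag this as ``the main obstacle,'' but you do not resolve it.

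The paper sidesteps this entirely by citing Navarro's Riemann--Roch theorem \cite[Theorem~3.15]{Nav}, which gives the GRR diagram for lci morphisms directly at the level of the motivic stable homotopy category; since a projective morphism of smooth schemes is lci, this applies to $g$ and yields the commutativity of \eqref{eqn:RR-iso} in one stroke, via (2). For the claim that the horizontal arrows are isomorphisms, the paper argues that ${\rm ch}$ is an isomorphism by \eqref{eqn:Chern-iso}, and then that ${\rm Td}(T_g)\cdot{\rm ch}$ is too because ${\rm Td}(T_g)$ is a unit in $\CH^{*}_{\sM}(X',*)_\Q$ and $\CH^{*}_{\sM}(X'|Y',*)_\Q$ is a $\CH^{*}_{\sM}(X',*)_\Q$-module. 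Your five-lemma argument for the isomorphism also works, but the commutativity step needs the external input from \cite{Nav}.
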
 
\begin{proof}
The existence of the tower and (1) are already explained above.
The item (3) follows from (1) and (2). 
For (2), we note from what is explained above and the Quillen adjunctions
~\eqref{eq.s1Tadj} and ~\eqref{eq.globalsec} that we do have
natural maps 
\begin{equation}\label{eqn:cycl-map-3-high-0}
\CH^q_{\sM}(X|Y, i) \to 
\H^{-i}(X, z^q_{\sM}(-|Y, \bullet)|_{X_{zar}}) \to
H^{2q-i}(X|Y, \Z(q)).
\end{equation}

To show that the composite map is an isomorphism, we only have to observe that 
this is indeed the case if $Y = \emptyset$. We conclude 
using the 5-lemma. In the Riemann-Roch theorem (4), the
commutativity of the diagram follows from
the item (2) and \cite[Theorem~3.15]{Nav},
if we note that a projective morphism of smooth schemes is a local
complete intersection.  Finally $ch$ is an isomorphism by
\eqref{eqn:Chern-iso},   which also implies that
${\rm Td}(T_g){\rm ch}$ is an isomorphism since $Td(T_g)$
is a unit in $CH^\ast_{\mathcal M}(X',\ast)_{\mathbb Q}$ and
$CH^\ast_{\mathcal M}(X'|Y',\ast)_{\mathbb Q}$
has the structure of $CH^\ast_{\mathcal M}(X',\ast)_{\mathbb Q}$-module.
\end{proof}

\section{The cycle class and Chern class maps}\label{sec:Conn-K}
As an application of \thmref{thm:KH-SS}, we shall
now construct a cycle class map from the
relative motivic cohomology in the 0-cycle range 
to relative $KH$-theory. We shall then apply the double construction
to construct the Chern class maps.


\subsection{The cycle class map}\label{sec:CCM}
We continue with our assumption on the field $k$ and the coefficient ring 
$\Lambda$. In order to construct the cycle class map, we shall use the 
connective version of the spectrum $\KGL$.

Let $X \in \Sch_k$. Recall that the {\sl connective $K$-theory} spectrum
$\KGL^0_X$ is the motivic $T$-spectrum $f_0\KGL_X$
in $\stablehomotopy_X$ (see ~\eqref{eqn:Slice-triangle}).  
In particular, there is a canonical map $u_X: \KGL^0_X \to \KGL_X$
which is universal for morphisms from objects of 
$\stablehomotopy _{X}^{\fnteff}$ to $\KGL_X$. 
With the notation of Definition~\ref{def.relKH}, we let 
$CKH^{p,q}(M_f) = \Hom_{\stablehomotopycdh}(\Sigma^{\infty}_T M_f, 
\Sigma^{p,q} \mathbf L \pi ^{\ast}\KGL^0)$.

Apart from the connective cover of $\KGL$, we also need the following
vanishing result for the motivic cohomology with support.

\begin{lem}\label{lem:vanish-coh}
Let $X \in \Sch_k$ be of dimension $d$ and let
$Y \subseteq X$ be a closed subscheme. Then
$H^{2a-b}_Y(X, \Lambda(a)) = H^{2a-b}_c(X, \Lambda(a)) = 0$ whenever $a > d+b$.
\end{lem}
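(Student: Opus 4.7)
Setting $m := 2a-b$, the hypothesis $a > d+b$ translates into $m > a+d$, so my goal is to show $H^m_Y(X,\Lambda(a)) = H^m_c(X,\Lambda(a)) = 0$ whenever $m > a+d$.

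The plan is to bootstrap from the following absolute vanishing: for every $Z \in \Sch_k$ with $\dim Z \le d$ and every $m > a+d$,
\begin{equation*}
H^m(Z,\Lambda(a)) \;=\; 0.
\end{equation*}
For smooth $Z$ this is immediate from the comparison $H^m(Z,\Lambda(a)) \cong \CH^a(Z,2a-m)_\Lambda$ together with the tautological vanishing $\CH^a(Z,i) = 0$ for $i < a - \dim Z$ (codimension-$a$ cycles in $Z \times \Delta^i$ force $\dim Z + i \ge a$). For general $Z \in \Sch_k$, I would pass to a smooth proper cdh hypercover $Z_\bullet \to Z$ of dimension $\le d$ — available under our standing hypothesis on $\Lambda$ by Hironaka in characteristic zero and by Gabber's prime-to-$p$ alterations after inverting $p$ — and run the descent spectral sequence $E_1^{p,q} = H^q(Z_p,\Lambda(a)) \Rightarrow H^{p+q}(Z,\Lambda(a))$, every row of which with $q > a+d$ already vanishes.

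Next I would plug this input into the localization long exact sequence attached to the distinguished triangle $M(X \setminus Y) \to M(X) \to M_Y(X)$ in $\DM_{cdh}$. Since both $X$ and $X \setminus Y$ have dimension $\le d$, the absolute terms $H^m(X,\Lambda(a))$ and $H^{m-1}(X\setminus Y,\Lambda(a))$ vanish once $m \ge a+d+2$, immediately settling $H^m_Y(X,\Lambda(a)) = 0$ in that range. At the boundary $m = a+d+1$ the same sequence reduces the claim to the surjectivity of the restriction $H^{a+d}(X,\Lambda(a)) \twoheadrightarrow H^{a+d}(X\setminus Y,\Lambda(a))$; in the smooth case this is the tail of Bloch's localization sequence for higher Chow groups, with obstruction a quotient of $\CH_{d-a}(Y,a-d-1)_\Lambda$ — a group that is zero for elementary reasons (either $d-a < 0$, or else $a-d-1 < 0$) — and the singular case descends from this termwise along a smooth hypercover of the pair $(X,Y)$. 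The compactly-supported statement is handled by the same template after choosing a smooth proper compactification $\bar X \supset X$ with boundary $D = \bar X \setminus X$ of dimension $<d$: the triangle $M^c(D) \to M(\bar X) \to M^c(X)$ sandwiches $H^m_c(X,\Lambda(a))$ between $H^{m-1}_c(D,\Lambda(a))$ and $H^m(\bar X,\Lambda(a))$, and an induction on $\dim D$ (base case: $D$ zero-dimensional, where compact and ordinary motivic cohomology coincide) kills both sides.

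The main obstacle is the construction of the smooth hypercover of bounded dimension — this is the only place where resolution (or prime-to-$p$ alterations) is genuinely invoked; everything after that is formal manipulation of localization triangles and descent spectral sequences.
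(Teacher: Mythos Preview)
Your overall scheme---absolute vanishing plus the localization triangle for $M(X\setminus Y)\to M(X)\to M_Y(X)$---is reasonable, and your handling of the boundary degree $m=a+d+1$ when $X$ is smooth is in fact tidier than the paper's: Bloch's localization sequence, with obstruction $\CH_{d-a}(Y,a-d-1)_\Lambda=0$, disposes of arbitrary closed $Y\subset X$ in one stroke, whereas the paper first uses purity for smooth pairs and then runs an induction on $\dim(Y)$.

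The substantive gap is the passage to singular $X$. Your claim that ``the singular case descends termwise along a smooth hypercover of the pair'' does not work at $m=a+d+1$: even if each $H^{a+d}(X_p)\to H^{a+d}(X_p\setminus Y_p)$ is surjective, this does \emph{not} propagate to the abutment through the descent spectral sequence, since there are nonzero $E_1^{p,q}$-terms for all $p\ge 0$ and $q\le a+d$ contributing to the same total degree. (The same slip already appears in your derivation of the absolute vanishing: $E_1^{p,q}=0$ for $q>a+d$ does not force $H^m=0$ for $m>a+d$; that vanishing is \cite[Theorem~5.1]{KP}, proved by induction on dimension via a single $cdh$-square rather than a full hypercover.) The paper's route for singular $X$ is induction on $\dim(X)$: apply $cdh$-excision to one resolution (characteristic zero) or one Gabber--Temkin alteration followed by Raynaud--Gruson flattening (positive characteristic), and close the argument with a trace map (\lemref{lem:coh-inj}) showing that pullback along a finite flat map of $p$-power degree is injective on $H^{2a-b}_Y(X,\Lambda(a))$ in the relevant range. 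This single-step-plus-induction pattern is precisely what keeps the degree bound sharp; spreading the problem over infinitely many simplicial levels loses it. A minor point: for $H^*_c$ a singular $X$ has no \emph{smooth} compactification---just take any projective $\bar X\supset X$ with $X$ dense, and apply the already-proved support-vanishing to the possibly singular $\bar X$ and its boundary.
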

\begin{proof}
We first show that $H^{2a-b}_Y(X, \Lambda(a)) = 0$ whenever $a > d+b$.
If $(X,Y)$ is a smooth pair, then the purity theorem for motivic
cohomology (see \cite[Theorem~4.10]{SV}) implies that
$H^{2a-b}_Y(X, \Lambda(a)) \cong H^{2(a-q) -b}(Y, \Lambda(a-q))$, 
where $Y$ has codimension $q$ in $X$.
But it is shown in \cite[Theorem~5.1]{KP} that
the latter group is zero since $a-q > d-q+b = \dim(Y) + b$.

We shall now prove the lemma by induction on the dimensions of $X$ and $Y$.
We first keep our assumption that $X$ is smooth but allow $Y$ to be singular.
If $\dim(Y) = 0$, then we can assume that $Y$ is smooth and reduce to
the previous case. We can therefore assume that $\dim(Y) \ge 1$.
Let $Z$ be the singular locus of $Y$ with reduced induced closed subscheme
structure. Since $k$ is perfect, we have $\dim(Z) < \dim(Y)$.
There is a commutative diagram
\begin{equation}\label{eqn:vanish-coh-0}
\begin{array}{c}
\xymatrix@C=.8pc@R=1pc{
M(X \setminus Y) \ar[r] \ar[d] & M(X) \ar[r] \ar@{=}[d] & M_Y(X) \ar[d] \\
M(X \setminus Z) \ar[r] \ar[d] & M(X) \ar[r] & M_Z(X) \\
M_{Y\setminus Z}(X \setminus Z) & &}
\end{array}
\end{equation}
in $\DM_k$ so that we get a distinguished triangle 
$M_{Y\setminus Z}(X \setminus Z)  \to M_Y(X) \to M_Z(X)$.
This yields an exact sequence
\[
H^{2a-b}_Z(X, \Lambda(a)) \to H^{2a-b}_Y(X, \Lambda(a))  \to 
H^{2a-b}_{Y\setminus Z}(X \setminus Z, \Lambda(a)).
\]
The first term vanishes by induction on $\dim(Y)$ and the third term
vanishes because $(X \setminus Z, Y\setminus Z)$ is a smooth pair.
Hence the middle term vanishes.

We now allow $X$ to be singular and work by induction on $\dim(X)$.
If $\dim(X) = 0$, then we can assume $X$ to be smooth. So we assume
$\dim(X) \ge 1$. Let us first assume that
there is a resolution of singularities
$f: \wt{X} \to X$ and let $\wt{Y} = f^{-1}(Y)$. Let $Z$ denote the singular
locus of $X$ and let $E \subset \wt{X}$ be the exceptional divisor.
If we let $U = X \setminus Y$ and $\wt{U} = f^{-1}(U)$, we get a
commutative diagram where the rows are distinguished triangles in $\DM_k$:
\begin{equation}\label{eqn:vanish-coh-1}
\begin{array}{c}
\xymatrix@C=.8pc@R=1pc{
M(U \cap Z) \oplus M(\wt{U}) \ar[r] \ar[d] & M(U) \ar[r] \ar[d] &
M(E \cap \wt{U})[1] \ar[d] \\
M(Z) \oplus M(\wt{X}) \ar[r] & M(X) \ar[r] & M(E)[1]}
\end{array}
\end{equation}
and this gives a distinguished triangle
\[
M_{Y \cap Z}(Z) \oplus M_{\wt{Y}}(\wt{X}) \to M_Y(X) \to 
M_{\wt{Y} \cap E}(E)[1].
\]
The associated long exact sequence of motivic cohomology groups is of the
form
\[
H^{2a-b-1}_{\wt{Y} \cap E}(E, \Lambda(a)) \to
H^{2a-b}_Y(X, \Lambda(a)) \to H^{2a-b}_{Y\cap Z}(Z, \Lambda(a)) \oplus
H^{2a-b}_{\wt{Y}}(\wt{X}, \Lambda(a)).
\]

\enlargethispage{25pt}

The end terms vanish either by induction on $\dim(X)$ or
by the case of smooth ambient scheme. It follows that the middle term
vanishes, as desired.

If $X$ is not smooth and $k$ has positive characteristic, 
we argue as follows.  By a theorem of Gabber 
\cite{Kelly} and its strengthening by Temkin 
\cite[Theorem~1.2.9]{Temkin}, there exists $W\in \Sm _k$
and a surjective proper map $h:W\rightarrow X$, which is generically \'etale 
of degree
$p^r$, $r\geq 1$.  By a theorem of Raynaud-Gruson
\cite[Theorem~5.2.2]{Ray}, there exists a blow-up $f:\wt{X} \rightarrow X$ 
with 
nowhere dense center $Z \subset X$ 
such that the following diagram commutes, where $h'$ is finite flat surjective 
of degree $p^r$
and $f'$ is the blow-up of $W$ with center $h^{-1}(Z)$:
\begin{align}  \label{bwupsqr.1}
\begin{array}{c}
\xymatrix@R=1pc{\wt{W} \ar[d]_-{f'} \ar[r]^-{h'}& \wt{X} \ar[d]^-{f} \\
	W \ar[r]_-{h} & X.}
\end{array}
\end{align}

Using the $cdh$-excision corresponding to the $cdh$-cover 
$\{\wt{X} \amalg Z \rightarrow X\}$ of $X$ and
arguing as above, we get an exact sequence:
\begin{align*}
H^{2a-b-1}_{\wt{Y} \cap E}(E, \Lambda(a)) \rightarrow 
H^{2a-b}_Y(X, \Lambda(a)) \rightarrow 
H^{2a-b}_{\wt{Y}}(\wt{X},\Lambda(a)) \oplus 
H^{2a-b}_{Y\cap Z}(Z, \Lambda(a)),
\end{align*}
where we set $E = X' \times _{X} Z$ and ${\wt{Y}}=f^{-1}(Y)$.
Since $\dim(E)$ and $\dim(Z)$ are smaller than $\dim(X)$,
it follows by induction on $\dim(X)$ that 
$H^{2a-b-1}_{\wt{Y} \cap E}(E, \Lambda(a))=0$ and
$H^{2a-b}_{Y\cap Z}(Z, \Lambda(a))=0$, thus
the map 
$f^*: H^{2a-b}_{Y}(X,\Lambda(a)) \to 
H^{2a-b}_{\wt{Y}}(\wt{X},\Lambda(a))$ is injective.

We now let $Z' = h^{-1}(Y), \ \wt{Z} = f'^{-1}(Z')$ and
consider the commutative diagram resulting from ~\eqref{bwupsqr.1}:
\begin{equation}\label{eqn:vanish-coh-2}
\begin{array}{c}
\xymatrix@R=1pc{
H^{2a-b}_{Y}(X,\Lambda(a)) \ar[r]^-{f^*} \ar[d]_{h^*} &
H^{2a-b}_{\wt{Y}}(\wt{X},\Lambda(a)) \ar[d]^{h'^*} \\
H^{2a-b}_{Z'}(W,\Lambda(a)) \ar[r]_-{f'^*} &
H^{2a-b}_{\wt{Z}}(\wt{W},\Lambda(a)).}
\end{array}
\end{equation} 

Since $W \in \Sm_k$, we know that $H^{2a-b}_{Z'}(W,\Lambda(a)) = 0$.
Using ~\eqref{eqn:vanish-coh-2}, it suffices therefore to show
that $h'^*$ is injective.  But this follows from \lemref{lem:coh-inj}.
The proof of the first vanishing assertion is now complete.

To prove that $H^{2a-b}_c(X, \Lambda(a)) = 0$ whenever $a > d+b$,
we choose an open immersion $j: X \inj \ov{X}$ with dense image
such that $\ov{X}$ is projective over $k$. 
Letting $Z = \ov{X} \setminus X$, it follows from \cite{Voev-1} and
\cite[Chapter~5]{Kelly} that there is a distinguished triangle in 
${\rm DM}(k, \Lambda)$:
\begin{equation}\label{eqn:loc-seq}
M^c(Z) \to M^c(\ov{X}) \to M^c(X) \to M^c(Z)[1].
\end{equation}

In particular, there is an exact sequence
\[
H^{2a-b-1}(Z, \Lambda(d)) \to H^{2a-b}_c(X, \Lambda(a)) \to
H^{2a-b}(\ov{X}, \Lambda(a)),
\]
where we have replaced the cohomology with compact support by the
usual motivic cohomology on the two end terms because $\ov{X}$ and
$Z$ are projective over $k$. It follows from the first part of the
lemma that the two end terms vanish. The desired assertion now
follows.  
\end{proof}

\begin{lem}\label{lem:coh-inj}
Let $f: W \to X$ be a finite and flat morphism of degree $p^r$ in $\Sch_k$ with
$\dim(X) = d$. Let
$Y \subseteq X$ be a closed subscheme and $Z = Y \times_X W$. Then the
pull-back map $f^*: H^{2a-b}_Y(X, \Lambda(a)) \to H^{2a-b}_Z(W, \Lambda(a))$
is injective  whenever $a > d+b$.
\end{lem}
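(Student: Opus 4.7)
The plan is to exploit the finite flatness of $f$ by constructing a transfer (pushforward) on motivic cohomology with supports and using that $p^r$ is invertible in $\Lambda=\Z[\tfrac{1}{p}]$. (The degree hypothesis $a>d+b$ plays no role in the argument; it is simply the range in which the lemma is applied in the proof of \lemref{lem:vanish-coh}.)

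First I would construct a pushforward $f_\ast\colon H^{2a-b}_Z(W,\Lambda(a))\to H^{2a-b}_Y(X,\Lambda(a))$. The finite flat morphism $f$ of degree $p^r$ defines a finite correspondence $\alpha_f\in\Cor(X,W)$, namely the cycle $[W]\subset X\times W$ viewed via the transpose of the graph of $f$ as finite and flat of degree $p^r$ over $X$. This correspondence induces a morphism $\alpha_f^\ast\colon M(X)\to M(W)$ in $\DML$. Since $Z=f^{-1}(Y)$, restricting to the open $X\setminus Y$ shows that $\alpha_f^\ast$ is compatible with the distinguished triangles $M(X\setminus Y)\to M(X)\to M_Y(X)$ and $M(W\setminus Z)\to M(W)\to M_Z(W)$, so it induces a map $M_Y(X)\to M_Z(W)$. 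Applying $\Hom_{\DML}(-,\Lambda(a)[2a-b])$ yields $f_\ast$.

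Next I would verify that $f_\ast\circ f^\ast=p^r\cdot\id$ on $H^{2a-b}_Y(X,\Lambda(a))$. At the level of finite correspondences this reduces to the classical identity in $\Cor(X,X)$ obtained by composing $\alpha_f$ with the graph $[\Gamma_f]\in\Cor(W,X)$ that induces $f^\ast$: the underlying scheme-theoretic intersection is $W$ itself, mapping to the diagonal $\Delta_X$ with degree $p^r$, and the intersection is proper since $f$ is flat, giving $p^r\cdot[\Delta_X]$. The same identity then descends to $M_Y(X)$ by the naturality observed above. Because $p$ is a unit in $\Lambda$, the map $p^{-r}f_\ast$ is a left inverse of $f^\ast$, proving injectivity.

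The main obstacle is the first step: defining the transfer in the cdh-motivic category when $X$ (and hence $W$) may be singular, and verifying its compatibility with the motive with support $M_Y(X)$. This can be handled by working with the presheaf-with-transfers structure on the complex of equidimensional cycles $C_\ast z_{equi}(\A^a_k,0)$, using the Quillen adjunction \ref{adj.DM-SH} and the equivalence $\mathbf L\pi^\ast\colon \DM(k,\Lambda)\simeq\DM_{cdh}(k,\Lambda)$ of \cite[Proposition~8.1(c)]{CD-1} already invoked in \propref{prop:DMrep}; compatibility with the localization triangle reduces to the fact that the cycle $[W]$ restricts properly to $(X\setminus Y)\times(W\setminus Z)$.
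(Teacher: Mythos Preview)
Your overall strategy---construct a transfer $f_\ast$ with $f_\ast f^\ast=p^r$ and invert $p$---is exactly the paper's strategy. The execution, however, differs, and your version has a real gap. You propose to define the transfer via a finite correspondence $\alpha_f\in\Cor(X,W)$, but Voevodsky's category of finite correspondences requires the source to be smooth; for singular $X$ the group $\Cor(X,W)$ is not defined, and the composition of correspondences (which underlies your computation $\Gamma_f\circ\alpha_f=p^r[\Delta_X]$) uses intersection theory that is unavailable. Your last paragraph acknowledges this, but the fix you sketch does not close the gap: the presheaf-with-transfers structure on $C_\ast z_{equi}(\A^a_k,0)$ again only supplies transfers for correspondences with smooth source, and the equivalence $\DM(k,\Lambda)\simeq\DM_{cdh}(k,\Lambda)$ is a statement about categories, not a recipe for producing the specific map $M(X)\to M(W)$ you need. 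A workable repair is to use the proper pushforward of cycles to define a trace $z_{equi}(\A^a_k,0)(W')\to z_{equi}(\A^a_k,0)(X')$ for every base change $W'\to X'$ of $f$, which needs no smoothness and is compatible with restriction to opens; alternatively, invoke the trace structure on $H\Lambda$ from \cite[Corollary~5.2.4]{Kelly}, which is precisely what the paper does.

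The paper's argument is organized differently and, contrary to your parenthetical remark, does use the hypothesis $a>d+b$. It first invokes the trace structure on $H\Lambda$ in $\stablehomotopy_X$ to obtain $f_\ast$ on absolute cohomology with $f_\ast f^\ast=p^r$, for all of $X$, $W$, $U=X\setminus Y$ and $V=W\setminus Z$. It then uses the vanishing $H^{2a-b}(X,\Lambda(a))=H^{2a-b}(W,\Lambda(a))=0$ (which is where $a>d+b$ enters, via \cite[Theorem~5.1]{KP}) to make the localization sequences terminate in surjections $H^{2a-b-1}(U)\twoheadrightarrow H^{2a-b}_Y(X)$ and $H^{2a-b-1}(V)\twoheadrightarrow H^{2a-b}_Z(W)$. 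The map $f_\ast$ on the support groups is then \emph{produced by a diagram chase} from the already-constructed $f_\ast$ on $H^{2a-b-1}(U)$ and $H^{2a-b-1}(V)$, and the composite $f_\ast f^\ast$ on $H^{2a-b}_Y(X)$ is seen to be an isomorphism by comparison with the left columns. So the hypothesis buys the paper a shortcut: it avoids having to verify compatibility of the trace with the full localization triangle at the level of spectra or complexes. Your route, once the transfer is correctly constructed at the sheaf level, would indeed prove the stronger statement without the degree restriction; but as written it does not get there.
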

\begin{proof}
When $Y$ (and hence $Z$) is empty, both sides are zero by 
\cite[Theorem~5.1]{KP} and so the lemma holds. 
Otherwise, we let
$U = X \setminus Y$ and $V = f^{-1}(U)$. 
Let $v: X \to \Spec(k)$ and $u: W \to \Spec(k)$ denote the structure maps.

By \cite[Definition~4.3.1,
Corollary~5.2.4]{Kelly}, the spectrum $H\Lambda \in \sS\sH$ has the structure
of traces. In particular, for any $m, n \in \Z$,
there exists a trace map ${\rm Tr}_f:
{\bf R}f_* {\bf L}f^* {\bf L}v^*(\Sigma^{m,n} H\Lambda) \to  
{\bf L}v^*(\Sigma^{m,n} H\Lambda)$ in $\stablehomotopy _X$
such that its composition with the unit of adjunction 
$(\mathbf L f^\ast, \mathbf R f_\ast)$:
\[
{\bf L}v^*(\Sigma^{m,n} H\Lambda) \rightarrow 
{\bf R}f_* {\bf L}f^* {\bf L}v^*(\Sigma^{m,n} H\Lambda) \xrightarrow{{\rm Tr}_f} 
{\bf L}v^*(\Sigma^{m,n} H\Lambda)
\]
is multiplication by $p^r$. 
In particular, the composite map
\begin{equation}\label{eqn:coh-inj-0}
\xymatrix@C.8pc{
\Hom_{\stablehomotopy_X}({\mathbf 1}_X, {\bf L}v^*(\Sigma^{m,n} H\Lambda)) 
\ar[r]^-{f^*} \ar[dr] &
\Hom_{\stablehomotopy_X}({\mathbf 1}_X, 
{\bf R}f_* {\bf L}f^* {\bf L}v^*(\Sigma^{m,n} H\Lambda)) \ar[d]^-{f_*} \\
& \Hom_{\stablehomotopy_X}({\mathbf 1}_X, {\bf L}v^*(\Sigma^{m,n} H\Lambda))}
\end{equation}
is multiplication by $p^r$, where we let $f_*$ denote the
maps on the cohomology groups induced by ${\rm Tr}_f$.
It follows that this composite map is an isomorphism.

On the other hand, we have
$\Hom_{\stablehomotopy_X}({\mathbf 1}_X, {\bf L}v^*(\Sigma^{m,n} H\Lambda)) 
\cong H^m(X, \Lambda(n))$ by \cite[Corollary~3.6]{KP}.
Moreover,
\[
\begin{array}{lll}
\Hom_{\stablehomotopy_X}({\mathbf 1}_X, 
{\bf R}f_* {\bf L}f^* {\bf L}v^*(\Sigma^{m,n} H\Lambda)) & {\cong}^1 &
\Hom_{\stablehomotopy_W}({\bf L}f^*({\mathbf 1}_X), 
{\bf L}f^* {\bf L}v^*(\Sigma^{m,n} H\Lambda)) \\
& \cong & 
\Hom_{\stablehomotopy_W}({\mathbf 1}_W, 
{\bf L}(v \circ f)^*(\Sigma^{m,n} H\Lambda)) \\
& \cong &
\Hom_{\stablehomotopy_W}({\mathbf 1}_W, 
{\bf L}u^*(\Sigma^{m,n} H\Lambda)) \\
& {\cong}^2 &
H^m(W, \Lambda(n)),
\end{array}
\]
where ${\cong}^1$ is a consequence of adjointness of the pair 
$({\bf L}f^*, {\bf R}f_*)$ and ${\cong}^2$
follows from \cite[Corollary~3.6]{KP}.
Using these isomorphisms in ~\eqref{eqn:coh-inj-0}, we get the maps
\[
H^m(X, \Lambda(n)) \xrightarrow{f^*} H^m(W, \Lambda(n)) \xrightarrow{f_*}
H^m(X, \Lambda(n))
\]
whose composite is an isomorphism.

We now consider the commutative diagram
\begin{equation}\label{eqn:coh-inj-1}
\xymatrix@C.8pc{
H^{2a-b-1}(X, \Lambda(a)) \ar[r] \ar[d]_-{f^*} & 
H^{2a-b-1}(U, \Lambda(a)) \ar[r] \ar[d]^-{f^*} &
H^{2a-b}_Y(X, \Lambda(a)) \ar[r] \ar[d]^-{f^*} & 0 \\
H^{2a-b-1}(W, \Lambda(a)) \ar[r] \ar[d]_-{f_*} & 
H^{2a-b-1}(V, \Lambda(a)) \ar[r] \ar[d]^-{f_*} &
H^{2a-b}_Z(W, \Lambda(a)) \ar[r] \ar@{.>}[d] & 0 \\
H^{2a-b-1}(X, \Lambda(a)) \ar[r]  & 
H^{2a-b-1}(U, \Lambda(a)) \ar[r] &
H^{2a-b}_Y(X, \Lambda(a)) \ar[r]  & 0,}
\end{equation}
whose rows are exact because 
$H^{2a-b}(X, \Lambda(a)) = H^{2a-b}(W, \Lambda(a)) = 0$, as we saw in the 
beginning of the proof.
It follows that there exists an arrow $f_*:
H^{2a-b}_Z(W, \Lambda(a)) \to H^{2a-b}_Y(X, \Lambda(a))$ such that
all squares in this diagram commute.
Since the left and the middle composite vertical arrows are isomorphisms,
as we just observed above, it follows that the right composite vertical arrow
must also be an isomorphism.
In particular, $f^*: H^{2a-b}_Y(X, \Lambda(a)) \to 
H^{2a-b}_Z(W, \Lambda(a))$ is injective. This finishes the proof.
\end{proof}

\begin{thm}\label{thm:Conn-KGL}
With the notation of \ref{def.relKH}, assume that $f$ is either a closed 
immersion such that $\dim(Y) < \dim(X) = d$, or an open immersion.
Then the map
$\KGL^0_X [\tfrac{1}{p}]\to s_0\KGL_X [\tfrac{1}{p}] \cong 
H\Z [\tfrac{1}{p}]$ induces for every integer $i \ge 0$, an isomorphism
\begin{equation}\label{eqn:Conn-KGL-0}
CKH^{2d+i, d+i}(M_f)_{\Lambda} \xrightarrow{\cong} H^{2d+i}(M_f, \Lambda(d+i)).
\end{equation}
In particular, the canonical map $\KGL^0_X \to \KGL_X$ 
induces a natural cycle class map
\begin{equation}\label{eqn:Conn-cycle}
cyc_i: H^{2d+i}(M_f, \Lambda(d+i)) \to KH_i(f)_{\Lambda}.
\end{equation}
\end{thm}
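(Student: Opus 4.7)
The plan is to exploit the slice tower of $\KGL$: the fiber sequence $f_1\KGL \to \KGL^0 \to s_0\KGL$, with $s_0\KGL \cong \mathbf L\pi^* H\Lambda$ (using \cite[Lemma~4.8]{KP} in positive characteristic), yields upon applying $\Hom_{\stablehomotopycdh}(\Sigma^\infty_T M_f, \Sigma^{2d+i, d+i}(-))_\Lambda$ a long exact sequence
\[
\Hom(\Sigma^\infty_T M_f, \Sigma^{2d+i, d+i} f_1\KGL)_\Lambda \to CKH^{2d+i, d+i}(M_f)_\Lambda \to H^{2d+i}(M_f, \Lambda(d+i)) \to \Hom(\Sigma^\infty_T M_f, \Sigma^{2d+i+1, d+i} f_1\KGL)_\Lambda.
\]
To obtain \eqref{eqn:Conn-KGL-0} it thus suffices to show the first and last terms vanish. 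A standard iteration along the tower using the fiber sequences $f_{q+1}\KGL \to f_q\KGL \to s_q\KGL$ ($q \geq 1$), combined with the boundedness of $\KGL[\tfrac{1}{p}]$ from \cite[4.7, 4.10]{KP} applied to $M_f$ via the cofiber sequence $\Sigma^\infty_T Y_+ \to \Sigma^\infty_T X_+ \to \Sigma^\infty_T M_f$ (as in the proof of \thmref{thm:KH-SS}), reduces the vanishing of these error terms to showing that each slice contribution $\Hom(\Sigma^\infty_T M_f, \Sigma^{p, d+i} s_q\KGL)_\Lambda$ vanishes for $p \in \{2d+i, 2d+i+1\}$ and $q \geq 1$.

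Under the identification $s_q\KGL \cong \Sigma^q_T\mathbf L\pi^*H\Lambda$ together with \eqref{eqn:RMC-1-0}, this slice contribution equals $H^{p+2q}(M_f, \Lambda(d+i+q))$, whose cohomological degree minus weight equals $(p - (d+i)) + q \geq d + 1 > d$ when $p \geq 2d+i$ and $q \geq 1$. For $f$ an open immersion with complement $Z$, \propref{rep.motcohcs} identifies this group with $H^{p+2q}_Z(X, \Lambda(d+i+q))$, which vanishes by \lemref{lem:vanish-coh}. For $f$ a closed immersion, the cofiber sequence for $M_f$ produces a long exact sequence that reduces vanishing of $H^m(M_f, \Lambda(n))$ for $m - n > d$ to vanishing of $H^m(X, \Lambda(n))$ and $H^{m-1}(Y, \Lambda(n))$; both hold by the singular-scheme vanishing of \cite[Theorem~5.1]{KP}, since $m - n > d = \dim X$ and $m - 1 - n \geq d > \dim Y$ (using $\dim Y < d$).

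With \eqref{eqn:Conn-KGL-0} in hand, I would define the cycle class map as the composite
\[
H^{2d+i}(M_f, \Lambda(d+i)) \xleftarrow{\cong} CKH^{2d+i, d+i}(M_f)_\Lambda \xrightarrow{(u_X)_\ast} \Hom_{\stablehomotopycdh}(\Sigma^\infty_T M_f, \Sigma^{2d+i, d+i}\KGL)_\Lambda \cong KH_i(f)_\Lambda,
\]
where the first arrow inverts \eqref{eqn:Conn-KGL-0}, the second is induced by the canonical map $u_X: \KGL^0 \to \KGL$, and the final isomorphism uses Bott periodicity $\Sigma^{d+i}_T\KGL \cong \KGL$ in $\stablehomotopycdh$ together with \corref{cor.hfibident}. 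Naturality in $(X,Y)$ is automatic from the functoriality of every step, in particular \eqref{dg.natrelKH}. The principal obstacle in this program is controlling the vanishing of $H^m(M_f, \Lambda(n))$ in the closed immersion case when $X$ is permitted to be singular; this relies crucially on the singular-schemes vanishing result \cite[Theorem~5.1]{KP}, which is the substantive external input beyond otherwise routine slice-filtration manipulations.
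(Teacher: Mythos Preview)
Your argument is correct and follows essentially the same route as the paper: both reduce the claimed isomorphism to the vanishing of $H^{m}(M_f,\Lambda(n))$ in the range $m-n>d$, the paper by invoking the slice spectral sequence for $\mathbf L\pi^*\KGL^0$ together with \lemref{lem:vanish-coh}, and you by unwinding the same tower via iterated long exact sequences. Your direct treatment of the closed-immersion case through the cofiber sequence and \cite[Theorem~5.1]{KP} is in fact a bit more transparent than the paper's appeal to \lemref{lem:vanish-coh}, since that lemma as stated addresses $H^{*}_Y$ and $H^{*}_c$ rather than $H^{*}(X|Y,-)$ for non-proper $X$.
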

\begin{proof}
By \cite[5.12-5.13]{KP}, the argument in \thmref{thm:KH-SS} also applies 
to $\mathbf L \pi ^{\ast} \KGL^0$.  In particular, there is a strongly 
convergent
spectral sequence:
\begin{equation}\label{eqn:Conn-KGL-1}
E^{a,b}_2 = H^{a-b}(M_f, \Lambda_{b\le0}(n-b))
\Rightarrow CKH^{a+b, n}(M_f)_{\Lambda},
\end{equation}
where $\Lambda_{b\le0} = \Lambda$ if $b \le  0$ and is zero otherwise.
Furthermore, this spectral sequence degenerates with rational coefficients.

Combining the spectral sequence \eqref{eqn:Conn-KGL-1} with
\lemref{lem:vanish-coh}, we conclude that the
edge map $CKH^{2d+i, d+i}(M_f)_{\Lambda} \to H^{2d+i}(M_f, \Lambda(d+i))$ is an 
isomorphism for every $i \ge 0$.
Finally, we compose the inverse of this isomorphism with the canonical map
$CKH^{2d+i, d+i}(M_f)_{\Lambda} \to KH_{i}(M_f)_{\Lambda}$ to get the desired cycle 
class map.
\end{proof}

Note that the proofs of 
\lemref{lem:vanish-coh} and \thmref{thm:Conn-KGL} show that they
remain valid without
inverting $p$ and without assuming resolution of singularities if 
$f$ is a closed immersion of smooth schemes. We thus get the following.

\begin{cor}\label{cor:Conn-KGL-sm}
Let $k$ be a perfect field and let $f: Y \inj X$ be a closed immersion
in $\Sm_k$. Then there exists a cycle class map for $i \ge 0$:
\[
cyc_i: H^{2d+i}(X|Y, \Z(d+i)) \to K_{i}(X,Y)
\]
whose kernel is a torsion group.
\end{cor}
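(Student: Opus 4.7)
The plan is to replay the proofs of \lemref{lem:vanish-coh} and \thmref{thm:Conn-KGL} integrally in the smooth-pair setting, and then invoke the rational Chern-character splitting to obtain the torsion-kernel statement. In the smooth-pair case only the very first paragraph of the proof of \lemref{lem:vanish-coh} is needed: purity \cite[Theorem~4.10]{SV} gives an integral isomorphism $H^{2a-b}_Y(X, \Z(a)) \cong H^{2(a-q)-b}(Y, \Z(a-q))$ with $q = \codim_X(Y)$, and the right-hand side vanishes for $a > d+b$ by the integral statement \cite[Theorem~5.1]{KP}. No resolution of singularities, no Gabber--Temkin trick, and no inversion of $p$ is required, so the vanishing in \lemref{lem:vanish-coh} holds integrally in our situation.

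Next I would replay the proof of \thmref{thm:Conn-KGL} with $\Lambda=\Z$. The spectral sequence \eqref{eqn:Conn-KGL-1} exists integrally for smooth pairs by applying the argument of \thmref{thm:KH-SS} (using the integral refinement \thmref{thm:MT-1}(2)) to $\mathbf{L}\pi^{\ast}\KGL^0$. Combined with the integral vanishing just established, the edge map $CKH^{2d+i,d+i}(X|Y) \xrightarrow{\cong} H^{2d+i}(X|Y,\Z(d+i))$ is an isomorphism, and composing its inverse with the canonical map $CKH^{2d+i,d+i}(X|Y)\to KH_i(X,Y)$ induced by $u_X:\KGL^0\to\KGL$ (and invoking $K(X,Y)\simeq KH(X,Y)$ for smooth pairs) produces the desired cycle class map $cyc_i$.

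For the torsion-kernel claim I would rationalize. By \thmref{thm:KH-SS} together with the Chern-character splitting \eqref{eqn:Chern-iso}, the slice spectral sequence for $KH(X,Y)_{\Q}$ degenerates to give $K_i(X,Y)_{\Q}\cong\bigoplus_{q}H^{2q-i}(X|Y,\Q(q))$. The rational vanishing from the previous step forces $CKH^{2d+i,d+i}(X|Y)_{\Q}$ to collapse onto the single summand $H^{2d+i}(X|Y,\Q(d+i))$, and since $s_q\KGL^0\cong s_q\KGL\cong\Sigma^{q}_{T}H\Z$ for $q\ge 0$, the map $u_X$ identifies this summand with the $q=d+i$ summand of the Chern-character decomposition of $K_i(X,Y)_{\Q}$, after implementing Bott periodicity between $KH^{2d+i,d+i}$ and $KH_i=KH^{-i,0}$. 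Hence $cyc_i\otimes\Q$ is a split injection, so the kernel of $cyc_i$ is torsion. The main technical obstacle is precisely this last identification: one must verify that the rationalized cycle class map really coincides with the inclusion of the weight-$(d+i)$ summand after the Bott-periodicity bidegree shift, and this requires carefully tracking how the slice-wise isomorphisms $s_q\KGL^0\cong s_q\KGL$ interact with the rational Chern-character splitting.
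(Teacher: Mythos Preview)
Your proposal is correct and follows exactly the route the paper intends: the paper's entire proof of \corref{cor:Conn-KGL-sm} is the one-sentence remark preceding it, namely that the proofs of \lemref{lem:vanish-coh} and \thmref{thm:Conn-KGL} go through integrally for a closed immersion of smooth schemes. You have simply unpacked that sentence, and in particular you supply the torsion-kernel argument (rational splitting via the Chern character \eqref{eqn:Chern-iso}) which the paper leaves entirely implicit.

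The ``technical obstacle'' you flag at the end is not a real obstacle. Rationally one has $(\KGL^0_k)_{\Q}=f_0((\KGL_k)_{\Q})\cong\bigoplus_{q\ge 0}\Sigma^q_T H\Q$ and $(\KGL_k)_{\Q}\cong\bigoplus_{q\in\Z}\Sigma^q_T H\Q$, and the canonical map $u_X$ is just the inclusion of summands, since $u_X$ is compatible with the slice filtrations and induces the identity on each $s_q$ for $q\ge 0$. After applying $\Sigma^{2d+i,d+i}$ and mapping $M_f$ in, the vanishing you already established kills all summands with $q>0$ on the connective side, and the resulting map is precisely the inclusion of the $(d+i)$-th weight piece into $\bigoplus_{q'}H^{2q'-i}(X|Y,\Q(q'))\cong K_i(X,Y)_{\Q}$. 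No delicate bookkeeping with Bott periodicity is needed beyond the single identification $KH^{2d+i,d+i}(M_f)\cong KH_i(f)$, which is immediate from $\Sigma_T\KGL\cong\KGL$.
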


\subsection{The Chern class map to relative motivic cohomology}
\label{sec:CC}
The Chern class maps from the homotopy invariant $K$-theory of
singular schemes to their motivic cohomology was constructed in
characteristic zero in \cite[\S~6]{KP}. Using the recent result
of Kerz, Strunk and Tamme \cite{KST} and the double construction
of \cite{BK}, we can generalize the construction of
\cite{KP} to the relative setting and positive 
characteristic as follows.

\begin{thm}\label{thm:CCR}
Let $k$ be a perfect field.
Let $X$ be a smooth scheme over $k$ and let $f: Y \inj X$ be the 
inclusion of an effective Cartier divisor. 
Assume that $k$ admits resolution of singularities.
Then there are Chern class maps
\[
c_{X,Y, b}: KH_a(X,Y) \to H^{2b-a}(X|Y, \Z(b))
\]
which are functorial in the pair $(X,Y)$.
\end{thm}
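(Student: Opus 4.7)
The plan is to construct the relative Chern class map by combining the functorial absolute Chern classes of \cite[\S 6]{KP} with the double construction recalled in \S\ref{sec:double}, and to leverage the pro-$cdh$ descent theorem of Kerz--Strunk--Tamme \cite{KST} to ensure the construction works integrally over any perfect field admitting resolution of singularities. The key observation is that both $KH(X,Y)$ and $H^*(X|Y,\Z(b))$ appear naturally as fibers/kernels of the pullback along $\iota_-\colon X \hookrightarrow S_X$, so the relative Chern classes can be induced from absolute ones on the singular scheme $S_X$ and the smooth scheme $X$.

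First I would revisit and extend the construction of \cite[\S 6]{KP}, which produced functorial Chern classes $c_{W,b}\colon KH_a(W) \to H^{2b-a}(W,\Z(b))$ for arbitrary $W \in \Sch_k$ in characteristic zero. The construction is Riou-style: it applies the slice tower of $\mathbf{L}\pi^{\ast}\KGL_k$ in $\stablehomotopycdh$ together with the integral slice identification $s_q\mathbf{L}\pi^{\ast}\KGL_k \cong \Sigma^q_T\mathbf{L}\pi^{\ast}H\Z$. Under resolution of singularities this identification holds integrally in $\stablehomotopycdh$ by \cite[Lemma~4.8]{KP} (as already exploited in the proof of \thmref{thm:KH-SS*}), and the Kerz--Strunk--Tamme pro-$cdh$ descent theorem \cite{KST} is what upgrades the absolute Chern classes of \cite{KP} from characteristic zero to any perfect field admitting resolution of singularities, since it reconciles $K$ and $KH$ integrally on singular schemes. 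In particular, naturality of the slice filtration ensures that $c_{W,b}$ is functorial in $W$.

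Next I would apply Step~1 to $W = S_X$ and $W = X$. Naturality for the closed immersion $\iota_-\colon X \hookrightarrow S_X$ yields a commutative square
\[
\xymatrix@R=1pc{
KH_a(S_X) \ar[r]^-{c_{S_X,b}} \ar[d]_{\iota_-^{\ast}} & H^{2b-a}(S_X,\Z(b)) \ar[d]^{\iota_-^{\ast}} \\
KH_a(X) \ar[r]_-{c_{X,b}} & H^{2b-a}(X,\Z(b)).
}
\]
By \corref{cor.hfibident} applied to the $cdh$-square \eqref{eqn:rel-et-2}, we obtain a fiber sequence $KH(X,Y) \to KH(S_X) \xrightarrow{\iota_-^{\ast}} KH(X)$ (split by $\nabla^{\ast}$), and by \propref{prop:rel-mot-coh-equid} and the split short exact sequence \eqref{eqn:nabla*} we recover $H^{2b-a}(X|Y,\Z(b))$ as the kernel of $\iota_-^{\ast}$ on motivic cohomology. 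Passing to the induced map on kernels (equivalently, homotopy fibers), the square produces the desired Chern class map $c_{X,Y,b}\colon KH_a(X,Y) \to H^{2b-a}(X|Y,\Z(b))$. Functoriality in the pair $(X,Y)$ follows from the functoriality of the double construction combined with the naturality of the absolute Chern classes.

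The main obstacle is Step~1: securing the integral Chern class map on the \emph{singular} scheme $S_X$ over a field of positive characteristic. This is where resolution of singularities together with \cite{KST} is essential, since it is precisely what allows one to promote the $\Z[\tfrac{1}{p}]$-level slice identification used in \thmref{thm:KH-SS*} to an integral statement and to identify $KH$ of $S_X$ with a cohomological invariant compatible with the slice formalism. Once this integral compatibility on $S_X$ is in hand, the remainder of the argument is a formal diagram chase using the splitting induced by the fold map $\nabla\colon S_X\to X$.
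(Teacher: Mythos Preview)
Your proposal is correct and follows essentially the same route as the paper: reduce to absolute Chern classes on the double $S_X$ and on $X$ via the $cdh$-square \eqref{eqn:rel-et-2}, invoke \cite[\S~6]{KP} for those, and pass to kernels using the splitting by $\nabla$. The one point where you are slightly imprecise is the role of \cite{KST}: the paper uses it not to ``upgrade the slice identification'' but specifically to verify that \cite[Lemma~6.4]{KP} holds in the present generality, via the weak equivalence $KH(Z)\to\wt{\sK}_{cdh}(Z)$; your reference to pro-$cdh$ descent is the right theorem but the application is narrower than you suggest.
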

\begin{proof}
To prove the theorem, we shall use the doubling trick of \cite{BK}.
We let $S_X = S(X,Y)$ denote the double of $X$ along $Y$
(see \S~\ref{sec:double}).
Using the $cdh$-descent for the motivic cohomology and $KH$-theory,
it suffices to construct the Chern class map from
$KH_a(S_X, X_-)$ to $H^{2b-a}(S_X|X_-, \Z(b))$
(see the proof of \lemref{lem:hyper-0}).
 
After this reduction, the proof is now identical to that of 
\cite[Theorem~6.9]{KP} with very
minor modifications that we explain.
We follow the notations of \cite[\S~6]{KP}.
We only have to show that \cite[Lemma~6.4]{KP} is valid in the present
case too. But this is immediate from our assumption that $k$ admits
resolution of singularities and the recent result of \cite{KST} that
the canonical map $KH(Z) \to \wt{\sK}_{cdh}(Z)$ (induced by the
$cdh$-descent for $KH$, shown in \cite{Cisinski}) is a weak equivalence of
spectra for any $Z \in \Sch_k$.

Following the rest of the argument of \cite[Theorem~6.9]{KP} verbatim,
we obtain a commutative diagram of Chern class maps
\begin{equation}\label{eqn:Comp-1} 
\xymatrix@C.8pc{
KH_a(S_X) \ar[r]^-{C_b} \ar[d]_{\iota^*_+}
& \H^{-a}_{cdh}(S_X, C_*z_{equi}(\A^b_k, 0)_{cdh}) \ar[d]_{\iota^*_+} 
\ar[r]^-{\cong} & H^{2b-a}(S_X, \Z(b)) \ar[d]^{\iota^*_+} \\
KH_a(X_-) \ar[r]^-{C_b} 
& \H^{-a}_{cdh}(X_-, C_*z_{equi}(\A^b_k, 0)_{cdh}) 
\ar[r]^-{\cong} & H^{2b-a}(X_-, \Z(b)).}
\end{equation}

Since $\iota_+: X_- \inj S_X$ is naturally split by $\nabla: S_X \to X$, we 
get the desired natural map
\begin{equation}\label{eqn:Comp-2} 
c_{X,Y, b}: KH_a(S_X, X_-) \to H^{2b-a}(S_X|X_-, \Z(b)).
\end{equation}

The functoriality of $c_{X,Y, b}$ is clear from its construction.
We refer to the proof of \cite[Theorem~6.9]{KP} for further detail.
\end{proof}

\section{Chow group with modulus and relative motivic 
cohomology}\label{sec:Chow-mot-coh}
We keep the assumption on the ground field $k$ and the coefficient ring
$\Lambda$ as before. 
In this section, we shall construct a natural map from the Chow groups with
modulus to the relative motivic cohomology groups. We shall later show
that this map is an isomorphism for 0-cycles on affine schemes.

\subsection{Higher Chow groups and motivic cohomology with 
support}\label{sec:HVG-MCCS}
Consider a Cartesian square of quasi-projective schemes 
\begin{equation}\label{eqn:MCCS-0}
\begin{array}{c}
\xymatrix@C1pc@R=1pc{
Z \ar[r]^-{u} \ar[d]_{v'} & Z' \ar[d]^{w} \\
X \ar[r]^-{v} & X'}
\end{array}
\end{equation}
where $(X',X)$ is a smooth pair with $\dim(X') = d \ge 1$,
and $X$ is a divisor. We assume that the
vertical arrows are closed immersions of codimension $q \ge 1$
such that $Z'$ is integral and is not contained in $X$.   
In particular, ~\eqref{eqn:MCCS-0} is a transverse square.
Let $\sL$ be the line bundle on $X'$ associated to $X$.
For any locally closed subscheme 
$U' \subset X'$,
let $z^i_X(U', \bullet)$ be the subcomplex of Bloch's cycle complex
$z^i(U', \bullet)$ generated by integral cycles which intersect
$X$ properly. Given any open subset $U' \subset X'$ and $U = X \cap U'$, 
there is a commutative diagram 
\begin{equation}\label{eqn:MCCS-1}
\begin{array}{c}
\xymatrix@C=.8pc@R=1pc{ 
z^i_X(Z'\cap U', \bullet) \ar[r] \ar[d] & z^{i+q}_X(U', \bullet) \ar[r] \ar[d] &
z^{i+q}_X(U' \setminus Z', \bullet) \ar[d] \\
z^i(Z'\cap U', \bullet) \ar[r] & z^{i+q}(U', \bullet) \ar[r] &
z^{i+q}(U' \setminus Z', \bullet).}
\end{array}
\end{equation}  

The localization theorem for Bloch's complex says that the bottom row
is a distinguished triangle in the derived category of chain complexes of 
abelian groups. Since $X \cap (X' \setminus Z') \neq \emptyset$
and $X \subset X'$ is a divisor, the proof of Bloch's localization
theorem shows easily that the top row is also a distinguished triangle.
By \cite[Theorem 1.10]{Krishna-Levine} it
follow that the middle and the right vertical
arrows are quasi-isomorphisms. So the left vertical arrow
is also a quasi-isomorphism.

On the other hand, there is a commutative diagram of cycle complexes
\begin{equation}\label{eqn:MCCS-2}
\begin{array}{c}
\xymatrix@C=.8pc@R=1pc{ 
z^i_X(Z'\cap U', \bullet) \ar[r] \ar[d]_{u^*} & z^{i+q}_X(U', \bullet) \ar[r] 
\ar[d]^{v^*} &
z^{i+q}_X(U' \setminus Z', \bullet) \ar[d]^{v^*} \\
z^i(Z\cap U, \bullet) \ar[r] & z^{i+q}(U, \bullet) \ar[r] &
z^{i+q}(U \setminus Z, \bullet).}
\end{array}
\end{equation}  

In particular, there is a pull-back map
$u^*: \CH^i(Z',j) \to \CH^i(Z,j)$ which is induced by capping
with the first Chern class $c_1(\sL)$.
Furthermore, it is immediate that ~\eqref{eqn:MCCS-1} and ~\eqref{eqn:MCCS-2}
are compatible with respect to the inclusions of open subsets 
$U'_1 \subset U'_2$ in $X'$. By varying the open $U' \subset X'$,
it follows that ~\eqref{eqn:MCCS-1} and ~\eqref{eqn:MCCS-2} form commutative 
diagrams of complexes of presheaves of abelian groups on the small 
Zariski site of $X'$. 

It is shown in \cite[Proposition~4.2.9]{Voev-1} that there is a
monomorphism of chain complexes $\psi_{U'}: C_{\ast}z_{equi}(U',i) \inj
z^{d-i}(U', \bullet)$ which is functorial with respect to flat pull-back and
proper push-forward. In particular, this induces a monomorphism of 
complexes of presheaves of abelian groups on the small 
Zariski site of $X'$. Note here that by $C_{\ast}z_{equi}(U',i)$,
we mean $C_{\ast}z_{equi}(U',i)(\Spec(k))$. 
For any complex $\sC$ of presheaves of abelian groups on 
the small Zariski site of $X'$, we let $\sC_{\zar}$ denote the sheafification
of $\sC$.

We get a sequence of morphisms of complexes of Zariski sheaves
\begin{equation}\label{eqn:MCCS-3}
\begin{array}{c}
\xymatrix@C=.8pc@R=1pc{ 
C_{\ast}z_{equi}(-,i) \ar[r]  &  z^{d-i}(-, \bullet)_{\zar} &
z^{d-i}_X(-, \bullet)_{\zar} \ar[l] \ar[d]^{\cap X} \\
& C_{\ast}z_{equi}(-,i-1)|_X \ar[r] & z^{d-i}(-, \bullet)_{\zar}|_X  .}
\end{array}
\end{equation}

Note that $C_{\ast}z_{equi}(-,i)$ is already a complex of Zariski sheaves.
All arrows except the vertical one in this diagram are quasi-isomorphisms
of complexes of Zariski sheaves (see \cite[Proposition~4.2.9]{Voev-1}).
It follows that there is a morphism 
$v^*: C_{\ast}z_{equi}(-,i) \to C_{\ast}z_{equi}(-,i-1)|_X$ in the derived
category of complexes of Zariski sheaves and a commutative diagram
\begin{equation}\label{eqn:MCCS-4}
\begin{array}{c}
\xymatrix@C=.8pc@R=1pc{ 
C_{\ast}z_{equi}(-,i)|_{Z'} \ar[r]  & C_{\ast}z_{equi}(-,i)|_{X'} \ar[r] 
\ar[d]^{v^*} &
C_{\ast}z_{equi}(-,i)|_{X' \setminus Z'} \ar[d]^{v^*} \\
C_{\ast}z_{equi}(-,i-1)|_{Z} \ar[r] & C_{\ast}z_{equi}(-,i-1)|_{X} \ar[r] &
C_{\ast}z_{equi}(-,i-1)|_{X\setminus Z}.}
\end{array}
\end{equation}  

It follows from ~\eqref{eqn:MCCS-2} and ~\eqref{eqn:MCCS-3} that 
the two rows in ~\eqref{eqn:MCCS-4} are distinguished triangles in
the derived category of complexes of Zariski sheaves and the
right square commutes. We therefore obtain a morphism
$u^*: C_{\ast}z_{equi}(-,i)|_{Z'} \to C_{\ast}z_{equi}(-,i-1)|_{Z}$ such that
~\eqref{eqn:MCCS-4} commutes. Moreover,  ~\eqref{eqn:MCCS-3} implies that
\begin{equation}\label{eqn:MCCS-5}
\begin{array}{c}
\xymatrix@C=.8pc@R=1pc{ 
C_{\ast}z_{equi}(-,i)|_{Z'} \ar[r] \ar[d]_{u^*} &  z^{d-i-q}(-, \bullet)_{\zar}|_{Z'}
\ar[d]^{u^*} \\
C_{\ast}z_{equi}(-,i-1)|_{Z} \ar[r] & z^{d-i-q}(-, \bullet)_{\zar}|_{Z}}
\end{array}
\end{equation}
commutes.

It follows from \cite[Proposition~4.2.9, Theorem~4.3.7]{Voev-1} 
(see also \cite[Theorem~19.11]{MVW}) that
for any $W \in \Sm_k$, the Zariski hypercohomology of 
$C_{\ast}z_{equi}(-,i)|_{W}$
are the motivic cohomology groups of $W$. In particular,
the commutative square of hypercohomology groups induced by the
right square in ~\eqref{eqn:MCCS-4} is isomorphic to the
one induced on the motivic cohomology by the commutative diagram of
motives
\begin{equation}\label{eqn:MCCS-6}
\begin{array}{c}
\xymatrix@C=.8pc@R=1pc{ 
M(X \setminus Z) \ar[r] \ar[d]_{v} & M(X) \ar[d]^{v} \\
M(X' \setminus Z') \ar[r] & M(X').}
\end{array}
\end{equation}

We conclude from  ~\eqref{eqn:MCCS-4} that the hypercohomology of
$C_{\ast}z_{equi}(-,i)|_{Z'}$ and $C_{\ast}z_{equi}(-,i)|_{Z}$ are the motivic 
cohomology groups of $M_{Z'}(X')$ and $M_Z(X)$, respectively.
Moreover, the left vertical arrow in ~\eqref{eqn:MCCS-5} is the one
induced by the canonical map $u: M_Z(X) \to M_{Z'}(X')$.
We have thus shown the following. 

\begin{lem}\label{lem:Chow-mot-comp}
Given the commutative diagram ~\eqref{eqn:MCCS-0},
there are induced maps of motives $v: M_Z(X) \to M_{Z'}(X')$ and 
cycle complexes $u^*: z^{i}(Z', \bullet) \to z^{i}(Z, \bullet)$,
and a commutative diagram of associated hypercohomology groups
\begin{equation}\label{eqn:MCCS-7}
\begin{array}{c}
\xymatrix@R=1pc{ 
\Hom_{\DM_k}(M_{Z'}(X'), \Z(i)[j]) \ar[d]_{v^*} \ar[r]^-{\alpha_{Z'}} &
\CH^{i-q}(Z', 2i-j) \ar[d]^{u^*} \\
\Hom_{\DM_k}(M_{Z}(X), \Z(i)[j]) \ar[r]^-{\alpha_{Z}} &
\CH^{i-q}(Z, 2i-j)}
\end{array}
\end{equation}
in which the horizontal arrows are isomorphisms.
\end{lem}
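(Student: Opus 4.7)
The plan is to read off both the maps and the commutativity from the chain of diagrams \eqref{eqn:MCCS-1}--\eqref{eqn:MCCS-5} assembled above, and then invoke the comparison theorem of Friedlander--Voevodsky to identify the hypercohomology of the equidimensional cycle complexes with motivic cohomology with supports.

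First, I would construct the map $u^* \colon z^i(Z',\bullet) \to z^i(Z,\bullet)$. Since $X \subset X'$ is an effective Cartier divisor that does not contain $Z'$ and meets $X'\setminus Z'$ nontrivially, Bloch's localization theorem together with \cite[Theorem~1.10]{Krishna-Levine} implies that both rows of \eqref{eqn:MCCS-1} are distinguished triangles and the middle and right vertical arrows are quasi-isomorphisms. Passing to the small Zariski site of $X'$ and sheafifying, the diagrams \eqref{eqn:MCCS-3}--\eqref{eqn:MCCS-4} yield (in the derived category of Zariski sheaves of complexes of abelian groups) a well-defined morphism $v^* \colon C_*z_{equi}(-,i)|_{X'} \to C_*z_{equi}(-,i-1)|_X$ lifting the usual intersection with $X$. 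Restricting the two distinguished triangles arising from \eqref{eqn:MCCS-4} to the complements $X'\setminus Z'$ and $X\setminus Z$ and invoking the octahedral axiom produces the desired induced arrow $u^* \colon C_*z_{equi}(-,i)|_{Z'} \to C_*z_{equi}(-,i-1)|_{Z}$, and the commutativity of \eqref{eqn:MCCS-5} is automatic from the construction.

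Next, I would produce the map of motives $v \colon M_Z(X) \to M_{Z'}(X')$. The Cartesian square \eqref{eqn:MCCS-0} gives a morphism of distinguished triangles defining $M_Z(X)$ and $M_{Z'}(X')$ in $\DM_k$, namely the right-hand square of \eqref{eqn:MCCS-6} together with the localization triangle $M(X\setminus Z)\to M(X)\to M_Z(X)$ and its analogue for $(X',Z')$. Functoriality of the cone (which is canonical here because the square commutes already at the level of motives of smooth schemes) yields $v$ and makes the left square of \eqref{eqn:MCCS-4} commute on hypercohomology groups after we make the identifications below.

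With these maps in hand, the identifications $\alpha_Z$ and $\alpha_{Z'}$ come from the Friedlander--Voevodsky comparison \cite[Proposition~4.2.9, Theorem~4.3.7]{Voev-1} (cf.\ \cite[Theorem~19.11]{MVW}), which provides, for any $W \in \Sm_k$, a natural isomorphism between the Zariski hypercohomology of $C_*z_{equi}(-,i)|_W$ and $\Hom_{\DM_k}(M(W),\Z(i)[j])$ after the appropriate reindexing. Applying this to the distinguished triangles of complexes of Zariski sheaves obtained by restricting \eqref{eqn:MCCS-4} to $X,X'$ and to their complements of $Z,Z'$, we see that the hypercohomology of $C_*z_{equi}(-,i)|_{Z'}$ (resp.\ $C_*z_{equi}(-,i)|_{Z}$) computes $\Hom_{\DM_k}(M_{Z'}(X'),\Z(i)[j])$ (resp.\ $\Hom_{\DM_k}(M_Z(X),\Z(i)[j])$), and translates into the higher Chow groups $\CH^{i-q}(Z',2i-j)$ and $\CH^{i-q}(Z,2i-j)$ by the usual shift accounting for the codimension $q$ of $Z \subset X$ (and the fact that restriction from $C_*z_{equi}(-,i)$ to a codimension-$q$ closed subscheme lowers the equidimensionality parameter, matching the columns of \eqref{eqn:MCCS-5}). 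This gives isomorphisms $\alpha_Z$ and $\alpha_{Z'}$.

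Finally, commutativity of \eqref{eqn:MCCS-7} is precisely the content of \eqref{eqn:MCCS-5} after we identify each corner via the comparisons above. The main subtle step, and the one I would spend the most care on, is the derived-category construction of $v^*$ on the sheafified complexes: one has to check that the quasi-isomorphism $C_*z_{equi}(-,i) \hookrightarrow z^{d-i}(-,\bullet)_{\zar}$ of \cite[Proposition~4.2.9]{Voev-1} really does interpolate between the equidimensional theory and Bloch's theory in a manner compatible with Gysin-style intersection with the divisor $X$, and that the resulting $u^*$ is independent of the auxiliary choices. Everything else is a diagram chase built on top of this.
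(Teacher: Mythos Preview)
Your proposal is correct and follows essentially the same route as the paper: the lemma is really a summary of the discussion surrounding \eqref{eqn:MCCS-1}--\eqref{eqn:MCCS-6}, and you have reconstructed that discussion faithfully. One small terminological point: the arrow $u^*$ on the left of \eqref{eqn:MCCS-4} is produced by the axiom on completing a morphism of distinguished triangles (TR3), not the octahedral axiom; and your remark about ``lowering the equidimensionality parameter'' is a bit loose---the shift by $q$ in the Chow groups really comes from identifying the homotopy fiber in \eqref{eqn:MCCS-4} with Bloch's cycle complex of $Z'$ via \eqref{eqn:MCCS-1} and \eqref{eqn:MCCS-3}, not from any restriction of equidimensional cycles to $Z'$ itself.
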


\subsection{Relation between Chow group and motivic cohomology}
\label{sec:Relation}
Let $X$ be a smooth quasi-projective scheme of dimension $d \ge 0$ over 
$k$ and let $D \subset X$ be an effective Cartier divisor.
We let $U = X \setminus D$. We let $S_X$ denote the double of $X$ along $D$
(see \S~\ref{sec:double}).
We have the inclusions $U \cong U_\pm \subset X_\pm \subset S_X$.

Let $q \ge 1$ be an integer and let $Z \subset X$ be an integral
cycle in $z^q(X|D, 0)$. The modulus condition implies that $Z \subset U$.
We consider the embeddings $Z \xrightarrow{i} U_+ =
S_X \setminus X_- \xrightarrow{j_+} S_X$, 
where the composite map is a closed immersion. 
We therefore have a sequence of maps

\begin{equation}\label{eqn:cycl-map}
\begin{array}{c}
\xymatrix@C=.6pc@R=1pc{
\CH^0(Z)_{\Lambda} \ar[d]^{\cong}_{\alpha^{-1}_{Z}}  
\ar@/_7pc/[dd]_(.25){\lambda_Z} & \\
\Hom_{\DML}(M_Z(U_+), \Lambda(q)[2q])  \ar[r]^{\cong} &
\Hom_{\DML}(M_Z(S_X), \Lambda(q)[2q]) \ar[d] \\
H^{2q}(S_X, \Lambda(q)) & \Hom_{\DML}(M(S_X), \Lambda(q)[2q]) \ar[l]_-{\cong}.}
\end{array}
\end{equation}

We let $\lambda_Z: \CH^0(Z)_{\Lambda} \to
H^{2q}(S_X, \Lambda(q))$ denote the composite of all arrows
in ~\eqref{eqn:cycl-map} and set $\lambda_{X|D}([Z]) = \Lambda_Z(1)$.
We extend it linearly to define a
group homomorphism $\lambda_{X|D}: z^q(X|D,0) \to H^{2q}(S_X, \Lambda(q))$.

\begin{lem}\label{lem:cycl-map*}
The map $\lambda_{X|D}$ induces a group homomorphism
\[
\lambda_{X|D}: \CH^q(X|D)_{\Lambda} \to H^{2q}(S_X, \Lambda(q)).
\]
\end{lem}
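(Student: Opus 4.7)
The plan is to verify that the cycle-level map $\lambda_{X|D}$ kills the image of the differential $\partial \colon z^q(X|D,1) \to z^q(X|D,0)$. Fix an integral $W \in \un{z}^q(X|D,1)$ with face evaluations $Z_j = \partial^j W \in z^q(X|D,0)$; one must show $\lambda_{X|D}(Z_0) = \lambda_{X|D}(Z_1)$ in $H^{2q}(S_X,\Lambda(q))$. The modulus condition at $n = 0$ forces $Z_j \cap D = \emptyset$, so $Z_j \subset U \cong U_+$ lies in the smooth open subset of $S_X$. The strategy is to produce a single class $\widetilde\Theta_W \in H^{2q}(S_X \times \A^1, \Lambda(q))$ whose two face pullbacks realize $\lambda_{X|D}(Z_0)$ and $\lambda_{X|D}(Z_1)$, and then to appeal to the $\A^1$-homotopy invariance of motivic cohomology of $S_X$ (which holds generally, as $H^*(-,\Lambda(*))$ is defined via $cdh$-hypercohomology of the $\A^1$-invariant complex $C_\ast z_{equi}(\A^q_k,0)_{cdh}$).

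The cycle class on the smooth ambient is constructed by applying \lemref{lem:Chow-mot-comp} to the Cartesian square with rows $Z_j \subset |W|$ and $X \times \{j\} \subset X \times \square^1$. This yields a canonical class $\Theta_W \in \Hom_{\DM_k}(M_{|W|}(X \times \square^1), \Lambda(q)[2q])$ whose face pullbacks equal the cycle classes $\Theta_{Z_j} \in \Hom_{\DM_k}(M_{Z_j}(X), \Lambda(q)[2q])$. Since $Z_j \subset U_+$, excision identifies the latter with $\Hom_{\DM_{cdh}}(M_{Z_j}(S_X), \Lambda(q)[2q])$, and the composition with the support-forgetting map $\Hom_{\DM_{cdh}}(M_{Z_j}(S_X), \Lambda(q)[2q]) \to H^{2q}(S_X,\Lambda(q))$ is exactly $\lambda_{X|D}(Z_j)$ by construction~\eqref{eqn:cycl-map}.

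To lift to $S_X \times \A^1$, use the $cdh$-descent associated to the cover $X_+ \sqcup X_- \to S_X$ (with intersection $D$), base-changed by $\A^1$. The Mayer-Vietoris exact sequence
\[
H^{2q-1}(D \times \A^1, \Lambda(q)) \to H^{2q}(S_X \times \A^1, \Lambda(q)) \to H^{2q}(X \times \A^1, \Lambda(q))^{\oplus 2} \to H^{2q}(D \times \A^1, \Lambda(q))
\]
sends the pair $(\Theta_W, 0)$ to $(\iota \times \id)^*\Theta_W$; by $\A^1$-homotopy invariance on $D$, this class corresponds to $\iota^*\Theta_{Z_0} \in H^{2q}(D, \Lambda(q))$, which vanishes since $Z_0$ is disjoint from $D$. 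Hence $(\Theta_W, 0)$ admits a lift $\widetilde\Theta_W \in H^{2q}(S_X \times \A^1, \Lambda(q))$. Since $i_0^* = i_1^*$ on $H^{2q}(S_X \times \A^1, \Lambda(q))$ by $\A^1$-invariance, and since $i_j^*\widetilde\Theta_W$ and $\lambda_{X|D}(Z_j)$ are both lifts of $([Z_j], 0)$ under the cdh-descent for $S_X$ (by naturality of the Mayer-Vietoris sequence in the faces), the desired equality would follow.

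The main obstacle is the last identification: naive Mayer-Vietoris lifts are only well-defined modulo the boundary image of $H^{2q-1}(D, \Lambda(q))$, so a cohomology-level argument only gives $\lambda_{X|D}(Z_0) - \lambda_{X|D}(Z_1) \in \partial H^{2q-1}(D, \Lambda(q))$. To remove this ambiguity one must work at the chain level: lift $\Theta_W$ to an explicit representative in $C_\ast z_{equi}(\A^q_k, 0)(X \times \square^1)$, carry out the $cdh$-descent and the $\A^1$-contracting chain homotopy at the level of complexes, and thereby produce an explicit chain homotopy between the $0$-cocycles representing $\lambda_{X|D}(Z_0)$ and $\lambda_{X|D}(Z_1)$ in the $cdh$-resolution of $S_X$. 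This bookkeeping at the level of cycle complexes is where the real work lies.
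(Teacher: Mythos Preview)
Your overall strategy is the same as the paper's --- produce a class on $S_X \times \A^1$ whose two face restrictions recover $\lambda_{X|D}(Z_0)$ and $\lambda_{X|D}(Z_1)$, then use $\A^1$-invariance --- but the step where you lift $(\Theta_W,0)$ through the Mayer--Vietoris sequence is exactly where the argument breaks, and you acknowledge this yourself. The lift $\widetilde\Theta_W$ is only well-defined modulo $\partial H^{2q-1}(D\times\A^1,\Lambda(q))$, and there is no reason the particular lift you pick restricts to the particular lifts $\lambda_{X|D}(Z_j)$ at the two faces; all you can conclude is $\lambda_{X|D}(Z_0)-\lambda_{X|D}(Z_1)\in\partial H^{2q-1}(D,\Lambda(q))$. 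Saying that the fix is ``chain-level bookkeeping'' is not a proof: carrying this out would amount to constructing a functorial cycle-class map on the singular scheme $\A^1_{S_X}$ compatible with face restrictions, which is precisely the content of the lemma.

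The paper avoids the lifting problem entirely with one observation you are missing: the double construction commutes with base change by $\A^1$, so $S(\A^1_X,\A^1_D)\cong \A^1_{S_X}$ canonically (\cite[Proposition~2.3]{BK}). Hence the map $\lambda_{\A^1_X|\A^1_D}:z^q(\A^1_X|\A^1_D,0)\to H^{2q}(\A^1_{S_X},\Lambda(q))$ is already defined by the very recipe~\eqref{eqn:cycl-map}, with no choices involved, and any $W\in z^q(X|D,1)$ tautologically lies in $z^q(\A^1_X|\A^1_D,0)$. The problem then reduces to checking that the square
\[
\xymatrix@R=1pc{
z^q(X|D,1) \ar@{^{(}->}[r] \ar[d]_{\partial^*_1-\partial^*_0} &
z^q(\A^1_X|\A^1_D,0) \ar[r]^-{\lambda_{\A^1_X|\A^1_D}} &
H^{2q}(\A^1_{S_X},\Lambda(q)) \ar[d]^{\iota^*_1-\iota^*_0} \\
z^q(X|D,0) \ar[rr]^-{\lambda_{X|D}} & & H^{2q}(S_X,\Lambda(q))
}
\]
commutes. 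The right vertical arrow vanishes by $\A^1$-invariance, and the commutativity is checked face by face: for $t=0,1$ one unwinds the definition of $\lambda$ on both rows and is left with the compatibility of the isomorphism $\alpha_Z$ under pullback along a transverse divisor, which is exactly \lemref{lem:Chow-mot-comp}. Your use of that lemma was correct; the missing idea was to replace the ad hoc Mayer--Vietoris lift by the already-available map $\lambda_{\A^1_X|\A^1_D}$.
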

\begin{proof}
To prove the lemma, we consider the diagram
\begin{equation}\label{eqn:cycl-map-0}
\begin{array}{c}
\xymatrix@R=1pc{
z^q(X|D, 1) \ar@{^{(}->}[r] \ar[d]_{\partial^*_1 - \partial^*_0} &
z^q(\A^1_X|\A^1_D,0) \ar[r]_-{\lambda_{{\A^1_X}|{\A^1_D}}} &
H^{2q}(S_{\A^1_X}, \Lambda(q)) \ar[d]^{\iota^*_1 - \iota^*_0} \\
z^q(X|D, 0) \ar[rr]_-{\lambda_{X|D}} & &
H^{2q}(S_{X}, \Lambda(q)).}
\end{array}
\end{equation}

To show that $\Lambda_{X|D}$ kills the 
subgroup of cycles rationally equivalent to zero is equivalent to showing
that $\lambda_{X|D} \circ (\partial^*_1 - \partial^*_0) = 0$.
It follows from \cite[Proposition~2.3]{BK} that $S_{\A^1_X}$ is canonically
isomorphic to $\A^1_{S_X}$ and the right vertical arrow in
~\eqref{eqn:cycl-map-0} is induced by the inclusion
$\iota_t: S_X \inj \A^1_{S_X}$ for $t = 0,1$. It follows from the homotopy
invariance of the motivic cohomology that the right vertical arrow in
~\eqref{eqn:cycl-map-0} is zero. Our assertion will therefore follow
if we show that ~\eqref{eqn:cycl-map-0} commutes.

Let $W \in z^q(X|D, 1)$ be an integral cycle which intersects
the faces of $\A^1_X$ properly and whose closure in $\P^1_X$ satisfies 
the modulus $D$. To show the commutativity of ~\eqref{eqn:cycl-map-0}, it 
suffices to show that the diagram
\begin{equation}\label{eqn:cycl-map-1}
\begin{array}{c}
\xymatrix@C.8pc@R=1pc{
& \stackrel{r_t}{\underset{i =1}\oplus} \CH^0(Z_i)_{\Lambda} \ar[d]^{\cong} \\
\CH^0(W)_{\Lambda} \ar[d]_{\cong}  \ar[r]^-{\iota^*_t} &
\CH^0(W_t)_{\Lambda}  \ar[d]^{\cong}  \\
\Hom_{\DML}(M_W(\A^1_{U_+}), \Lambda(q)[2q])  \ar[d]_{\cong} \ar[r]^-{\iota^*_t} &
\Hom_{\DML}(M_{W_t}(U_+), \Lambda(q)[2q])  \ar[d]^{\cong} \\
\Hom_{\DML}(M_W(\A^1_{S_X}), \Lambda(q)[2q])  \ar[d] \ar[r]^-{\iota^*_t} &
\Hom_{\DML}(M_{W_t}(S_X), \Lambda(q)[2q])  \ar[d] \\
\Hom_{\DML}(M(\A^1_{S_X}), \Lambda(q)[2q])  \ar[d]_{\cong} \ar[r]^-{\iota^*_t} & 
\Hom_{\DML}(M(S_X), \Lambda(q)[2q])  \ar[d]^{\cong} \\
H^{2q}(\A^1_{S_X}, \Lambda(q)) \ar[r]^-{\iota^*_t} & 
H^{2q}(S_X, \Lambda(q))}
\end{array}
\end{equation}  
commutes for $t = 0,1$, where
$\{Z_1, \cdots , Z_{r_t}\}$ are the irreducible components of $W_t =
\iota^*_t(W)$. Note that $\iota^*_t(1)$ is the cycle class of
$W_t = \partial^*_t([W])$ in $\CH^0(W_t)_{\Lambda}  \cong \Lambda^{r_t}$.

The bottom three squares commute by the commutativity of the 
diagram
\begin{equation}\label{eqn:cycl-map-2}
\begin{array}{c}
\xymatrix@C.8pc@R=1pc{
M(S_X) \ar[r] \ar[d]_{\iota_t} & M_{W_t}(S_X)  \ar[d]^{\iota_t} & 
M_{W_t}(U_+) \ar[d]^{\iota_t} \ar[l]_-{\cong} \\
M(\A^1_{S_X}) \ar[r] & M_W(\A^1_{S_X}) & M_W(\A^1_{U_+}) \ar[l]_-{\cong}.}
\end{array}
\end{equation}

The top square commutes by \lemref{lem:Chow-mot-comp}. We conclude that
~\eqref{eqn:cycl-map-1} commutes and this completes the proof.
\end{proof}

Our main result on the relation between cycles with modulus and 
relative motivic cohomology is the following.

\begin{thm}\label{thm:Chow-mot-main}
Let $k$ be a perfect field.  
Let $X$ be a smooth quasi-projective scheme of dimension $d \ge 0$ over $k$
and let $D \subset X$ be an effective Cartier divisor. Then the following hold 
for every integer $q \ge 0$. 
\begin{enumerate}
\item
The map $\lambda_{X|D}: z^q(X|D, 0) \to H^{2q}(S_X, \Lambda(q))$
induces a map
\begin{equation}\label{eqn:cycl-map-3}
\lambda_{X|D}: \CH^q(X|D)_\Lambda \to H^{2q}(X|D, \Lambda(q)).
\end{equation}
\item
If $D \in \Sm_k$, then the inclusion $z^q(X|D, \bullet) \inj
z^q(X, \bullet)$ induces, for all $i \ge 0$, a map
\begin{equation}\label{eqn:cycl-map-3-high}
\lambda_{X|D}: \CH^q(X|D, i) \to H^{2q-i}(X|D, \Z(q)).
\end{equation}
\end{enumerate}
\end{thm}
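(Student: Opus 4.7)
For part (1), we already have from \lemref{lem:cycl-map*} the map $\lambda_{X|D}\colon \CH^q(X|D)_\Lambda \to H^{2q}(S_X, \Lambda(q))$, and the plan is to factor it through the canonical injection $H^{2q}(X|D, \Lambda(q)) \inj H^{2q}(S_X, \Lambda(q))$ supplied by the double construction. By the split short exact sequence \eqref{eqn:nabla*} and the identity $\nabla \circ \iota_- = \id$, there is a canonical decomposition $H^{2q}(S_X, \Lambda(q)) \cong H^{2q}(X|D, \Lambda(q)) \oplus H^{2q}(X_-, \Lambda(q))$ in which the first summand is realized as $\ker(\iota_-^*)$. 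It therefore suffices to show $\iota_-^* \circ \lambda_{X|D} = 0$.

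For an integral $Z \in z^q(X|D, 0)$, the modulus condition (with $F^{\infty}_0 = \emptyset$) forces $Z \cap D = \emptyset$, so $Z \subset U_+ = S_X \setminus X_-$. Unwinding \eqref{eqn:cycl-map}, the class $\lambda_{X|D}([Z])$ is the image of $1 \in \CH^0(Z)_{\Lambda} \cong \Hom_{\DML}(M_Z(S_X), \Lambda(q)[2q])$ under the canonical map $M(S_X) \to M_Z(S_X)$, and $\iota_-^*$ corresponds to precomposition with $\iota_-\colon M(X_-) \to M(S_X)$. Excision for the open immersion $U_+ \inj S_X$ yields $M_Z(U_+) \cong M_Z(S_X)$, and the composite $M(X_-) \to M(S_X) \to M_Z(S_X) \cong M_Z(U_+)$ factors through $M_{Z \cap X_-}(X_-) = 0$ since $Z \cap X_- = \emptyset$. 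This gives $\iota_-^*(\lambda_{X|D}([Z])) = 0$, proving (1).

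For part (2), the plan is to use \thmref{thm:sm-pair}(2), which for the smooth pair $(X, D)$ identifies $\CH^q_{\sM}(X|D, i) \cong H^{2q-i}(X|D, \Z(q))$, where $\CH^q_{\sM}(X|D, i) = H_i(z^q_{\sM}(X|D, \bullet))$ and $z^q_{\sM}(X|D, \bullet) = \Cone(f^*\colon z^q_D(X, \bullet) \to z^q(D, \bullet))[-1]$. Thus it suffices to construct a natural chain map $z^q(X|D, \bullet) \to z^q_{\sM}(X|D, \bullet)$. The modulus condition on $W \in z^q(X|D, n)$ forces $W \cap (D \times \square^n) = \emptyset$: any irreducible component of $\ov{W} \cap (D \times \ov{\square}^n)$ meeting $X \times \square^n$ would yield a divisor on $\ov{W}^N$ on which $\nu^*(D \times \ov{\square}^n)$ has positive multiplicity while $\nu^*(X \times F^{\infty}_n)$ has zero multiplicity (as $\square^n \cap F^{\infty}_n = \emptyset$), contradicting the modulus inequality. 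Consequently $W \in z^q_D(X, n)$ and $f^*(W) = 0$ in $z^q(D, n)$. The vanishing of the composite $z^q(X|D, \bullet) \to z^q_D(X, \bullet) \xrightarrow{f^*} z^q(D, \bullet)$ promotes the inclusion into a canonical chain map $z^q(X|D, \bullet) \to \Cone(f^*)[-1] = z^q_{\sM}(X|D, \bullet)$. Passing to $H^{-i}$ and composing with the isomorphism of \thmref{thm:sm-pair}(2) yields the desired map.

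The main obstacle in both parts is geometric: establishing that the modulus condition forces the relevant cycle-theoretic disjointness (either $Z \cap D = \emptyset$ in (1), or $W \cap (D \times \square^n) = \emptyset$ in (2)). Once these disjointness statements are secured via the normalization argument, part (1) reduces to excision in $\DML$ together with the double-construction decomposition of $H^{2q}(S_X, \Lambda(q))$, while part (2) reduces to a purely formal mapping-cone construction at the chain level combined with \thmref{thm:sm-pair}(2).
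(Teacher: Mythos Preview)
Your proof is correct and follows essentially the same approach as the paper: for (1) you show $\iota_-^* \circ \lambda_{X|D} = 0$ via $M_{Z\cap X_-}(X_-) = 0$, exactly as the paper does using the commutative square \eqref{eqn:cycl-map-5}; for (2) you observe that the composite $z^q(X|D,\bullet) \to z^q_D(X,\bullet) \to z^q(D,\bullet)$ vanishes and factor through the cone, which is precisely the paper's argument. Your normalization argument making explicit why the modulus condition forces $W \cap (D\times\square^n)=\emptyset$ is a detail the paper leaves to the reader.
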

\begin{proof}
We have a commutative diagram with exact bottom row
\eqref{eqn:nabla*}, \eqref{prop:rel-mot-coh-equid}:
\begin{equation}\label{eqn:cycl-map-4}
\begin{array}{c}
\xymatrix@C.8pc{
& & \CH^q(X|D)_\Lambda \ar[d]^{\lambda_{X|D}} \ar@{.>}[dl] & & \\
0 \ar[r] & H^{2q}(X|D, \Lambda(q)) \ar[r]^-{\nu_{+,*}} & 
H^{2q}(S_X, \Lambda(q)) \ar[r]^-{\iota^*_-} & H^{2q}(X, \Lambda(q)) \ar[r] & 0.}
\end{array}
\end{equation}

It is therefore enough to show that $\iota^*_- \circ \lambda_{X|D} = 0$.
But this is clear from the construction of $\lambda_{X|D}$ in
~\eqref{eqn:cycl-map} because $M_Z(X_-) = 0$ and the diagram
\begin{equation}\label{eqn:cycl-map-5}
\begin{array}{c}
\xymatrix@C.8pc@R=1pc{
M(X_-) \ar[r]^-{\iota_-} \ar[d] & M(S_X) \ar[d] & M(U_+) \ar[d] 
\ar[l] \\
M_Z(X_-) \ar[r]^-{\iota_-}  & M_Z(S_X) & M_Z(U_+) \ar[l]_-{\cong}}
\end{array}
\end{equation}
commutes for any integral cycle $Z \in z^q(X|D, 0)$.  
This proves (1).

We now prove (2).  Using \thmref{thm:sm-pair},
it is enough to construct a map
$\CH^q(X|D, i) \to \CH^q_{\sM}(X|D, i)$.
But this follows from the observation that 
\[
z^q_{\sM}(X|D, \bullet) \to z^q_D(X, \bullet) \to  z^q(D, \bullet) \to
z^q_{\sM}(X|D, \bullet)[1]
\]
is a distinguished triangle in the derived category of abelian groups
and the composite map $z^q(X|D, \bullet) \to z^q_D(X, \bullet) \to  
z^q(D, \bullet)$ is zero. Hence, the inclusion
$z^q(X|D, \bullet) \inj z^q_D(X, \bullet)$ factors through a
map $\lambda_{X|D}: z^q(X|D, \bullet) \to  z^q_{\sM}(X|D, \bullet)$ in the
derived category. In particular, it induces the desired map
between the homology groups.
\end{proof}

\subsection{Cycle class map for Chow groups with modulus}
\label{sec:CCMM*}
If the relative $K$-theory is to be described by higher Chow
groups with modulus, as conjectured, then there must exist a cycle
class map from the higher 0-cycles with modulus to the relative 
higher $K$-groups. As a consequence of \corref{cor:Conn-KGL-sm},
\thmref{thm:Chow-mot-main}, and the weak equivalence of spectra  
$K(X,D) \to KH(X,D)$, it follows immediately that this is indeed the case
if $D \subset X$ is a smooth divisor.

\begin{cor}\label{cor:cyle-class-KB}
Let $k$ be a perfect field and let $D \subset X$ be an inclusion of
a smooth divisor in $\Sm_k$. Then there exists a cycle class map
for $i \ge 0$:
\[
cyc_i: \CH^{d+i}(X|D, i) \to K_i(X,D).
\]
\end{cor}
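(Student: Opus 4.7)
The plan is to simply compose two maps already constructed in the paper, so the proposal is essentially a book-keeping exercise in choosing the right indices and invoking the smooth-pair $K \simeq KH$ equivalence.

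First, I would apply \thmref{thm:Chow-mot-main}(2) with $q = d+i$. Since $D \subset X$ is a smooth divisor in $\Sm_k$, that theorem produces a homomorphism
\[
\lambda_{X|D}: \CH^{d+i}(X|D, i) \to H^{2(d+i)-i}(X|D, \Z(d+i)) = H^{2d+i}(X|D, \Z(d+i)).
\]
Next, I would apply \corref{cor:Conn-KGL-sm} to the closed immersion $f: D \inj X$ of smooth schemes, which yields the integral cycle class map
\[
cyc_i: H^{2d+i}(X|D, \Z(d+i)) \to K_i(X,D),
\]
since under our smoothness hypotheses the proofs of \lemref{lem:vanish-coh} and \thmref{thm:Conn-KGL} go through without inverting $p$ or assuming resolution of singularities. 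Note here that $K_i(X,D) \cong KH_i(X,D)$ since $X$ and $D$ are both smooth, which is exactly the identification used in \secref{sec:Rel-sm} and lets us transport the $KH$-valued cycle class map of \thmref{thm:Conn-KGL} to a $K$-theoretic one.

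The desired map is then simply the composition
\[
cyc_i : \CH^{d+i}(X|D, i) \xrightarrow{\lambda_{X|D}} H^{2d+i}(X|D, \Z(d+i)) \xrightarrow{cyc_i} K_i(X,D).
\]
There is no real obstacle here; the only subtlety worth flagging is making sure the indices line up so that \thmref{thm:Chow-mot-main}(2) lands precisely in the weight and degree required by \corref{cor:Conn-KGL-sm} (namely weight $d+i$ and cohomological degree $2d+i$), and noting that the $K = KH$ identification for smooth pairs allows the use of \corref{cor:Conn-KGL-sm} in the desired form.
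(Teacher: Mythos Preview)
Your proposal is correct and is exactly the argument the paper gives: compose $\lambda_{X|D}$ from \thmref{thm:Chow-mot-main}(2) with the integral cycle class map of \corref{cor:Conn-KGL-sm}, using the weak equivalence $K(X,D)\to KH(X,D)$ for the smooth pair. The index check you made ($q=d+i$ lands in degree $2d+i$ and weight $d+i$) is precisely what is needed.
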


A cycle class map of the kind given in \corref{cor:cyle-class-KB} was
constructed in \cite{Binda}. In that construction, Binda uses
a different definition for the Chow groups with modulus compared to the
ones described in \S~\ref{sec:HCGM}. His modulus condition is stronger and this
allows him to prove the result when $D$ is a simple normal crossing divisor.
A very general cycle class map for our definition of the Chow groups with 
modulus is constructed in \cite{GK} 
(where $D$ is allowed to be any Cartier divisor). However, this construction
exists only in the pro-setting where we need to consider the 
pro-abelian groups $\{\CH^{d+i}(X|mD, i)\}_{m \ge 1}$. 
The main point of the new result \corref{cor:cyle-class-KB} is that it
shows that for smooth pairs, we do not need to go to the pro-setting.


\section{The isomorphism theorem}\label{sec:Iso}
The goal of this section is to show that the comparison map
$\lambda_{X|D}$ of \thmref{thm:Chow-mot-main} is an isomorphism for
0-cycles on affine schemes. 
Our strategy for showing this is to use the doubling trick once again
and combine this with \thmref{thm:BS-main}. 
Other crucial ingredient is the Roitman torsion
theorem of \cite{Krishna-2}.

Throughout this section, we fix an algebraically closed field $k$. We
also fix a smooth quasi-projective scheme $X$ of dimension $d$ over $k$.
We let $D \subset X$ be a smooth divisor. Let $S_X = S(X,D)$ denote the
double of $X$ along $D$. We let $U = X \setminus D$ and $S_U = \nabla^{-1}(U)
= U_+ \amalg U_-$, where $\nabla: S_X \to X$ is the fold map.  

We first construct a comparison map for $S_X$.
Let $x \in S_U$ be a closed point.
We let $S = \Spec(k(x))$. Since $\iota_S: S \inj S_X$ is a local complete 
intersection (lci), it follows from \cite[Definition~2.32, Theorem~2.33]{Nav}
that there is a push-forward map $\iota_{S,*}: \Z \xrightarrow{\cong} 
H^0(S, \Lambda(0)) \to H^{2d}(S_X, \Z(d))$.
Since $(S_U, S)$ is a smooth pair, the map $\iota_*$ is induced by the
maps of motives 
\begin{equation}\label{eqn:Chow-mot*-1}
M(S_X) \to M_{S}(S_X) \xleftarrow{\cong} M_S(S_U) \xrightarrow{\cong} 
M(S)(d)[2d].
\end{equation} 

We let $\gamma_{S_X}([x]) = \iota_{S,*}(1) \in H^{2d}(S_X, \Z(d))$ and extend
this construction linearly to get a map
\begin{equation}\label{eqn:Comp-gen}
\gamma_{S_X}: \sZ_0(S_X) \to H^{2d}(S_X, \Z(d)).
\end{equation}

\begin{lem}\label{lem:Iso-section}
The map $\gamma_{S_X}$ descends to a group homomorphism
$\gamma_{S_X}:\CH_0(S_X) \to H^{2d}(S_X, \Z(d))$ such that the diagram
\begin{equation}\label{eqn:Chow-mot*-0}
\begin{array}{c}
\xymatrix@C=1pc@R=1pc{
\CH_0(S_X) \ar[r]^-{cyc_{S_X}} \ar[d]_{\gamma_{S_X}} & K_0(S_X)  
\ar[d]^{\kappa_{S_X}} \\
H^{2d}(S_X, \Z(d)) \ar[r]^-{cyc_{S_X}} & KH_0(S_X)}
\end{array}
\end{equation}
is commutative. 
\end{lem}
\begin{proof}
By the moving lemma, we can write $\CH_0(X) = {\sZ_0(U)}/{\sR_0(X/U)}$,
where $\sR_0(X/U) = \sZ_0(U) \cap \sR_0(X) \subset \sZ_0(X)$. 
We then have a diagram of split exact sequences
\begin{equation}\label{eqn:Iso-section-0}
\xymatrix@C.8pc{
0 \ar[r] &
\sZ_0(X|D) \ar[r]^-{p_{+ *}} \ar@{->>}[d] & \sZ_0(S_X) \ar[r]^{\iota^*_-} 
\ar@{->>}[d] &
\sZ_0(U) \ar[r] \ar@{->>}[d] & 0 \\
0 \ar[r] &  \CH_0(X|D) \ar[r]^-{p_{+ *}} \ar[d]_{\lambda_{X|D}} & 
\CH_0(S_X) \ar[r]^{\iota^*_-} \ar@{-->}[d]^{\gamma_{S_X}} &
\CH_0(X) \ar[r] \ar[d]^{\lambda_X} & 0 \\
0 \ar[r] & H^{2d}(X|D, \Z(d)) \ar[r]^-{p_{+ *}} &
H^{2d}(S_X, \Z(d)) \ar[r]^-{\iota^*_-} &  H^{2d}(X, \Z(d)) \ar[r]  & 0.}
\end{equation}

It is immediate from the definition of
$\lambda_{X|D}$ in ~\eqref{eqn:cycl-map} and
$\gamma_{S_X}$ in ~\eqref{eqn:Comp-gen}
that all squares in the outer diagram (ignoring the
middle row) in ~\eqref{eqn:Iso-section-0} commute. 
Let $\alpha \in \sZ_0(S_X)$ be such that it dies in $\CH_0(S_X)$.
We can uniquely write $\alpha = p_{+ *}(\alpha_1) + \nabla^*(\alpha_2)$.
Since $\iota^*_- \circ \nabla^*$ is identity, we must have that
$\alpha_1 \in \sR_0(X|D)$ and $\alpha_2 \in \sR_0(X/U) \subset \sZ_0(U)$.
But then $\alpha_1$ must die in $H^{2d}(X|D, \Z(d))$ by 
\thmref{thm:Chow-mot-main}(2), and hence it must die in
$H^{2d}(S_X, \Z(d))$. Similarly, $\alpha_2$ must die in
$H^{2d}(X, \Z(d))$. In particular, we must have 
$\gamma_{S_X}(\nabla^*(\alpha_2)) = 0$ in $H^{2d}(S_X, \Z(d))$.
We conclude that $\gamma_{S_X}(\alpha) =
\gamma_{S_X}(p_{+ *}(\alpha_1)) + \gamma_{S_X}(\nabla^*(\alpha_2)) = 0$.
This proves the first part.

To show that ~\eqref{eqn:Chow-mot*-0} commutes, we choose 
a closed point $x \in S_U$, set $S = \Spec(k(x))$
and consider the diagram
\begin{equation}\label{eqn:MC-sing-1}
\xymatrix@C1pc{
\Z \ar[r]^-{\simeq} & \CH_0(S) \ar[rr]^-{\simeq} \ar[dr]  
\ar[dd] & & H^0(S, \Z(0)) \ar[dd] \ar[dr] & \\
& & K_0(S) \ar[rr] \ar[dd] & & KH_0(S) \ar[dd] \\ 
& \CH_0(S_X) \ar[rr]^-{\gamma_{S_X}} \ar[dr] &  & 
H^{2d}(S_X, \Z(d)) \ar[dr] & \\
& & K_0(S_X) \ar[rr] & & KH_0(S_X).}
\end{equation}


To show that the bottom face of this cube commutes,
it suffices to show that its all other faces commute. 
Now, the top and front face clearly commute
and the left face commutes by the definition of the cycle class map
(see \cite[Lemma~3.12]{BK}).
The back face commutes by definition of $\gamma_{S_X}$. To show that the
right face commutes, we can break it into a diagram
\begin{equation}\label{eqn:MC-sing-2}
\xymatrix@C1pc{
H^0(S, \Z(0)) \ar[r] \ar[d] & H^{2d}_S(S_X, \Z(d)) \ar[r] \ar[d] &
H^{2d}(S_X, \Z(d)) \ar[d] \\
KH_0(S) \ar[r] & KH^S_0(S_X) \ar[r] & KH_0(S_X).}
\end{equation}

The left square clearly commutes and the right square commutes by
\thmref{thm:KH-SS*} and \thmref{thm:sm-pair}. The lemma is now proven.
\end{proof}

The final result of this paper is the following comparison theorem for
0-cycles.

\begin{thm}\label{thm:Comp-0-cycles}
Let $X$ be a smooth affine scheme of dimension $d \ge 1$ over an
algebraically closed field $k$ and let $D \subset X$ be a smooth
divisor. Then the map
\[
\lambda_{X|D} : \CH_0(X|D) \to H^{2d}(X|D, \Z(d))
\]
is an isomorphism.
\end{thm}
\begin{proof}
Since $(X,D)$ is a smooth pair, the double $S_X$ is a simple normal
crossing variety in the sense of \cite[\S~2.1]{EKW}. In particular,
it follows from \cite[Proposition~6.4]{EKW} that $\gamma_{S_X}$ is 
surjective. We remark here that the surjectivity of $\gamma _{V}$
is proven in the above cited work for an arbitrary simple
normal crossing variety $V$ if we work with $\Lambda$-coefficients
(see the end of \S~\ref{sec:review}). However, it is an elementary
checking that the proof yields this surjectivity with integral coefficients
if we let $V = S_X$. We conclude from ~\eqref{eqn:Iso-section-0}
that $\lambda_{X|D}$ is surjective. 

To show that $\lambda_{X|D}$ is injective, it suffices to show using 
~\eqref{eqn:Iso-section-0} that $\gamma_{S_X}$ is
injective. Using ~\eqref{eqn:Chow-mot*-0}, it suffices to show that
$\kappa_{S_X} \circ cyc_{S_X}$ is injective. By \cite[Corollary~6.8]{Krishna-2},
it suffices to show that $\kappa_{S_X}$ is injective. 

Since excision holds for the $K$-theory of affine schemes in degrees up to
zero (see \cite[Proposition~11.3]{BK}), and since it holds for
$KH$-theory in all degrees \cite{Cisinski}, 
there is a commutative diagram of exact sequences
\begin{equation}\label{eqn:MC-sing-3}
\xymatrix@C.8pc{
K_1(S^N_X) \ar[r] \ar[d] & K_1(D) \ar[r] \ar[d] & K_0(S_X) \ar[r] \ar[d] &
K_0(S^N_X) \ar[r] \ar[d] & K_0(D) \ar[d] \\
KH_1(S^N_X) \ar[r] & KH_1(D) \ar[r] & KH_0(S_X) \ar[r] &
KH_0(S^N_X) \ar[r]  & KH_0(D).} 
\end{equation}

Since $X$ and $D$ are regular, all vertical arrows except possibly the middle
one are isomorphisms. It follows that middle vertical arrow is also an
isomorphism. In other words, $\kappa_{S_X}$ is in fact an isomorphism. 
This finishes the proof.
\end{proof}

\begin{remk}\label{remk:Surj-gen*}
The proof of \thmref{thm:Comp-0-cycles} shows that $\lambda_{X|D}$ is
surjective even if $X$ is not affine. 
\end{remk}

\enlargethispage{60pt}

\end{document}